\newcommand{\rem}[1]{}
\newcommand{\bom}{\mbox{\boldmath$\omega$}}
\newcommand{\bxi}{\mbox{\boldmath$\xi$}}
\newcommand{\bu}{\mbox{\boldmath$u$}}
\newcommand{\bv}{\mbox{\boldmath$v$}}
\newcommand{\bx}{\mbox{\boldmath$x$}}
\newcommand{\bk}{\mbox{\boldmath$k$}}
\newcommand{\bel}{\begin{equation}\label}
\newcommand{\ee}{\end{equation}}
\newcommand{\beq}{\begin{eqnarray}\label} 
\newcommand{\eeq}{\end{eqnarray}} 
\newcommand{\bc}{\begin{center}} 
\newcommand{\ec}{\end{center}} 
\newcommand{\ben}{\begin{enumerate}}
\newcommand{\een}{\end{enumerate}}
\newcommand{\bit}{\begin{itemize}}
\newcommand{\eit}{\end{itemize}}
\newcommand\shalf{\ensuremath{{\scriptstyle\frac{1}{2}}}}
\newcommand\third{\ensuremath{{\scriptstyle\frac{1}{3}}}}
\newcommand\threehalves{\ensuremath{{\scriptstyle\frac{3}{2}}}}
\newcommand\threequarters{\ensuremath{{\scriptstyle\frac{3}{4}}}}
\newcommand\fivesixths{\ensuremath{{\scriptstyle\frac{5}{6}}}}
\newcommand\fivehalves{\ensuremath{{\scriptstyle\frac{5}{2}}}}
\newcommand\fivefourths{\ensuremath{{\scriptstyle\frac{5}{4}}}}
\newcommand\fifth{\ensuremath{{\scriptstyle\frac{1}{5}}}}
\newcommand{\I}{\int_{\mathbb{T}^3}}
\DeclareMathOperator*{\esssup}{ess\,sup}
\newtheorem{proposition}{Proposition}[]
\newtheorem{theorem}[proposition]{Theorem}
\newtheorem{corollary}{Corollary}
\newtheorem{lemma}{Lemma}
\theoremstyle{definition}
\newtheorem{remark}[proposition]{Remark}
\numberwithin{equation}{section}
\begin{document}

\begin{center}
\textbf{\large\color{blue}Phase transitions in the fractional three-dimensional Navier-Stokes equations}
\par\vspace{3mm}
Daniel W. Boutros\footnote{Corresponding author. Email: dwb42@cam.ac.uk}\\
Department of Applied Mathematics and Theoretical Physics,\\
University of Cambridge,\\
Cambridge CB3 0WA, UK
\par\vspace{2mm}
and 
\par\vspace{2mm}
John D. Gibbon\footnote{Email: j.d.gibbon@ic.ac.uk}\\
Department of Mathematics,\\
Imperial College London,\\
London SW7 2AZ, UK.
\end{center}

%\todo{Here is the result}
%[inline]

\begin{abstract}\noindent
The fractional Navier-Stokes equations on a periodic domain $[0,\,L]^{3}$ differ from their conventional counterpart by the replacement of the $-\nu\Delta\bu$ Laplacian term by $\nu_{s}A^{s}\bu$, where $A= - \Delta$ is the Stokes operator and $\nu_{s} = \nu L^{2(s-1)}$ is the viscosity parameter. Four critical values of the exponent $s\geq 0$ have been identified where functional properties of solutions of the fractional Navier-Stokes equations change. These values are\,: $s=\third$\,; $s=\threequarters$\,; $s=\fivesixths$ and $s=\fivefourths$. In particular: i) for $s > \third$ we prove an analogue of one of the Prodi-Serrin regularity criteria; ii) for $s \geq \threequarters$ we find an equation of local energy balance and; iii) for $s > \fivesixths$ we find an infinite hierarchy of weak solution time averages. The existence of our analogue of the Prodi-Serrin criterion for $s > \third$ suggests the sharpness of the construction using convex integration of H\"older continuous solutions with epochs of regularity in the range $0 < s < \third$.
\end{abstract}
\noindent \textbf{Keywords:} Fractional Navier-Stokes equations, regularity criteria, energy balance, phase transition, hypodissipation

\vspace{0.1cm} \noindent \textbf{Mathematics Subject Classification:} 35B33 (primary), 35B65, 35B60, 35B45, 35Q35, 35R11, 76D99 (secondary)
%%%%%%%%%%%%%%
\section{\large\color{blue}The fractional Navier-Stokes equations}

We consider the incompressible fractional Navier-Stokes equations in a form based on the Stokes operator $A = - \Delta$
\bel{nseh}
\left(\partial_{t} + \bu\cdot\nabla\right)\bu + \nu_{s}A^{s}\bu = - \nabla P\,,
\qquad\qquad s \geq 0\,,
\ee
together with $\mbox{div}\,\bu = 0$ and $\nu_{s} = \nu L^{2(s-1)}$, on a three-dimensional periodic domain $[0,\,L]^{3}$.  The fractional Laplacian $A^s$ has the spectral representation
\bel{Adef}
A^{s}\bu(\bx,t) \coloneqq \sum_{k \in \mathbb{Z}^3} |\bk|^{2s} \widehat{\bu}_{k}(t) \exp\left(i\bk\cdot\bx\right)\,,
\ee
where $\widehat{\bu}_k$ are the Fourier coefficients of $\bu$. Instead of keeping $s$ fixed at $s=1$ and then studying the inviscid $\nu \to 0$ limit in the conventional way, we keep $\nu$ fixed and study properties of solutions of (\ref{nseh}) in the limit $s\to 0$.  Inspired by the Lions result \cite[~Chapter 1, Remark 6.11]{JLL1969} (see also \cite[Section 8]{JLL1959}), which shows that solutions of (\ref{nseh}) are regular when $s \geq \fivefourths$ (see also Tao \cite{TT2009} and Luo and Titi \cite{Luo2018}), much work has concentrated on the hyper-viscous ($s > 1$) case  \cite{Frisch2008,Avrin2003,KN2002,Zh2004,BPFS1979,DS2012,CCM2020}. However, it is our view that the the hypo-viscous regime ($0 < s < 1$) is of equal if not greater interest\,: see \cite{Kiselev2008} for work on the fractional Burgers equation. In the limit $s \to 0$ the question arises whether there are significant changes to the properties of solutions of (\ref{nseh}) before reaching the limit of the damped Euler equations at $s=0$
\bel{damE1}
\left(\partial_{t} + \bu\cdot\nabla\right)\bu + \nu_{0}\bu = - \nabla P\,,\qquad \nu_{0} = \nu L^{-2}\,.
\ee 
Before summarizing and discussing our main results, it is worth remarking on the fact that the fractional Navier-Stokes equations bear a close relation to the fractional diffusion equation
\bel{fracDE}
\partial_{t}u + \nu_{s}A^{s}u = 0\,,
\ee
whose solutions are related to the theory of random walks. The language of Brownian motion, with its associated literature \cite{Man1968,Metzler2000,Metzler2004,Henry2009,Vaz2017}, has determined the nomenclature of the latter.  For $s=1$ the mean square displacement of a particle is linear with time\,: $\left<X^{2}\right> \sim t$. However, for the fractional diffusion equation\footnote{Somewhat confusingly, because of the $1/s$ exponent on $t$, the hyper-viscous case $s > 1$ corresponds to \textit{sub}-diffusion in the theory of random walks while the hypo-viscous case $s < 1$ corresponds to \textit{super}-diffusion.} the relation $\left<X^{2}\right> \sim t^{1/s}$ indicates anomalous diffusion when $s\neq 1$. The case $s > 1$ commonly occurs in biological, fractal and porous media  \cite{Proc1985,Sok2006,Saxton2007,Maini1999,Maini2011,Henry2006,Henry2017}, whereas the $s < 1$ case occurs in turbulent plasmas and polymer transport \cite{Cas2004,Cas2010}.  It is in this latter range where fat-tailed spectra and L\'evy flights are observed in data. 
\par\smallskip\noindent
A system is commonly considered to go through a phase transition when its properties undergo qualitative changes as a parameter passes through a critical value. The parameter in question is the exponent $s$ of the fractional Laplacian.  The fractional Navier-Stokes equations have many different kinds of solution whose properties may vary depending upon their regularity, their (non-)uniqueness, or the size of their singular set. We list some of them below\,:
\ben\itemsep -1mm
\item Wild solutions originally associated with the $3D$ Euler equations and Onsager's conjecture \cite{CET1994,Is2016,DSz2009}. 
\item Distributional solutions.
\item Suitable weak solutions which have partial regularity (Caffarelli, Kohn and Nirenberg \cite{CKN1982}).
\item Weak solutions of Leray-Hopf type.
\item Strong solutions which possess both existence and uniqueness. 
\een
Dependent on the setting, there may be some overlap among those listed above. Four critical values of $s$ have been identified\,: $s=\third$\,; $s=\threequarters$\,; $s=\fivesixths$ and $s=\fivefourths$. The changes to the qualitative properties of solutions at these points are summarised in \S\ref{summ}, together with references in the literature. These results lay the groundwork for future numerical simulations.

%%%%%%%%%%
\subsection{\small\color{blue}Notation and invariance properties}\label{not}

Throughout the paper the domain is taken to be the three-dimensional unit torus $\mathbb{T}^3$. For Sobolev norms of the solution we will use the following notation
\bel{Hnmdef}
H_{n,m} = \I |\nabla^{n}\bu|^{2m}dx \equiv \|\nabla^{n}\bu\|_{2m}^{2m}\,.
\ee
For example, the square of the standard $\dot{H}^1$-norm is expressed as $H_{1,1}$ and $n$-derivatives in $L^{2}$ are expressed as $H_{n,1}$. To avoid confusion we remark that the superscript $H^{n}$ refers to the Sobolev space whereas the subscripts $H_{n,m}$ refer to the norms defined in (\ref{Hnmdef}). Moreover fractional Sobolev norms for $m = 1$ are defined as follows
\bel{Hsdef}
\int_{\mathbb{T}^3}|(-\Delta)^{s/2}\bu|^{2}dx \equiv \I |A^{s/2} \bu|^{2}dx = H_{s,1}\,. 
\ee
Further properties of the fractional Laplacian can be found in Appendix \ref{fraclapappendix}.
\par\smallskip\noindent
We remark at this point that the $3D$ fractional Navier-Stokes equations are invariant under the scaling transformation
\bel{rescal1a}
\bx' = \lambda^{-1}\bx\,;\quad t' = \lambda^{-2s}t\,; \quad \bu = \lambda^{1-2s}\bu'\,,
\ee
which reduces to the standard Navier-Stokes scaling when $s=1$. It is also of interest to see how the properties of solutions across the hypo/hyper-viscous regimes are tied together through invariance properties, as in the standard Navier-Stokes equations  \cite{DG1995,FMRT,RRS,BV2022,FGT1981,DKGGPV,GDKGPV,Frisch1995,JDG2018,JDG2020} -- see \S\ref{Con}. The technical material in references
\cite{McC2013,Bre2019,Bah2011,Tao2006,Amann2000,Wu2003} has been used throughout the paper.

%%%%%%%%%%%%%%%%%%%%%%
\subsection{\small\color{blue}Leray-Hopf solutions of the fractional Navier-Stokes equations}\label{LH}

We begin by introducing the weak formulation of the hypo-dissipative Navier-Stokes equations.
\par\medskip\noindent
\textbf{Definition\,:} 
Let $\bu \in L^\infty \left[(0,T)\,; L^2 (\mathbb{T}^3) \right] \cap L^2 \left[(0,T)\,; H^{s} (\mathbb{T}^3)\right]$ and let $\bu_0 \in L^2 (\mathbb{T}^3)$ be the initial data. We say that $\bu$ is a Leray-Hopf weak solution if it satisfies the following weak formulation
\par\smallskip\noindent
\begin{equation}\label{weakformulation}
\int_0^T \I \bigg[\bu \partial_t \psi - \nu (A^{s/2}\bu)(A^{s/2} \psi)  + \bu \otimes\bu : \nabla \psi + P  \nabla \cdot \psi \bigg]\,dxdt = - \int_{\mathbb{T}^3} \bu_0 \psi (\bx,0)dx\,,
\end{equation}
for all $\psi \in \mathcal{D}\left[\mathbb{T}^3 \times [0,T)\right]$. Moreover, for all $T \geq 0$ the solution satisfies the following energy inequality
\begin{equation}\label{eninequal}
\shalf H_{0,1} (T) + \nu \int_0^T H_{s,1} \, dt \leq \shalf H_{0,1} (0) \,.
\end{equation}
At this point we recall the standard existence result for the Leray-Hopf solutions\,:
\par\smallskip\noindent 
\begin{theorem}
For all $s > 0$ and all initial data in $L^2 (\mathbb{T}^3)$, there exists a global Leray-Hopf solution satisfying the weak formulation of the fractional Navier-Stokes equations.
\end{theorem}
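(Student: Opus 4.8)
The statement is the fractional analogue of the classical Leray--Hopf existence theorem, and the plan is to adapt the standard Galerkin construction, the only structural input being the compact Sobolev embedding $H^s(\mathbb{T}^3)\hookrightarrow\hookrightarrow L^2(\mathbb{T}^3)$, which is available precisely because $s>0$. First I would fix the orthonormal basis of divergence-free eigenfunctions of the Stokes operator $A$ and let $P_N$ denote the projection onto the span of the first $N$ of them. Projecting (\ref{nseh}) yields a finite-dimensional system of ODEs for the Fourier coefficients of $\bu_N\coloneqq P_N\bu$, with a locally Lipschitz (indeed polynomial) right-hand side, so Picard--Lindel\"of gives a unique local-in-time solution. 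Testing this system against $\bu_N$ and using that $\I (\bu_N\cdot\nabla)\bu_N\cdot\bu_N\,dx=0$ for divergence-free fields, together with the fact that the pressure drops out, I obtain the energy identity $\shalf\tfrac{d}{dt}\|\bu_N\|_2^2+\nu\|A^{s/2}\bu_N\|_2^2=0$. Integrating gives the uniform bounds $\bu_N$ bounded in $L^\infty[(0,T);L^2]$ and in $L^2[(0,T);H^s]$, independently of $N$; in particular the a priori $L^2$ bound forbids blow-up of the ODE and extends the solution globally in time.

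The heart of the argument is the passage to the limit $N\to\infty$, which requires strong compactness to handle the quadratic term. For this I would bound the time derivative: from $\partial_t\bu_N=-P_N\big[(\bu_N\cdot\nabla)\bu_N\big]-\nu A^s\bu_N$, the dissipative term is bounded in $L^2[(0,T);H^{-s}]$ by the $L^2_tH^s_x$ estimate, while for the nonlinear term I test against $\psi\in H^M$ with $M>\fivehalves$, so that $\nabla\psi\in L^\infty$ and $|\I \bu_N\otimes\bu_N:\nabla\psi\,dx|\le\|\bu_N\|_2^2\,\|\nabla\psi\|_\infty$; the uniform $L^\infty_tL^2_x$ bound then places the nonlinearity in $L^\infty[(0,T);H^{-M}]$. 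Hence $\partial_t\bu_N$ is bounded in $L^2[(0,T);H^{-M}]$. With the chain $H^s\hookrightarrow\hookrightarrow L^2\hookrightarrow H^{-M}$, the Aubin--Lions--Simon lemma yields a subsequence converging strongly in $L^2[(0,T);L^2]$ to a limit $\bu\in L^\infty[(0,T);L^2]\cap L^2[(0,T);H^s]$. I expect this compactness step to be the main obstacle, in the sense that it is where the hypothesis $s>0$ is genuinely used: the compact embedding $H^s\hookrightarrow\hookrightarrow L^2$ degenerates as $s\to 0$, consistent with the fact that at $s=0$ one only has the damped Euler equation (\ref{damE1}) and no such construction.

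Finally I would pass to the limit in the weak formulation. Strong $L^2$ convergence together with the uniform $L^\infty_tL^2_x$ bound gives $\bu_N\otimes\bu_N\to\bu\otimes\bu$ in $L^1[(0,T)\times\mathbb{T}^3]$, which suffices to pass to the limit in the nonlinear term, while the linear terms converge by weak convergence in $L^2_tH^s_x$. Testing against divergence-free $\psi$ first eliminates the pressure, after which $P$ is recovered as a distribution solving $-\Delta P=\operatorname{div}\operatorname{div}(\bu\otimes\bu)$, so that the full identity (\ref{weakformulation}) holds. The energy inequality (\ref{eninequal}) follows from the Galerkin energy identity by weak lower semicontinuity of $\|\cdot\|_2$ and of $\int_0^T\|A^{s/2}\cdot\|_2^2\,dt$, the equality degenerating to the stated inequality under the weak limit. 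A short separate argument shows that $\bu$ attains the data: the uniform time-derivative bound makes $\bu$ weakly continuous into $L^2$ after redefinition on a null set, and the boundary term in (\ref{weakformulation}) identifies its trace at $t=0$ as $\bu_0$.
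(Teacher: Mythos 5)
Your proof is correct and is the standard Galerkin construction; the paper does not prove this theorem itself but defers to Appendix A of Colombo--De Lellis--De Rosa \cite{Colombo2018}, which proceeds along essentially the same lines (uniform energy bounds, a time-derivative estimate in a negative Sobolev space, Aubin--Lions compactness, and lower semicontinuity for the energy inequality). The only cosmetic point is that for large $s$ you should take $M \geq \max(s,\fivehalves)$ rather than just $M>\fivehalves$, so that the dissipative contribution $\nu_s A^s\bu_N$, which is bounded only in $L^2[(0,T);H^{-s}]$, also lands in $H^{-M}$ alongside the nonlinear term.
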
\noindent
For a proof see Appendix A in \cite{Colombo2018}.

%%%%%%%%%%%%%%%%%%%%%
\subsection{\small\color{blue}Summary of results}\label{summ}

The task of this subsection is to summarize the various functional properties possessed by solutions of the fractional Navier-Stokes equations in different ranges of $s>0$. These are laid out in the table below. Three of these results are new\,: namely an analogue of a result\footnote{In addition to the general regularity criteria on the velocity field for the three dimensional Navier-Stokes equations  \cite{Prodi1959,Serrin1962}, $\int_{0}^{t}\|\nabla\bu\|_{\infty}\,d\tau$ is another sufficient regularity condition which is applicable in both two and three dimensions. This time integral also applies to the Euler equations. Beale, Kato and Majda \cite{BKM1984} then showed how this result for the three dimensional Euler equations could be converted to control over $\int_{0}^{t}\|\bom\|_{\infty}\,d\tau$ at the price of making the upper bound super-exponential in time. In this paper we consider our result in Theorem \ref{BKMthm} to be an analogue of that of Prodi and Serrin.} of Prodi \cite{Prodi1959} and Serrin \cite{Serrin1962}  for $s > \third$\,; an equation of local energy balance for $s \geq \threequarters$\,; and an infinite hierarchy of time averages for $s > \fivesixths$. Various theorems valid in different ranges of $s$ are expressed in the rest of the subsection. Their proofs can be found in the following sections of the paper.
\begin{center}
\begin{tabular}{|c|c|}\hline
{\small$s$} & \textbf{\small\color{blue}Functional properties}\\ \hline\hline
{\footnotesize $0 < s < \third$} & {\footnotesize Non-uniqueness of Leray-Hopf solutions \cite{Colombo2018,Rosa2019}.}\\ \hline
\footnotesize $\third < s < 1$ & {\footnotesize An analogue of a Prodi-Serrin criterion  \cite{Prodi1959,Serrin1962} involving $\int_{0}^{T^*}\|A^{s/2}\bu\|_{\infty}^{\frac{2s}{3s-1}}dt$.}\\\hline
{\footnotesize $s \geq \threequarters$} & {\footnotesize An equation of local energy balance for Leray-Hopf solutions.}\\\hline
{\footnotesize $s > \threequarters$} & {\footnotesize A generalised Caffarelli-Kohn-Nirenberg result \cite{CKN1982,TY2015,CCM2020,KO2022}.}\\\hline
{\footnotesize $s > \fivesixths$} & {\footnotesize An infinite hierarchy of Leray-Hopf weak solution time averages.}\\\hline
%$\int_0^\infty \|\nabla^{n}\bu\|^{2(6s - 5)/(2n+4s-5)}_{2} dt < \infty$.}
\footnotesize $0 < s < \fivefourths$ & {\footnotesize Non-uniqueness of distributional solutions \cite{Luo2018,BV2019}.}\\
\hline
{\footnotesize $s \geq \fivefourths$} & {\footnotesize Existence and uniqueness of solutions \cite{JLL1959,JLL1969,TT2009}.}\\\hline\hline
\end{tabular}\label{table1}
\end{center}
\par\medskip\noindent
\textbf{1)} \textbf{\color{blue}The case $0 < s < \third$\,:} It has previously been noted in \S\ref{LH} that for any $s > 0$, there exists a global Leray-Hopf weak solution. It has been shown by Colombo, De Lellis and 
De Rosa in \cite{Colombo2018} that these solutions are non-unique for $s < \fifth$. This result was later improved in \cite{Rosa2019} to show the non-uniqueness if $s < \third$. In the range $\third \leq s < \shalf$ non-uniqueness of weak solutions with Leray-Hopf regularity has been proved in \cite{Colombo2018}, but the constructed solutions do not satisfy the energy inequality. Buckmaster and Vicol \cite{BV2019} have proved the non-uniqueness of distributional solutions of the Navier-Stokes equations (i.e. with $s=1$) while the work of Luo and Titi \cite{Luo2018} has extended this result to prove non-uniqueness of distributional solutions for any $s < \fivefourths$. These results have all been proved using the method of convex integration.
\par\vspace{3mm}\noindent
\textbf{2)} \textbf{\color{blue}The case $s > \third$\,:} The following theorem expresses a result which is similar in spirit to one of the Prodi-Serrin regularity criteria for the $3D$ Navier-Stokes equations \cite{Prodi1959,Serrin1962} (see \S\ref{BKMproof} for the proof)\,;
\begin{theorem}\label{BKMthm}
When $\third < s < 1$ and for initial data $\bu_0 \in H^{2} (\mathbb{T}^3)$, suppose there exists a strong solution of the fractional Navier-Stokes equations which loses regularity at the earliest time $T^*$, then 
\bel{BKMint}
\int_{0}^{T^*}\|A^{s/2}\bu\|_{\infty}^{\frac{2s}{3s-1}}dt = \infty\,.
\ee
Conversely, for every $T>0$, if $\int_{0}^{T}\|A^{s/2}\bu\|_{\infty}^{\frac{2s}{3s-1}}dt < \infty$, then strong solutions of the fractional Navier-Stokes equations remain regular.
\end{theorem}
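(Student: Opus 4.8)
The plan is to derive an a priori differential inequality for the homogeneous $\dot{H}^{2}$ seminorm $\|A\bu\|_{2}^{2}$ of the strong solution and to close it with Grönwall's inequality. Concretely, I will show that along the solution
\begin{equation}
\frac{d}{dt}\|A\bu\|_{2}^{2}+\nu_{s}\|A^{(s+2)/2}\bu\|_{2}^{2}\leq C\,\|A^{s/2}\bu\|_{\infty}^{\frac{2s}{3s-1}}\,\|A\bu\|_{2}^{2}.
\end{equation}
Granting this, if $\int_{0}^{T}\|A^{s/2}\bu\|_{\infty}^{2s/(3s-1)}\,dt<\infty$ then Grönwall bounds $\sup_{[0,T]}\|A\bu\|_{2}$; since the energy inequality \eqref{eninequal} already controls the lower-order norm, the full $H^{2}$ norm stays finite. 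Together with local existence of strong solutions for $H^{2}$ data and the standard continuation criterion (a strong solution persists as long as its $H^{2}$ norm remains bounded), this yields regularity on all of $[0,T]$ and proves the converse statement. The first assertion \eqref{BKMint} is then its contrapositive: by the continuation criterion, blow-up at $T^{*}$ forces $\limsup_{t\uparrow T^{*}}\|A\bu\|_{2}=\infty$, which by the inequality above is impossible unless the time integral diverges.

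The first step is the energy identity. I would take the $L^{2}(\mathbb{T}^{3})$ inner product of \eqref{nseh} with $A^{2}\bu$. The pressure term drops out because $A^{2}\bu$ is divergence free (so that $\I\nabla P\cdot A^{2}\bu\,dx=0$), the time derivative yields $\shalf\tfrac{d}{dt}\|A\bu\|_{2}^{2}$, and the dissipative term yields $\nu_{s}\|A^{(s+2)/2}\bu\|_{2}^{2}$, giving
\begin{equation}
\shalf\frac{d}{dt}\|A\bu\|_{2}^{2}+\nu_{s}\|A^{(s+2)/2}\bu\|_{2}^{2}= -\I A(\bu\cdot\nabla\bu)\cdot A\bu\,dx =: N .
\end{equation}
The decisive structural point is that the top-order part of the nonlinearity cancels: writing $A(\bu\cdot\nabla\bu)=\bu\cdot\nabla(A\bu)+[A,\bu\cdot\nabla]\bu$ and using $\mathrm{div}\,\bu=0$ one has $\I(\bu\cdot\nabla A\bu)\cdot A\bu\,dx=\shalf\I\bu\cdot\nabla|A\bu|^{2}\,dx=0$, so that $N=-\I[A,\bu\cdot\nabla]\bu\cdot A\bu\,dx$ is a genuinely lower-order commutator term (for this integer order it is the explicit local commutator, quadratic in $\nabla\bu$ and $\nabla^{2}\bu$).

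The heart of the proof is the nonlinear estimate
\begin{equation}
|N|\;\lesssim\;\|A^{s/2}\bu\|_{\infty}\,\|A\bu\|_{2}^{\frac{3s-1}{s}}\,\|A^{(s+2)/2}\bu\|_{2}^{\frac{1-s}{s}} ,
\end{equation}
which I would obtain by distributing derivatives across the three factors of $\bu$ in $N$ using a fractional Leibniz (Kato--Ponce) rule, placing $s$ derivatives on one factor in $L^{\infty}$ and estimating the remaining factors by Gagliardo--Nirenberg interpolation between $\dot{H}^{2}$ and $\dot{H}^{s+2}$. A count of homogeneity fixes the two exponents uniquely, and both are positive precisely when $\third<s<1$; indeed $\frac{3s-1}{s}>0$ is exactly the condition $s>\third$. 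This is consistent with the scaling \eqref{rescal1a}, under which $\int_{0}^{T}\|A^{s/2}\bu\|_{\infty}^{2s/(3s-1)}\,dt$ is invariant, so that $\frac{2s}{3s-1}$ is the scale-critical exponent. Feeding this bound into the energy identity and applying Young's inequality with the conjugate pair $\big(\frac{2s}{3s-1},\frac{2s}{1-s}\big)$ absorbs $\tfrac{\nu_{s}}{2}\|A^{(s+2)/2}\bu\|_{2}^{2}$ into the dissipation and leaves exactly the Grönwall-ready right-hand side $C\|A^{s/2}\bu\|_{\infty}^{2s/(3s-1)}\|A\bu\|_{2}^{2}$.

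The main obstacle I anticipate is the rigorous proof of the nonlinear estimate with the $L^{\infty}$ norm carried by the fractional quantity $A^{s/2}\bu$ rather than by $\nabla\bu$: one must verify that the fractional product/commutator estimate and the associated mixed $L^{\infty}$--$L^{2}$ Gagliardo--Nirenberg inequality are valid on $\mathbb{T}^{3}$ throughout $\third<s<1$, and that the splitting of derivatives producing the exponents $\frac{3s-1}{s}$ and $\frac{1-s}{s}$ can be realised simultaneously. A secondary technical point is justifying the formal computations (e.g.\ via a Galerkin or mollification scheme and passage to the limit) and supplying the local existence and continuation theory for $H^{2}$ strong solutions that converts the a priori bound into the two equivalent regularity statements.
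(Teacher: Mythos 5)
Your overall skeleton coincides with the paper's: a differential inequality for a Sobolev seminorm whose nonlinear term is bounded by $\|A^{s/2}\bu\|_{\infty}$ times interpolants of the norm and the dissipation, Young's inequality with the conjugate pair $\bigl(\tfrac{2s}{3s-1},\tfrac{2s}{1-s}\bigr)$ to absorb the dissipative factor, and Gr\"onwall plus a continuation argument. The difference --- and the gap --- is that you run this at the $\dot{H}^{2}$ level and rest the whole proof on the asserted trilinear bound $|N|\lesssim \|A^{s/2}\bu\|_{\infty}\,\|A\bu\|_{2}^{(3s-1)/s}\,\|A^{(s+2)/2}\bu\|_{2}^{(1-s)/s}$, which you justify only by a homogeneity count. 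Dimensional consistency does not make this provable, and the natural routes fail precisely in the interesting regime $\third<s<\shalf$. If you first apply H\"older to get $|N|\leq\|\nabla\bu\|_{\infty}\|A\bu\|_{2}^{2}$ and then interpolate $\|\nabla\bu\|_{\infty}\lesssim\|A^{s/2}\bu\|_{\infty}^{1-\mu}\|\bu\|_{\dot{H}^{2+s}}^{\mu}$, the exponent forced by scaling is $\mu=2(1-s)$, which is a legitimate interpolation only for $s>\shalf$ and in any case yields, after Young, a term $\|A\bu\|_{2}^{2/s}$ --- a superlinear Gr\"onwall that does not close. If instead you try to extract $A^{s/2}\bu$ in $L^{\infty}$ from the $\nabla\bu$ factor of the commutator by duality and a fractional Leibniz rule, the excess $1-s$ derivatives land (in the worst frequency interaction) on a single $\nabla^{2}\bu$ factor, producing $\|\bu\|_{\dot{H}^{3-s}}$ with $3-s>2+s$ for $s<\shalf$, i.e.\ beyond the reach of the available dissipation $\|\bu\|_{\dot H^{2+s}}$. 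So the ``main obstacle'' you flag is not a technicality: at $n=2$ the scheme does not close by these means.

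The paper avoids this by working at level $n>2+\shalf s$ rather than $n=2$. There the crude bound $\|\nabla\bu\|_{\infty}H_{n,1}$ is converted into $\|A^{s/2}\bu\|_{\infty}H_{n+p,1}$ with $p=\shalf(1-s)$, by Agmon's inequality followed by two Gagliardo--Nirenberg interpolations in which \emph{each} factor is separately interpolated between $\|A^{s/2}\bu\|_{\infty}$ and $\dot{H}^{n+p}$ (so that the excess $1-s$ derivatives are split evenly, $p$ onto each $L^{2}$ factor, without any paraproduct analysis); then $H_{n+p,1}$ interpolates between $H_{n,1}$ and $H_{n+s,1}$ exactly when $p\leq s$, i.e.\ $s\geq\third$, which is where the threshold enters. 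The Agmon step is what forces $n+p>\fivehalves$, i.e.\ $n>2+\shalf s$, and is exactly the obstruction to your $n=2$ version. To repair your argument you should either prove your trilinear estimate by a genuine paraproduct analysis (and check it survives for $s$ near $\third$), or lift the estimate to $H_{n,1}$ with $n>2+\shalf s$ as the paper does; in the latter case your commutator cancellation is subsumed by the standard ladder inequality \eqref{BKM1a} and the argument reduces to the paper's.
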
\noindent
There are four things on which to remark. Firstly, the proof displayed in \S\ref{BKMproof} works only in the range $\third < s < 1$. Secondly, when $s=1$ we recover the standard result, namely $\int_{0}^{T}\|\nabla\bu\|_{\infty}dt$.  Thirdly, close to $s=\third$, the fractional velocity gradient $A^{s/2}\bu$ needs to be not only $L^{\infty}$ in space but also nearly $L^{\infty}$ in time. Fourthly, we remark that this is truly a (fractional) Navier-Stokes and not an Euler result, as the proof will show. In passing we remark that the integral in (\ref{BKMint}) is the only object that need be monitored for regularity purposes in a numerical simulation.
\par\vspace{3mm}\noindent
\textbf{3)} \textbf{\color{blue}The case $s \geq \threequarters$\,:} Next we turn to the equation of local energy balance. It has been proved by Duchon and Robert \cite{DR2000} that Leray-Hopf solutions of the (standard) Navier-Stokes equations satisfy a local energy balance. Under an additional regularity assumption, this result is also true for the Euler equations. Here, we extend Duchon and Robert's approach \cite{DR2000} to the fractional Navier-Stokes equations.
\par\smallskip\noindent
First we introduce some notation. Let $\varphi \in C_c^\infty\left[\mathbb{R}^3 ; \mathbb{R}\right]$ be a standard radial mollifier with the property that $\int_{\mathbb{R}^3} \varphi (\bx) dx = 1$. We also introduce the notation
\begin{equation*}
\varphi^\epsilon (\bx) \coloneqq \frac{1}{\epsilon^3} \varphi \bigg( \frac{\bx}{\epsilon} \bigg)\,. 
\end{equation*}
In the case $s \geq \threequarters$, it is possible to establish an equation of local energy balance for Leray-Hopf solutions. This can be demonstrated in a Corollary to\,: 
\begin{theorem}\label{thmenbal}
Let $\bu \in L^{3} \left[(0,T); L^3 (\mathbb{T}^3)\right]$ be a Leray-Hopf weak solution of the fractional Navier-Stokes equations. Then the following equation of local energy balance holds for all $\psi \in \mathcal{D}\left[\mathbb{T}^3 \times (0,T)\right]$
\begin{equation}\label{balanceequation}
\int_{0}^{T}\int_{\mathbb{T}^3} \left[ |\bu|^{2} \partial_t \psi - 2 \nu (A^{s/2}\bu)\cdot A^{s/2} (\bu \psi)  + 2 P \bu \cdot \nabla \psi - \shalf D (\bu) \psi  + |\bu|^{2}\left(\bu \cdot \nabla \psi\right)\right]\,dxdt = 0\,,
\end{equation}
where the defect term is given by 
\beq{def1}
D (\bu) (\bx,\,t) &\coloneqq& \shalf \lim_{\epsilon \rightarrow 0} \int_{\mathbb{T}^3} \nabla \varphi_\epsilon (\bxi) \cdot \delta\bu (\bxi;\,\bx,\,t) |\delta \bu (\bxi;\,\bx,t)|^{2}\,d \xi\,,\\
\delta \bu (\bxi;\,\bx,t) &\coloneqq& \bu (\bx + \bxi, t) - \bu(\bx,t)\,.\label{increment}
\eeq
Moreover, the defect term is independent of the choice of mollifier.
\end{theorem}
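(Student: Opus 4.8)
The plan is to carry the mollification scheme of Duchon and Robert \cite{DR2000} over to the fractional setting, the one genuinely new ingredient being that the dissipative operator $A^s$ must be handled through the quadratic form of the self-adjoint multiplier $A^{s/2}$ rather than through the Leibniz identity available for the ordinary Laplacian. First I would mollify the equation in space: writing $\bu^\epsilon = \varphi^\epsilon * \bu$ and $P^\epsilon = \varphi^\epsilon * P$, and using that $A^s$ is a Fourier multiplier on $\mathbb{T}^3$ and therefore commutes with spatial convolution, the mollified field satisfies
\[
\partial_t \bu^\epsilon + \nu A^s \bu^\epsilon + \nabla\cdot\overline{(\bu\otimes\bu)}^\epsilon = -\nabla P^\epsilon ,
\]
which holds pointwise in $\bx$ (and weakly in $t$) because $\bu^\epsilon$ is smooth in space and satisfies $\nabla\cdot\bu^\epsilon = 0$. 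Since $P$ is recovered from $\bu$ through the pressure Poisson relation, the hypothesis $\bu\in L^3$ gives $P\in L^{3/2}$ by Calder\'on--Zygmund theory, so that every flux occurring below is integrable in space-time.

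I would then take the scalar product of the mollified equation with $2\bu^\epsilon\psi$ and integrate over $\mathbb{T}^3\times(0,T)$. Integrating the time derivative by parts yields $-\int_0^T\I|\bu^\epsilon|^2\partial_t\psi$, which converges to the corresponding term in \eqref{balanceequation} because $\bu^\epsilon\to\bu$ in $L^3_{t,x}$ forces $|\bu^\epsilon|^2\to|\bu|^2$ in $L^{3/2}_{t,x}$. The pressure contribution, rewritten using $\nabla\cdot\bu^\epsilon=0$ as $-\int_0^T\I 2 P^\epsilon\,\bu^\epsilon\cdot\nabla\psi$, converges to the pressure-flux term. For the dissipative term I would use the self-adjointness of $A^{s/2}$ to write $\I(A^s\bu^\epsilon)\cdot(\bu^\epsilon\psi)\,dx = \I(A^{s/2}\bu^\epsilon)\cdot A^{s/2}(\bu^\epsilon\psi)\,dx$; since $\bu\in L^2 H^s$ by Leray-Hopf regularity and multiplication by the smooth function $\psi$ is bounded on $H^s$, one has $\bu^\epsilon\psi\to\bu\psi$ in $L^2 H^s$, which produces the term $-2\nu (A^{s/2}\bu)\cdot A^{s/2}(\bu\psi)$.

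The crux is the nonlinear term $-\int_0^T\I 2\,\bu^\epsilon\cdot[\nabla\cdot\overline{(\bu\otimes\bu)}^\epsilon]\,\psi$. Following Duchon--Robert, I would expand $\overline{(\bu\otimes\bu)}^\epsilon$ in terms of the increments $\delta\bu(\bxi;\bx,t)$ and exploit incompressibility together with the symmetrisation $\bxi\mapsto-\bxi$ to obtain an algebraic identity of the form
\[
2\,\bu^\epsilon\cdot[\nabla\cdot\overline{(\bu\otimes\bu)}^\epsilon] = \nabla\cdot\mathbf{G}^\epsilon + D_\epsilon(\bu) + E_\epsilon ,
\]
where $\mathbf{G}^\epsilon\to|\bu|^2\bu$ in $L^1_{t,x}$, the term $E_\epsilon$ collects remainders cubic in the increments at scale $\epsilon$, and the defect density is the pre-limit integrand of \eqref{def1},
\[
D_\epsilon(\bu)(\bx,t) = \shalf\I \nabla\varphi^\epsilon(\bxi)\cdot\delta\bu(\bxi;\bx,t)\,|\delta\bu(\bxi;\bx,t)|^{2}\,d\xi .
\]
The divergence term pairs with $\nabla\psi$ and converges to $\int_0^T\I|\bu|^2(\bu\cdot\nabla\psi)$ because $|\bu|^2\bu\in L^1_{t,x}$. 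The hypothesis $\bu\in L^3_{t,x}$ is precisely what renders the cubic increment $\nabla\varphi^\epsilon\cdot\delta\bu\,|\delta\bu|^2$ integrable in $\bxi$ with a uniform bound, so that $\int_0^T\I D_\epsilon(\bu)\psi$ converges to $\int_0^T\I D(\bu)\psi$, defining $D(\bu)$ as in \eqref{def1}, while $E_\epsilon\to0$ in $\mathcal{D}'$. Collecting the limits then gives \eqref{balanceequation}.

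Mollifier-independence follows for free: every term in \eqref{balanceequation} apart from the defect converges to a limit manifestly independent of $\varphi$, so for each admissible $\psi$ the pairing $\int_0^T\I D(\bu)\psi$ is forced to equal the (mollifier-independent) sum of the remaining terms, whence $D(\bu)$ is independent of the choice of $\varphi$ as a distribution. I expect the main obstacle to be the careful derivation of the increment identity for the nonlinear flux together with the proof that its flux part, its defect part $D_\epsilon$, and the remainder $E_\epsilon$ converge \emph{separately} under only the $L^3_{t,x}$ hypothesis; the decisive point is to establish that $D_\epsilon$ has a limit and that $E_\epsilon\to0$, which is exactly where the cubic structure of the increments and the scaling $\nabla\varphi^\epsilon(\bxi)=\epsilon^{-4}(\nabla\varphi)(\bxi/\epsilon)$ are used.
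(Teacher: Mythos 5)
Your overall strategy (mollify, test, pass to the limit, solve for the defect) is the right one, and your treatment of the time-derivative, pressure and $A^{s/2}$ quadratic-form terms is correct. The genuine gap is in the pairing you choose for the nonlinear term. You multiply the mollified equation by $2\bu^\epsilon\psi$, so both factors in the energy density are mollified. The paper instead follows Duchon--Robert's \emph{asymmetric} scheme: it pairs the mollified equation with the unmollified field $\psi\bu$, pairs the weak formulation \eqref{weakformulation} with the test function $\bu^\epsilon\psi$ (which requires first enlarging the admissible class of test functions, Lemma \ref{generaltestfunctions} --- a step your route avoids), and subtracts. It is precisely this cross pairing that makes the four-term expression
\begin{equation*}
- \nabla \cdot (|\bu|^2 \bu)^\epsilon + \bu \cdot \nabla (|\bu|^2)^\epsilon
+ 2\, \bu \otimes \nabla : (\bu \otimes \bu)^\epsilon - 2\, \bu \otimes \bu : \nabla \bu^\epsilon
\end{equation*}
equal, as an \emph{exact algebraic identity} \eqref{moll3}, to the Duchon--Robert integrand $\int\nabla\varphi^\epsilon(\bxi)\cdot\delta\bu\,|\delta\bu|^2\,d\xi$ of \eqref{def1}: expanding the cube of the increment produces terms whose factors outside the mollification are the unmollified $\bu(\bx)$, exactly matching the cross terms (unmollified field against mollified flux, and unmollified flux against mollified field) generated by the asymmetric pairing.

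With your symmetric pairing the nonlinear term is $2\bu^\epsilon\cdot[\nabla\cdot(\bu\otimes\bu)^\epsilon]\psi$, and the natural decomposition is the Constantin--E--Titi one: writing $(\bu\otimes\bu)^\epsilon=\bu^\epsilon\otimes\bu^\epsilon+r_\epsilon$ gives $\nabla\cdot(|\bu^\epsilon|^2\bu^\epsilon)+2\nabla\cdot(r_\epsilon\bu^\epsilon)-2\,r_\epsilon:\nabla\bu^\epsilon$, whose dissipative part $-2\,r_\epsilon:\nabla\bu^\epsilon$ is \emph{not} the pre-limit integrand of \eqref{def1}. Your claimed identity $2\bu^\epsilon\cdot[\nabla\cdot(\bu\otimes\bu)^\epsilon]=\nabla\cdot\mathbf{G}^\epsilon+D_\epsilon(\bu)+E_\epsilon$, with $D_\epsilon$ the Duchon--Robert integrand, therefore forces $E_\epsilon$ to absorb the difference between the CET and Duchon--Robert defect densities. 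That difference has exactly the same size and scaling as the defect itself (cubic in increments times $\epsilon^{-1}$), so it does not vanish for the scaling reasons you invoke; proving that it tends to zero in $\mathcal{D}'$ for a mere $L^3_{t,x}$ field is an additional, nontrivial lemma that your outline neither states nor proves, and without it you have established a local energy balance with \emph{some} defect term, but not with the one defined in \eqref{def1}. Either supply that identification lemma, or switch to the asymmetric pairing, after which the rest of your limit arguments go through as described. Your closing observation --- that mollifier-independence of $D(\bu)$ follows because every other term in \eqref{balanceequation} has a mollifier-independent limit --- is correct and is also how the paper concludes.
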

%%%%%%%%
\par\smallskip\noindent
\begin{corollary} \label{corenbal}
The equation of local energy balance \eqref{balanceequation} holds automatically for Leray-Hopf solutions of the hypo-dissipative Navier-Stokes equations if $s \geq \threequarters$.
\end{corollary}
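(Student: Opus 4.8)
The plan is to show that the single additional hypothesis of Theorem \ref{thmenbal}---namely $\bu \in L^{3}\left[(0,T);L^{3}(\mathbb{T}^3)\right]$---is automatically satisfied by every Leray-Hopf weak solution as soon as $s \geq \threequarters$; the balance equation \eqref{balanceequation} then follows at once by applying the theorem. Everything reduces to a single Ladyzhenskaya-type interpolation between the two ingredients of Leray-Hopf regularity recorded in \S\ref{LH}, namely $\bu \in L^{\infty}\left[(0,T);L^{2}(\mathbb{T}^3)\right]$ and $\bu \in L^{2}\left[(0,T);H^{s}(\mathbb{T}^3)\right]$.

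First I would fix the relevant Sobolev embedding on the three-dimensional torus, $H^{\sigma}(\mathbb{T}^3)\hookrightarrow L^{q}(\mathbb{T}^3)$ with $q=6/(3-2\sigma)$, valid for $0\leq\sigma<\threehalves$. The borderline value $\sigma=\threequarters$ yields the endpoint embedding $H^{3/4}(\mathbb{T}^3)\hookrightarrow L^{4}(\mathbb{T}^3)$, which holds because $\tfrac{1}{4}=\tfrac{1}{2}-\tfrac{3/4}{3}$ and we remain strictly away from the forbidden $L^{\infty}$ endpoint. Since $H^{s}(\mathbb{T}^3)\hookrightarrow H^{3/4}(\mathbb{T}^3)$ for every $s\geq\threequarters$, the $L^{2}_{t}H^{s}_{x}$ control upgrades to $\bu \in L^{2}\left[(0,T);L^{4}(\mathbb{T}^3)\right]$ across the whole range $s\geq\threequarters$.

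The second step is the pointwise-in-time interpolation $\|\bu\|_{3}\leq\|\bu\|_{2}^{1/3}\|\bu\|_{4}^{2/3}$, which follows from $\tfrac{1}{3}=\tfrac{1/3}{2}+\tfrac{2/3}{4}$. Cubing and integrating in time, I would bound
\[
\int_{0}^{T}\|\bu\|_{3}^{3}\,dt \;\leq\; \int_{0}^{T}\|\bu\|_{2}\,\|\bu\|_{4}^{2}\,dt \;\leq\; \Big(\esssup_{0<t<T}\|\bu\|_{2}\Big)\,\int_{0}^{T}\|\bu\|_{4}^{2}\,dt\,,
\]
whose right-hand side is finite: the time integral is controlled by the $L^{2}_{t}L^{4}_{x}$ bound of the previous step, while the essential supremum is controlled by the $L^{\infty}_{t}L^{2}_{x}$ bound. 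Hence $\bu\in L^{3}\left[(0,T);L^{3}(\mathbb{T}^3)\right]$, and Theorem \ref{thmenbal} applies verbatim to produce \eqref{balanceequation}.

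I do not expect a genuine obstacle here, since this is a clean interpolation argument; the only point demanding care is the critical index $s=\threequarters$, at which one is forced to use the \emph{endpoint} embedding $H^{3/4}\hookrightarrow L^{4}$ rather than a strictly subcritical one. For each $s>\threequarters$ the exponent $q=6/(3-2s)$ strictly exceeds $4$ and the identical scheme delivers cubic space-time integrability with room to spare. This identifies $s=\threequarters$ as precisely the threshold at which the integrability underpinning the Duchon-Robert defect term \eqref{def1} first becomes available, consistent with the way the corollary is stated.
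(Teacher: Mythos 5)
Your proposal is correct and follows essentially the same route as the paper: Lemma \ref{pressuregularity} likewise establishes $\bu \in L^{3}\left[\mathbb{T}^3 \times (0,T)\right]$ by interpolating between the Leray--Hopf bounds $L^{\infty}_t L^{2}_x$ and $L^{2}_t H^{s}_x$ (the paper uses a single $s$-dependent Gagliardo--Nirenberg inequality, $\lVert \bu \rVert_{L^3} \leq C H_{0,1}^{(2s-1)/4s} H_{s,1}^{1/4s}$, where your two-step detour through $H^{3/4}\hookrightarrow L^{4}$ and H\"older gives the same endpoint computation), and then invokes Theorem \ref{thmenbal}.
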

\par\vspace{-2mm}\noindent
The proof can be found in \S\ref{sect:thm2}.
\par\vspace{3mm}\noindent
\textbf{4)} \textbf{\color{blue}The case $s > \fivesixths$\,:} before stating the results for the regularity of Leray-Hopf solutions\footnote{The origin of the exponent $s=\fivesixths$ is as follows\,: it is elementary to show that the critical space for the fractional Navier-Stokes equations is $H^{5/2-2s}(\mathbb{T}^3)$. This coincides with $H^{s}(\mathbb{T}^3)$ (which is part of the Leray-Hopf regularity) when $s=\fivesixths$.}, let us begin with the definition 
\begin{equation}\label{deltadef}
\delta_{n,s} \coloneqq \frac{6s - 5}{2n + 4s - 5}\,.
\end{equation}
\begin{theorem}\label{fivesixthsthm}
Let $\fivesixths < s < \fivefourths$ and $1 \leq n < \infty$, and let $\bu$ be a Leray-Hopf solution. Then $\bu$ belongs to the following spaces
\begin{equation} \label{fivesixthsB}
\bu \in L^{2 \delta_{n,s}} \left[(0,T)\,; H^{n} (\mathbb{T}^3)\right].
\end{equation}
\end{theorem}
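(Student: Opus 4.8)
The plan is to reduce the statement to an a priori bound of the shape $\int_0^T H_{n,1}^{\delta_{n,s}}\,dt \le C(\|\bu_0\|_2,\nu,T,n,s)$ and then transfer it to the Leray--Hopf solution. Since on $\mathbb{T}^3$ (with mean--zero velocity) the quantity $\|\bu\|_{H^n}^2$ is comparable to $H_{0,1}+H_{n,1}$ and $H_{0,1}$ is bounded by the energy inequality \eqref{eninequal}, controlling $\int_0^T H_{n,1}^{\delta_{n,s}}\,dt$ suffices. I would establish this bound first on the smooth Galerkin approximations, where every $H_{m,1}$ is finite, and then pass to the limit by weak lower semicontinuity, taking care that the final constant depends only on the two Leray--Hopf quantities $\sup_t H_{0,1}$ and $\int_0^T H_{s,1}\,dt$ (both controlled by $\|\bu_0\|_2$) and not on high--order norms of the data. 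Two consistency checks orient the whole computation: at $n=s$ one has $\delta_{s,s}=1$, so the claim collapses to the Leray--Hopf datum $\bu\in L^2[(0,T);H^s]$; and at $s=1$ one recovers the exponent $2\delta_{n,1}=2/(2n-1)$ of the Foias--Guillop\'e--Temam hierarchy for the classical equations \cite{FGT1981}.

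The engine of the proof is the differential inequality at derivative level $n$. Testing \eqref{nseh} with $A^{n}\bu$ gives, on smooth solutions, $\shalf\frac{d}{dt}H_{n,1}+\nu_{s}H_{n+s,1} = -\I A^{n}\bu\cdot(\bu\cdot\nabla\bu)\,dx$, the pressure dropping out by incompressibility. I would rewrite the right--hand side as a commutator of $\bu\cdot\nabla$ against $A^{n/2}$ (the pure transport part integrating to zero by $\mathrm{div}\,\bu=0$) and then bound it by Gagliardo--Nirenberg interpolation purely among the three quantities $H_{0,1}$, $H_{s,1}$ and the dissipation $H_{n+s,1}$, with a Young inequality absorbing a small multiple of $H_{n+s,1}$ into the dissipation term. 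This leaves $\shalf\frac{d}{dt}H_{n,1}+\tfrac{1}{2}\nu_{s}H_{n+s,1}\le C\,H_{0,1}^{a}H_{s,1}^{b}H_{n,1}^{c}$. The interpolation exponents are fixed by the same derivative count that determines the critical space $H^{5/2-2s}$; indeed $\delta_{n,s}$ is exactly the slope of the interpolation line through the dissipation datum $(n,\tfrac{1}{p})=(s,\tfrac{1}{2})$ and the critical--space datum $(\tfrac{5}{2}-2s,0)$, which is why it equals $1$ at $n=s$ and why $\delta_{n,s}>0$ forces the subcriticality gap $3s-\tfrac{5}{2}>0$, i.e. $s>\fivesixths$.

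With the differential inequality in hand I would integrate in time. The Gagliardo--Nirenberg bound $H_{n,1}\le C\,H_{n+s,1}^{n/(n+s)}H_{0,1}^{s/(n+s)}$ turns the dissipation into a super--linear damping term $\nu_{s}H_{n+s,1}\gtrsim \nu_{s}H_{0,1}^{-s/n}H_{n,1}^{1+s/n}$, which both yields the parabolic smoothing $H_{n,1}(t)\lesssim t^{-n/s}$ for small $t$ and, after multiplication by a time weight $t^{\beta}$ that annihilates the (infinite) contribution of the data at $t=0$, lets one trade the time integral of $H_{n,1}^{\delta_{n,s}}$ against the finite dissipation integral $\int_0^T H_{s,1}\,dt$ and the uniform bound on $H_{0,1}$. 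Collecting powers reproduces $\delta_{n,s}$ and closes the estimate uniformly in the Galerkin parameter.

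The hard part is the nonlinear estimate of the second step: one must verify that the commutator closes using only $H_{0,1}$, $H_{s,1}$ and $H_{n+s,1}$, rather than an uncontrolled $L^\infty$ norm of $\bu$ or $\nabla\bu$ (as in the classical $s=1$ case, where $H^1\not\hookrightarrow L^\infty$ yet closure still holds), and that the resulting exponents are precisely those encoded in $\delta_{n,s}$; this is where the restrictions $\fivesixths<s$ and $s<\fivefourths$ are consumed, the latter guaranteeing that the problem is not already globally regular and hence that the time power is genuinely fractional rather than $\delta=1$. A secondary technical obstacle is the singular--in--time behaviour: because the data is only $L^2$, $H_{n,1}(0)$ is infinite, so one must lean on the time weight (equivalently, on the full--measure set of regular times on which the smooth energy identity is valid) to make the passage from the approximations to the Leray--Hopf solution legitimate.
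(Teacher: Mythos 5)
Your setup is right: the differential inequality $\shalf\frac{d}{dt}H_{n,1}+\nu_s H_{n+s,1}\le(\text{nonlinear term})$, the consistency checks at $n=s$ and $s=1$, and the observation that $\delta_{n,s}$ is the interpolation slope between the dissipation datum and the critical space $H^{5/2-2s}$ all match the paper. But the step where the theorem is actually proved --- integrating a \emph{superlinear} differential inequality in time when the only global information is $H_{s,1}\in L^1(0,T)$ and $H_{0,1}\in L^\infty(0,T)$ --- is where your argument fails. Your mechanism is parabolic smoothing ($H_{n,1}(t)\lesssim t^{-n/s}$) plus a weight $t^{\beta}$ anchored at $t=0$. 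For $\fivesixths<s<\fivefourths$ global regularity is open, so a Leray--Hopf solution may have \emph{interior} singular times; the smoothing bound is a local statement valid only up to the first such time, and a weight vanishing at $t=0$ does nothing for the (possibly countably many) later regularity epochs, on each of which the ``initial datum'' is again only of Leray--Hopf regularity. You acknowledge the full-measure set of regular times in passing, but you do not supply the device that lets the estimate survive the summation over epochs.

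The paper's proof supplies exactly that device, following Foias--Guillop\'e--Temam: work in derivative increments of $s$ (with $H_{ms,1}$, $m=1,2,\dots$), prove via the commutator estimate of Lemma \ref{commutatorlemma} the inequality $\shalf\frac{d}{dt}H_{ms,1}+c\,\nu_s H_{(m+1)s,1}\le c\,H_{s,1}\,H_{ms,1}^{\alpha_m}$ with $\alpha_m=\frac{2ms}{2(m+2)s-5}>1$, and then \emph{divide by} $(1+H_{ms,1})^{\alpha_m}$ before integrating. The time-derivative term then integrates to a telescoping quantity proportional to $(1+H_{ms,1})^{-(\alpha_m-1)}$, which is bounded on every regularity interval and whose boundary contribution vanishes at a blow-up time (this is where $s<\fivefourths$ is consumed, via $\alpha_m>1$ --- not, as you suggest, merely to avoid triviality), yielding $\int_0^T H_{(m+1)s,1}(1+H_{ms,1})^{-\alpha_m}\,dt\lesssim\int_0^T H_{s,1}\,dt$ globally, summed over all epochs. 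An induction in $m$ with H\"older's inequality then converts this into $H_{ms,1}\in L^{\gamma_m}(0,T)$ with $\gamma_m=\frac{6s-5}{2(m+2)s-5}$, and a final interpolation gives \eqref{fivesixthsB}. Without the division trick (or an equivalent), ``collecting powers'' does not close, because Gronwall-type trading is unavailable for a right-hand side that is superlinear in the quantity being estimated; this, together with the unproved commutator/interpolation step that you yourself flag as the hard part, makes the proposal a genuine gap rather than an alternative route.
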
\noindent
\begin{remark}
We note that it is possible with a slightly different method to establish a hierarchy of weak solution time averages if $s \geq \fivefourths$. However, as the fractional Navier-Stokes equations are known to be globally well-posed for this range of $s$ such a result does not yield anything new and hence we do not present it here.
\end{remark}
The proof can be found in \S\ref{sect:thm3} and is based on the seminal but relatively unknown paper of Foias, Guillop\'e and Temam \cite{FGT1981} in which Theorem \ref{fivesixthsthm} was proved in the case $s = 1$. As part of the proof, we need a local well-posedness result as well as a weak-strong uniqueness result for the fractional Navier-Stokes equations. As these results in the required form seem to be absent from the literature, we establish them in \S\ref{localwellposedappendix}. Theorem \ref{fivesixthsthm} shows that there is an infinite hierarchy of finite time integrals (or averages), as advertised in the 5th line of the Table in \S\ref{summ}. How this result ties in with the invariance properties given in (\ref{rescal1a}) is left to \S\ref{Con}.
\par\vspace{3mm}\noindent

%%%%%%%%%%%%%%%%%%%%%%%%%%%%%
\section{\large\color{blue} Proof of Theorem \ref{BKMthm}}\label{BKMproof}

The statement of Theorem \ref{BKMthm} is based on the assumption that we start with a regular solution in $[0,\,T^{*})$. Thus we are able to differentiate the (spatial) $H_{n,1}$-norms with respect to time. We begin with the standard ladder of Sobolev norms which can be obtained using standard energy estimates in an adaption of the proof of Theorem 6.1 in \cite{DG1995}\,:
\bel{BKM1a}
\shalf \frac{d}{dt} H_{n,1} \leq - \nu_{s}H_{n+s,1} + c_{n,s} \|\nabla\bu\|_{\infty}H_{n,1}\,.
\ee
Now we would like to adapt this estimate. $\|\nabla\bu\|_{\infty}H_{n,1}$ and $\|\nabla^{s}\bu\|_{\infty}H_{n+p,1}$ 
(where $p = \shalf (1-s) \geq 0$) have the same dimensions\,; i.e. under the transformation \eqref{rescal1a} they satisfy the same scaling relation. Thus, we seek an inequality relation between them, which we prove in the next lemma.
\begin{lemma}
Provided $0< s < 1$ and $n > 2 + \shalf s$, with $p = \shalf (1-s)$, then the following inequality holds
\bel{BKM1}
\|\nabla\bu\|_{\infty}H_{n,1} \leq c_{n,s} \|A^{s/2} \bu\|_{\infty}H_{n+p,1}\,.
\ee
\end{lemma}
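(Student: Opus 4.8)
The plan is to exploit the fact, already noted just before the lemma, that the two sides of \eqref{BKM1} carry the same scaling dimension under \eqref{rescal1a}. A scale-invariant estimate of this type should follow from Gagliardo--Nirenberg interpolation, the only freedom being the interpolation exponents, which are then forced by matching powers of $\lambda$. Writing $H_{n,1}\simeq\|A^{n/2}\bu\|_2^2$ and $H_{n+p,1}\simeq\|A^{(n+p)/2}\bu\|_2^2$ (equivalences of norms on mean-zero fields, the constants being absorbed into $c_{n,s}$), and noting that all quantities involved are insensitive to the spatial mean of $\bu$ so that I may take $\bu$ mean-zero, I would prove the estimate by establishing two interpolation inequalities and multiplying them.

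First I would interpolate the top-order quantity $\|\nabla\bu\|_\infty = \|A^{1/2}\bu\|_\infty$ between the lower $L^\infty$ norm $\|A^{s/2}\bu\|_\infty$ and the high $L^2$ norm $\|A^{(n+p)/2}\bu\|_2$, and second I would interpolate the middle norm $\|A^{n/2}\bu\|_2$ between the same two endpoints:
\[
\|\nabla\bu\|_\infty \leq c_{n,s}\,\|A^{s/2}\bu\|_\infty^{1-\theta}\,\|A^{(n+p)/2}\bu\|_2^{\theta},\qquad \theta = \frac{1-s}{\,n-1-\threehalves s\,},
\]
\[
\|A^{n/2}\bu\|_2 \leq c_{n,s}\,\|A^{s/2}\bu\|_\infty^{1-\mu}\,\|A^{(n+p)/2}\bu\|_2^{\mu},\qquad \mu = \frac{\,n-\threehalves-s\,}{\,n-1-\threehalves s\,}.
\]
In both cases the interpolated derivative order ($1$ and $n$ respectively) lies strictly between the endpoint orders $s$ and $n+p$, so these are genuine interior interpolations; the displayed exponents are precisely those dictated by \eqref{rescal1a}, and one checks directly that $\theta,\mu\in(0,1)$ under the standing hypotheses $0<s<1$ and $n>2+\shalf s$.

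Multiplying the first inequality by the square of the second gives
\[
\|\nabla\bu\|_\infty\,\|A^{n/2}\bu\|_2^2 \leq c_{n,s}\,\|A^{s/2}\bu\|_\infty^{(1-\theta)+2(1-\mu)}\,\|A^{(n+p)/2}\bu\|_2^{\theta+2\mu}.
\]
The key algebraic check is the identity $\theta = 2(1-\mu)$, which follows at once from the explicit formulae above (equivalently, it is forced by scale invariance): it makes the exponent of $\|A^{s/2}\bu\|_\infty$ equal to $1$ and that of $\|A^{(n+p)/2}\bu\|_2$ equal to $2$, which is exactly \eqref{BKM1} after the identifications of the first paragraph.

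I expect the main obstacle to lie not in the bookkeeping of exponents but in the rigorous justification of the two interpolation inequalities at the $L^\infty$ endpoint. This is precisely where the hypothesis $n>2+\shalf s$ enters: it is equivalent to $n+p>\fivehalves$, so that $A^{(n+p)/2}\bu\in L^2$ places $\bu$ strictly above the Sobolev threshold $\dot H^{5/2}(\mathbb{T}^3)$ for embedding into $\dot C^{1}$ in three dimensions, which is what legitimises controlling $\|\nabla\bu\|_\infty$ through interpolation with $L^2$-based high-order norms. The cleanest way to make both inequalities rigorous is a Littlewood--Paley/Bernstein argument: estimate the low-frequency part of $\nabla\bu$ by $\|A^{s/2}\bu\|_\infty$ and the high-frequency part, via Bernstein's inequality $\|\Delta_j\bu\|_\infty\lesssim 2^{3j/2}\|\Delta_j\bu\|_2$, by $\|A^{(n+p)/2}\bu\|_2$, then optimise over the splitting frequency; the same dyadic decomposition delivers the second inequality.
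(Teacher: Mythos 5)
Your proposal is correct and is essentially the paper's own proof in a slightly repackaged form: your second interpolation is exactly the paper's inequality \eqref{lemp3} (your $1-\mu$ is the paper's $c$), your first is the composition of the paper's Agmon step \eqref{lem1a} with the Gagliardo--Nirenberg step \eqref{lem1c} (your $1-\theta$ equals the paper's product $ab$), and your key identity $\theta=2(1-\mu)$ is precisely the paper's check $ab+2c=1$ in \eqref{lem1h}. The only real difference is that you justify the $L^\infty$-endpoint interpolations by a Littlewood--Paley/Bernstein splitting (correctly identifying $n+p>\fivehalves$, i.e.\ $n>2+\shalf s$, as the condition for the high-frequency sum to converge) rather than by invoking Agmon and Brezis--Mironescu, which is a perfectly standard and rigorous alternative.
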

%%%%%%%%%%%
\begin{proof}
We define $U \coloneqq A^{s/2}\bu$. We also fix $r$ such that $s + \shalf < r < \threehalves$, and by using Agmon's inequality we find 
\begin{equation}\label{lem1a}
\lVert \nabla \bu \rVert_\infty \leq \lVert \nabla \bu \rVert_{\dot{H}^r}^a \lVert \nabla \bu \rVert_{\dot{H}^{n+p-1}}^{1-a} = \lVert U \rVert_{\dot{H}^{r+1-s}}^a \lVert U \rVert_{\dot{H}^{n+p-s}}^{1-a},
\end{equation}
where
\begin{equation}\label{lem1b}
\frac{3}{2} = a r + (1 - a) (n + p -1)\qquad \implies\qquad a = \frac{n + p - \frac{5}{2}}{n + p - r - 1}\,.
\end{equation}
Then, by using the Gagliardo-Nirenberg-Sobolev interpolation inequality (see \cite{Bre2019}) we find
\begin{equation}\label{lem1c}
\lVert U \rVert_{\dot{H}^{r + 1 - s}} \leq C \lVert U \rVert_\infty^b \lVert U \rVert_{\dot{H}^{n+p-s}}^{1-b}\,, 
\end{equation}
with the following relation between the exponents
\begin{equation}\label{lem1d}
\frac{1}{2} = \frac{1 - b}{2} - \frac{(1-b)(n+p-s) - (r + 1 - s)}{3}\,. 
\end{equation}
This implies that
\begin{equation}\label{lem1e}
b \left( \frac{n + p - s}{3} - \frac{1}{2}\right) = \frac{n + p - r - 1}{3}\,.
\end{equation}
Again, by applying a Gagliardo-Nirenberg-Sobolev inequality we obtain
\bel{lemp3}
\|\nabla^{n}\bu\|_{2} = 
\|A^{(n-s)/2}U\|_{2}  = \lVert U \rVert_{\dot{H}^{n-s}}  \leq C\, \|A^{(n+p-s)/2}U\|_{2}^{1-c}\|U\|_{\infty}^{c}
\ee
with the following relation between the exponents
\begin{equation}\label{lem1f}
\frac{1}{2} = \frac{1-c}{2} - \frac{(1-c)(n+p-s)-(n-s)}{3}\,.
\end{equation}
This implies that
\begin{equation}\label{lem1g}
c \left(\frac{n+p-s}{3} - \frac{1}{2}\right) = \frac{p}{3}\,.
\end{equation}
Combining these inequalities gives us
\begin{align}\label{final}
\lVert \nabla \bu \rVert_{\infty}H_{n,1} &\leq C \lVert U \rVert_\infty^{a b + 2 c} \lVert \bu \rVert_{\dot{H}^{n+p}}^{3 - a b - 2 c}.
\end{align}
The proof is completed if we can show that $a b + 2 c = 1$, which is confirmed by
\begin{align}
a b + 2 c &= \frac{n + p - \frac{5}{2}}{n + p - r - 1} \cdot \frac{n + p - r - 1}{n + p - s - \frac{3}{2}} + \frac{2 p}{n + p - s - \frac{3}{2}}\nonumber\\
&= \frac{n + p - \frac{5}{2} + 2 p}{n + p - s - \frac{3}{2}} = 1\,.\label{lem1h}
\end{align}
\end{proof}\noindent
We are now ready to proceed with the proof of Theorem \ref{BKMthm}\,: 
%%%%%%%%%%%
\begin{proof}[Proof of Theorem \ref{BKMthm}]
By a standard interpolation inequality for homogeneous Sobolev spaces we have (for $\third < s < 1$)
\begin{equation}\label{interpolationsobolev}
H_{n+p,1}^{s} \leq H_{n+s,1}^{(1-s)/2}H_{n,1}^{(3s-1)/2}\,.
\end{equation}
Recalling that $p = \shalf (1 - s)$, one can check that
\begin{equation*}
\shalf(1 - s) (n + s) + \shalf n(3 s - 1) = (n + p) s\,.
\end{equation*}
Thus, for $\third < s < 1$ and $2 + \shalf s < n$, by using the ladder of Sobolev norms, as well as inequalities \eqref{BKM1} and \eqref{interpolationsobolev}, we find 
\begingroup
\allowdisplaybreaks
\beq{BKM3a}
\shalf \frac{d}{d t} H_{n,1} &\leq&  - \nu_{s}H_{n+s,1} + c_{n,s} \|\nabla \bu\|_{\infty}H_{n ,1}\nonumber\\
&\leq& - \nu_{s}H_{n+s,1} + c_{n,s} \|A^{s/2} \bu \|_\infty H_{n+p,1}\nonumber \\
&\leq& - \nu_{s}H_{n+s,1} + c_{n,s} \|A^{s/2} \bu \|_{\infty} H_{n+s,1}^{(1-s)/2s}H_{n,1}^{(3s-1)/2s}\nonumber\\
&=&  - \nu_{s}H_{n+s,1} + \left\{\nu_{s}H_{n+s,1}\right\}^{(1-s)/2s}\left\{c_{n,s}\nu_{s}^{-\frac{1-s}{3s-1}}\|A^{s/2} \bu \|_\infty^{2s/(3s-1)}H_{n,1}\right\}^{(3s-1)/2s}\nonumber\\
&\leq& - \nu_{s}H_{n+s,1} + \frac{(1-s)\nu_{s}}{2s}H_{n+s,1} + \left(\frac{3 s -1}{2 s}\right)c_{n,s} \nu_{s}^{-\frac{1-s}{3s-1}} \|A^{s/2}\bu \|_\infty ^{2s/(3s-1)}H_{n,1}\nonumber\\
&\leq&  - \nu_{s}\left(\frac{3s-1}{2s}\right)H_{n+s,1} + \left(\frac{3s - 1}{2s}\right)c_{n,s}\nu_{s}^{-\frac{1-s}{3s-1}}\|A^{s/2} \bu \|_\infty ^{2s/(3s-1)}H_{n,1}\,.\label{BKM3c}
\eeq
\endgroup
In the penultimate line we have used Young's inequality. Note that the constant $c_{n,s}$ may change from line to line. The last line shows why this is a Navier-Stokes and not an Euler result, because of the necessary use of the dissipation term at the last step. Then, by removing the negative $H_{n+s,1}$-term and applying Gronwall's inequality we can write
\bel{BKM4}
H_{n,1}(T) \leq c_{n,s}H_{n,1}(0)\exp\left\{\nu_{s}^{-\frac{1-s}{3s-1}}\int_{0}^{T}\|A^{s/2}\bu\|_{\infty}^{\frac{2s}{3s-1}}\,dt\right\}
\qquad\text{for}\qquad \third < s < 1\,.
\ee
The proof is now finished by contradiction. Let us assume that $\int_{0}^{T^*}\|A^{s/2}\bu\|_{\infty}^{\frac{2s}{3s-1}}\,dt$ is finite. Then $H_{n,1}(T^{*})$ is finite, which contradicts the supposition that regularity is lost at $T^{*}$. Thus the opposite must be true, i.e. the integral must be infinite if regularity is lost at $T^{*}$.
\end{proof}

%%%%%%%%%%%%%%%%%%%%%%%
\section{\large\color{blue} Proof of Theorem \ref{thmenbal}}\label{sect:thm2}

Now we will show that for $s \geq \threequarters$ the Leray-Hopf solutions satisfy an equation of local energy balance. In order to prove Theorem \ref{thmenbal} the following identity is necessary
\bel{fracparts}
\I \left(A^s f\right)g dx = \I \left(A^{s/2} f\right) \left(A^{s/2} g\right)dx\,.
\ee
The proof is similar to that for \eqref{integbyparts} by using the spectral characterisation of the fractional Laplacian as well as the Plancherel identity. First, however, we prove the following Lemma\,:
\begin{lemma} \label{generaltestfunctions}
Let $\bu$ be a Leray-Hopf weak solution of the fractional Navier-Stokes equations. The weak formulation \eqref{weakformulation} still holds for $\psi \in W^{1,1}_{0}\left[(0,T)\,; L^2 (\mathbb{T}^3) \right] \cap L^{1}\left[(0,T)\,; H^3 (\mathbb{T}^3)\right]$.
\end{lemma}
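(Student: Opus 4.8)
The plan is to prove the statement by a density argument: the identity \eqref{weakformulation} holds by definition for every $\psi\in\mathcal{D}\left[\mathbb{T}^3\times[0,T)\right]$, so it suffices to show (i) that each term in \eqref{weakformulation} extends to a functional of $\psi$ that is continuous in the norm of $X:=W^{1,1}_0\left[(0,T);L^2(\mathbb{T}^3)\right]\cap L^1\left[(0,T);H^3(\mathbb{T}^3)\right]$, and (ii) that $\mathcal{D}\left[\mathbb{T}^3\times[0,T)\right]$ is dense in $X$. A structural observation drives everything: a function in $W^{1,1}\left((0,T);L^2\right)$ has an absolutely continuous representative, so $\psi\in C\left([0,T];L^2\right)\subset L^\infty\left[(0,T);L^2\right]$ and the trace $\psi(\cdot,0)\in L^2$ is well defined and depends continuously on $\psi$ in the $X$-norm. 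Thus $\psi$ enjoys $L^\infty$-in-time control in $L^2$ but only $L^1$-in-time control in $H^3$, and the spaces in the statement are tailored precisely so that each term pairs correctly against the Leray-Hopf regularity $\bu\in L^\infty\left[(0,T);L^2\right]\cap L^2\left[(0,T);H^s\right]$.

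For the continuity estimates I would treat the terms in turn. The time-derivative term is controlled by $\int_0^T\|\bu\|_2\,\|\partial_t\psi\|_2\,dt\le\|\bu\|_{L^\infty L^2}\|\partial_t\psi\|_{L^1 L^2}$, and the boundary term by $\|\bu_0\|_2\|\psi(\cdot,0)\|_2$. For the inertial term I would invoke the Sobolev embedding $H^2(\mathbb{T}^3)\hookrightarrow L^\infty(\mathbb{T}^3)$, so that $\nabla\psi\in L^1\left[(0,T);L^\infty\right]$ and $\int_0^T\I|\bu|^2|\nabla\psi|\,dx\,dt\le\|\bu\|_{L^\infty L^2}^2\|\nabla\psi\|_{L^1 L^\infty}$; note this needs only the Leray-Hopf bound, not any $L^3$-in-space hypothesis. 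The viscous term cannot be paired in its symmetric form, since $A^{s/2}\psi$ is merely $L^1$ in time while $A^{s/2}\bu$ is $L^2$ in time; instead I would transfer both half-derivatives onto $\psi$ via the integration-by-parts identity \eqref{fracparts}, rewriting it (first for the smooth approximants, then in the limit) as $\nu\int_0^T\I\bu\,(A^s\psi)\,dx\,dt$. Since $2s<3$ we have $A^s\psi\in L^1\left[(0,T);H^{3-2s}\right]\subset L^1\left[(0,T);L^2\right]$, and this term is bounded by $\|\bu\|_{L^\infty L^2}\|A^s\psi\|_{L^1 L^2}$.

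The pressure term is the one genuine obstacle, and it is where the $L^1$-in-time nature of $\psi$ bites. A naive pairing fails: the only $L^\infty$-in-time bound on the pressure is $P\in L^\infty\left[(0,T);L^1\right]$, and the Calderón-Zygmund representation $P=\sum_{i,j}R_iR_j(u_iu_j)$ is useless on $L^1_x$. The fix is to interpolate on both sides. Interpolating $\psi$ between $L^\infty\left[(0,T);L^2\right]$ and $L^1\left[(0,T);H^3\right]$ gives $\psi\in L^{1/\theta}\left[(0,T);H^{3\theta}\right]$, so for $\theta\in(\fivesixths,1)$ one has $\nabla\cdot\psi\in L^{1/\theta}\left[(0,T);H^{3\theta-1}\right]\subset L^{1/\theta}\left[(0,T);L^\infty\right]$; simultaneously, interpolating $\bu\in L^\infty\left[(0,T);L^2\right]\cap L^2\left[(0,T);L^{6/(3-2s)}\right]$ yields, with $\mu=1-\theta$ small, $\bu\otimes\bu\in L^{1/(1-\theta)}\left[(0,T);L^{3/(3-2(1-\theta)s)}\right]$. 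As the spatial exponent strictly exceeds $1$, the Calderón-Zygmund estimate places $P$ in the same space and hence in $L^{1/(1-\theta)}\left[(0,T);L^1\right]$. Since $1/\theta$ and $1/(1-\theta)$ are conjugate, Hölder in time then gives $\int_0^T\I|P|\,|\nabla\cdot\psi|\,dx\,dt\le\|P\|_{L^{1/(1-\theta)}L^1}\|\nabla\cdot\psi\|_{L^{1/\theta}L^\infty}<\infty$. The spatial exponent exceeds $1$ for every $s>0$, so this closes with no extra restriction on $s$.

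Finally I would establish density in step (ii) by standard mollification: extend $\psi$ by zero past $t=T$ (legitimate as $\psi(\cdot,T)=0$), mollify in time, and convolve with a periodic mollifier in space to produce $\psi_n\in\mathcal{D}\left[\mathbb{T}^3\times[0,T)\right]$ with $\psi_n\to\psi$ in the $X$-norm. Because the trace at $t=0$ is continuous on $W^{1,1}\left((0,T);L^2\right)$, one automatically gets $\psi_n(\cdot,0)\to\psi(\cdot,0)$ in $L^2$, so the boundary term passes to the limit as well. Writing \eqref{weakformulation} for each $\psi_n$ and letting $n\to\infty$ through the continuity estimates above yields the identity for $\psi$, with the viscous term recovered in its symmetric form by the continuity of \eqref{fracparts}. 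I expect the bookkeeping of the interpolation exponents in the pressure term to be the only delicate point; every other passage to the limit is a direct Hölder estimate against the Leray-Hopf bounds.
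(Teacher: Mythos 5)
Your proposal is correct and follows the same route as the paper: approximate $\psi$ by smooth test functions and pass to the limit in each term of \eqref{weakformulation}. The paper's own proof merely asserts the $L^1$ convergences, so your detailed continuity estimates — in particular the time-interpolation treatment of the pressure term, which indeed cannot be closed by a naive H\"older pairing against the Leray--Hopf bounds alone — supply justification the paper leaves implicit; the only quibble is that the symmetric viscous pairing does in fact close without rewriting, since interpolation gives $\psi\in L^2\left[(0,T);H^{3/2}(\mathbb{T}^3)\right]$ and hence $A^{s/2}\psi\in L^2\left[(0,T);L^2(\mathbb{T}^3)\right]$ for $s\le\threehalves$, though your transfer of both half-derivatives onto $\psi$ via \eqref{fracparts} is equally valid in that range.
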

%%%%%%%% 
\begin{proof}
Let us take an arbitrary $\psi \in W^{1,1}_{0}\left[(0,T)\,; L^2 (\mathbb{T}^3)\right] \cap L^{1}\left[(0,T)\,; H^3 (\mathbb{T}^3)\right]$, then there exists a sequence $\{ \psi_n \} \subset \mathcal{D}\left[\mathbb{T}^3 \times (0,T)\right]$ such that $\psi_n \rightarrow \psi$ in $W^{1,1}_{0}\left[ (0,T)\,; L^2 (\mathbb{T}^3\right] \cap L^{1}\left[0,T)\,; H^3 (\mathbb{T}^3)\right]$. First we observe that for any $\psi_n$ equation \eqref{weakformulation} holds because $\psi_n \in \mathcal{D}\left[\mathbb{T}^3 \times (0,T)\right]$.
\par\medskip\noindent
We know that $\bu \partial_t \psi_n \rightarrow \bu \partial_t \psi$ in $L^1\left[(0,T)\,; L^1 (\mathbb{T}^3)\right]$ and therefore
\begin{equation}\label{weakform1}
\int_0^T \I \bu \partial_t \psi_n \, dx dt \xrightarrow[]{n \rightarrow \infty} \int_0^T \I \bu \partial_t \psi \, dx dt\,.
\end{equation}
Similarly, we know that $(A^{s/2} \bu) (A^{s/2} \psi_n) \rightarrow (A^{s/2} \bu) (A^{s/2} \psi)$ , $\bu \otimes \bu : \nabla \psi_n \rightarrow \bu \otimes \bu : \nabla \psi$ and $P  \nabla \cdot \psi_n \rightarrow P \nabla \cdot \psi$, where all the limits converge in 
$L^1\left[\mathbb{T}^3 \times (0,T)\right]$. Therefore we have
\begin{align}
&\int_0^T \I \bigg[- \nu (A^{s/2}\bu)(A^{s/2} \psi_n)  + \bu \otimes\bu : \nabla \psi_n + P \nabla \cdot \psi_n \bigg]\,dxdt\nonumber\\
&\rightarrow \int_0^T \I \bigg[- \nu (A^{s/2}\bu)(A^{s/2} \psi)  + \bu \otimes\bu : \nabla \psi + P  \nabla \cdot \psi \bigg]\,dxdt\,.
\label{weakform2}
\end{align}
We conclude that the weak formulation holds for all $\psi \in W^{1,1}_0\left[(0,T)\,; L^2 (\mathbb{T}^3)\right] \cap L^1\left[(0,T)\,; H^3 (\mathbb{T}^3)\right]$. 
\end{proof}\noindent
Now we prove the following lemma\,:
\begin{lemma} \label{pressuregularity}
Let $s \geq \threequarters$ and let $\bu$ be a Leray-Hopf weak solution of the fractional Navier-Stokes equations. Then 
$\bu \in L^3 \left[\mathbb{T}^3 \times (0,T)\right]$ and $P \in L^{3/2} \left[ \mathbb{T}^3 \times (0,T)\right]$. 
\end{lemma}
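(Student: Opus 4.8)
The plan is to prove the two assertions in turn, first obtaining $\bu \in L^3\left[\mathbb{T}^3 \times (0,T)\right]$ by a space--time interpolation that uses only the Leray--Hopf class $\bu \in L^\infty\left[(0,T);L^2\right] \cap L^2\left[(0,T);H^s\right]$, and then recovering the pressure bound from the elliptic equation for $P$ via Calder\'on--Zygmund theory. Throughout I would record that the energy inequality \eqref{eninequal} gives $\int_0^T H_{s,1}\,dt < \infty$, while the $L^\infty_t L^2$ bound gives $\esssup_t H_{0,1} < \infty$; together these yield $\int_0^T \|\bu\|_{H^s}^2\,dt < \infty$.

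For the velocity, I would apply the Gagliardo--Nirenberg--Sobolev interpolation inequality (as in \cite{Bre2019}) on $\mathbb{T}^3$: for $\theta \in [0,1]$,
\[
\|\bu(\cdot,t)\|_{L^p} \le C\,\|\bu(\cdot,t)\|_{2}^{1-\theta}\,\|\bu(\cdot,t)\|_{H^s}^{\theta}, \qquad \frac{1}{p} = \frac{1}{2} - \frac{\theta s}{3}\,.
\]
The key observation is that the choice $\theta = \frac{2}{3}$ is forced: it is exactly the value for which the time-integrability $3\theta = 2$ matches the available $L^2_t H^s$ control. With $\theta = \frac{2}{3}$ one gets $p = \frac{18}{9-4s}$ and, after cubing and integrating in time,
\[
\int_0^T \|\bu\|_{L^p}^{3}\,dt \le C\,\big(\esssup_{t} H_{0,1}\big)\int_0^T \|\bu\|_{H^s}^{2}\,dt < \infty\,,
\]
so $\bu \in L^3\left[(0,T);L^p(\mathbb{T}^3)\right]$. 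Since $s \ge \threequarters$ is precisely the condition $p \ge 3$, and $\mathbb{T}^3$ has finite measure, the inclusion $L^p \hookrightarrow L^3$ gives $\bu \in L^3\left[\mathbb{T}^3 \times (0,T)\right]$; the endpoint $s=\threequarters$ returns $p=3$ exactly, which is why this is the borderline exponent. (For $s \ge \fivefourths$, indeed for $s \ge \frac94$, the embedding $H^s \hookrightarrow L^\infty$ makes the claim immediate, so one may assume $\threequarters \le s < \frac94$.)

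For the pressure, I would take the divergence of the momentum equation \eqref{nseh} and use $\mbox{div}\,\bu = 0$ to obtain the elliptic relation $-\Delta P = \partial_i \partial_j (u_i u_j)$, whence $P = R_i R_j (u_i u_j)$ with the periodic Riesz transforms $R_i$, after normalizing $\int_{\mathbb{T}^3} P\,dx = 0$. Since these are bounded on $L^q(\mathbb{T}^3)$ for every $1 < q < \infty$, taking $q = \frac{3}{2}$ yields
\[
\|P(\cdot,t)\|_{L^{3/2}} \le C\,\|u_i u_j(\cdot,t)\|_{L^{3/2}} \le C\,\|\bu(\cdot,t)\|_{3}^{2}\,,
\]
and raising to the power $\frac{3}{2}$ and integrating gives $\int_0^T \|P\|_{L^{3/2}}^{3/2}\,dt \le C \int_0^T \|\bu\|_{3}^{3}\,dt < \infty$, i.e. $P \in L^{3/2}\left[\mathbb{T}^3 \times (0,T)\right]$.

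The genuinely substantive step is the interpolation, together with the recognition that $\theta=\frac23$ and $s\ge\threequarters$ conspire to land on $L^3_{t,x}$; the pressure estimate is then a routine Calder\'on--Zygmund argument. The only point requiring care is the rigorous justification of the elliptic equation for $P$ at the level of a weak solution and the fixing of its spatial mean on the torus, which I expect to be the main (though essentially technical) obstacle.
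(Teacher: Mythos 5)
Your argument is correct and follows essentially the same route as the paper: a Gagliardo--Nirenberg interpolation between the Leray--Hopf bounds $L^\infty_t L^2_x$ and $L^2_t H^s_x$ for the velocity (you fix the time exponent and let the spatial exponent $p=\tfrac{18}{9-4s}$ vary, whereas the paper fixes $p=3$ and checks that the resulting time power $\tfrac{3}{4s}$ is $\le 1$; both land on the same threshold $s=\threequarters$), followed by the identical Riesz-transform argument for $P\in L^{3/2}$. The only nit is a harmless exponent slip: cubing $\lVert\bu\rVert_{L^p}\le C\lVert\bu\rVert_2^{1/3}\lVert\bu\rVert_{H^s}^{2/3}$ produces $\bigl(\esssup_t H_{0,1}\bigr)^{1/2}$, not $\esssup_t H_{0,1}$, which does not affect the conclusion.
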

\begin{proof}
The following $3D$ interpolation inequality is useful
\begin{equation} \label{interpolationinequality}
\lVert f \rVert_{L^p} \leq C \lVert f \rVert_{L^q}^\theta \lVert f \rVert_{H^s}^{1-\theta}, \quad \frac{1}{p} = \frac{\theta}{q} + (1 - \theta) \bigg( \frac{1}{2} - \frac{s}{3} \bigg)\,.
\end{equation}
from which we find
\begin{equation}
\lVert \bu \rVert_{L^3} \leq C H_{0,1}^{(2s-1)/4s} H_{s,1}^{1/4s}\,.
\end{equation}
We recall that $\bu \in L^\infty\left[(0,T)\,; L^{2}(\mathbb{T}^3)\right]$ and hence the time integral of any power of the $L^2$ norm is finite. However since $\bu \in L^{2}\left[(0,T)\,; H^s (\mathbb{T}^3)\right]$, in order for $\bu \in L^{3}\left[\mathbb{T}^3 \times (0,T)\right]$ we require
\begin{equation}\label{final1}
\frac{3}{4s} \leq 1 \qquad\implies \qquad s\geq \threequarters\,.
\end{equation}
The pressure satisfies the following equation (in the sense of distributions)
\begin{equation}
- \Delta P = (\nabla \otimes \nabla) : (\bu \otimes \bu)\,,
\end{equation}
and since $\bu \in L^3\left[\mathbb{T}^3 \times (0,T)\right]$, it follows by the boundedness of the Riesz transform (see Appendix B in \cite{RRS}) that $P \in L^{3/2}\left[\mathbb{T}^3 \times (0,T)\right]$, which is what we needed to show. 
\end{proof}
\noindent
\textbf{Proof of Theorem \ref{thmenbal}\,:} We mollify the hypo-dissipative Navier-Stokes equations, multiply by $\psi \bu$ and integrate in time and space to obtain
\begin{equation}\label{mollifiedequation}
\int_0^T \I \psi \bu\cdot\bigg[ \partial_{t}\bu^\epsilon + \nabla \cdot (\bu \otimes \bu)^\epsilon + \nu A^s \bu^\epsilon + \nabla P^\epsilon \bigg]\,dx dt = 0\,.
\end{equation}
We first observe that $\bu^\epsilon \in L^\infty\left[(0,T)\,; C^\infty (\mathbb{T}^3)\right]$. From mollifying the equation we find that 
\begin{equation}\label{mollex1}
\partial_t \bu^\epsilon \in L^2\left[(0,T)\,; C^\infty(\mathbb{T}^3)\right]
\end{equation}
as $\nabla \cdot (\bu \otimes \bu)^\epsilon + \nu A^s \bu^\epsilon + \nabla P^\epsilon$ lies in this space. Hence $\bu^\epsilon \in H^1\left[(0,T)\,; C^\infty (\mathbb{T}^3)\right]$. Therefore, we can apply Lemma \ref{generaltestfunctions} and take  $\bu^\epsilon \psi$ as a test function in the weak formulation \eqref{weakformulation}. Subtracting equation \eqref{mollifiedequation} gives us 
\begin{align}
&\int_{0}^{T} \I \bigg[ \bu \cdot \partial_t ( \bu^\epsilon \psi) - \psi \bu \cdot \partial_t \bu^\epsilon - \nu (A^{s/2} \bu)\cdot A^{s/2} (\bu^\epsilon \psi) - \nu A^{s/2} (\bu \psi) (A^{s/2} \bu^\epsilon) + P \nabla \cdot (\bu^\epsilon \psi)\nonumber\\
&- \psi \bu \cdot \nabla P^\epsilon + \bu \otimes \bu : \nabla (\psi \bu^\epsilon) - \bu \psi \cdot \big( \nabla \cdot (\bu \otimes \bu)^\epsilon \big) \bigg]\,dxdt = 0\,.\label{moll2}
\end{align}
Next we introduce a mollified defect term $D_\epsilon (\bu)$. Noting that $\varphi^{\epsilon}$ is a smooth mollifier, $D_\epsilon (\bu)$ becomes 
\begin{align}
D_\epsilon (\bu) (\bx,\,t) &\coloneqq \int_{\mathbb{R}^3} \nabla \varphi^\epsilon \cdot \delta \bu (\bxi;\,\bx,t) |\delta\bu (\bxi;\,\bx,t)|^2 
d\xi\nonumber\\
&= - \nabla \cdot (|\bu|^2 \bu)^\epsilon + \bu \cdot \nabla (|\bu|^2)^\epsilon 
+ 2 \bu \otimes \nabla : (\bu \otimes \bu)^\epsilon - 2 \bu \otimes \bu : \nabla \bu^\epsilon\, ,\label{moll3}
\end{align}
with $\delta\bu$ defined as in (\ref{increment}). We observe that $D_\epsilon (\bu)$ is well-defined for any $\epsilon > 0$ because of the  assumption that $\bu \in L^3\left[\mathbb{T}^3 \times (0,T)\right]$. Equation \eqref{moll2} can also be rewritten as follows
\begin{align}
&\int_{0}^{T} \I \bigg[ \bu \cdot \bu^\epsilon \partial_t \psi - \nu (A^{s/2} \bu)\cdot A^{s/2} (\bu^\epsilon \psi) - \nu A^{s/2} (\bu \psi) (A^{s/2} \bu^\epsilon) + (\bu^\epsilon P + \bu P^\epsilon) \cdot \nabla \psi\nonumber\\
& - \shalf D_\epsilon (\bu) (\bx,\,t) \psi - \shalf \psi \nabla \cdot (|\bu|^2 \bu)^\epsilon + \shalf \psi \bu \cdot \nabla (|\bu|^2)^\epsilon 
+ (\bu \cdot \bu^\epsilon) \bu \cdot \nabla \psi \bigg]\,dxdt = 0\,,
\end{align}\label{moll4}\noindent
where we have used the incompressiblity when rewriting the pressure terms. As $\epsilon \rightarrow 0$, we observe that we have the following convergence in $L^1\left[\mathbb{T}^3 \times (0,T)\right]$
\begin{align*}
& \bu \cdot \bu^\epsilon \partial_t \psi - \nu (A^{s/2} \bu)\cdot A^{s/2} (\bu^\epsilon \psi) - \nu A^{s/2} (\bu \psi) (A^{s/2} \bu^\epsilon) + (\bu^\epsilon P + \bu P^\epsilon) \cdot \nabla \psi  \\
&\xrightarrow[]{\epsilon \rightarrow 0} \lvert \bu \rvert^2 \partial_t \psi - 2 \nu (A^{s/2}\bu)\cdot A^{s/2} (\bu \psi) 
+ 2 P \bu \cdot \nabla \psi\,.
\end{align*}
In addition we have 
\begin{equation}\label{moll5}
\int_0^T \int_V (\bu \cdot \bu^\epsilon) \bu \cdot \nabla \psi \, dx dt \xrightarrow[]{\epsilon \rightarrow 0} \int_T \int_V \lvert \bu \rvert^2 \bu \cdot \nabla \psi \, dx dt\,,
\end{equation}
as well as (by integrating by parts)
\begin{equation}\label{moll6}
\int_0^T \int_V \bigg[ - \shalf \psi \nabla \cdot (|\bu|^2 \bu)^\epsilon + \shalf \psi \bu \cdot \nabla (|\bu|^2)^\epsilon \bigg] \, dx dt \xrightarrow[]{\epsilon \rightarrow 0} 0\,.
\end{equation}
We can now write the following equation for the defect term
\begin{align}
\shalf D_{\epsilon} (\bu) &= -\partial_t (\bu \cdot \bu^\epsilon) - \nu A^s \bu \cdot \bu^\epsilon - \nu A^s \bu^\epsilon \cdot \bu - \nabla \cdot (\bu^\epsilon P + \bu P^\epsilon) + \shalf \nabla \cdot \big[ ( \lvert \bu \rvert^2 \bu)^\epsilon - \bu (\lvert \bu \rvert^2)^\epsilon \big] \nonumber \\
&- \nabla \cdot \left((\bu \cdot \bu^\epsilon) \bu\right)\,. \label{molldefect}
\end{align}
We note that $A^s \bu \in L^2\left[(0,T)\,; H^{-s} (\mathbb{T}^3)\right]$, then by using the para-differential calculus (see \cite{Bah2011}), it follows that $A^s \bu \cdot \bu \psi \in L^1 \left[(0,T)\,; W^{-s-b,1} (\mathbb{T}^3)\right]$ for some small $b > 0$. By examining equation \eqref{molldefect} we conclude that the right-hand side lies in $W^{-1,1}\left[(0,T)\,; W^{-1-b,1} (\mathbb{T}^3)\right]$ and the limit as $\epsilon \rightarrow 0$ is independent of the choice of mollifier $\varphi_\epsilon$. Therefore $D (\bu) \coloneqq \lim_{\epsilon \rightarrow 0} D_\epsilon (\bu)$ exists as an element in $W^{-1,1}\left[(0,T)\,; W^{-(1 + b),1} (\mathbb{T}^3)\right]$ and is also independent of the choice of mollifier. Alternatively, this can be seen from the following equation
\begin{align}
&\shalf \int_0^T \I D_{\epsilon} (\bu) (\bx, \, t)  \psi \, dxdt = \int_{0}^{T} \I \bigg[ \bu \cdot \bu^\epsilon \partial_t \psi - \nu (A^{s/2} \bu)\cdot A^{s/2} (\bu^\epsilon \psi) - \nu A^{s/2} (\bu \psi) (A^{s/2} \bu^\epsilon)\nonumber\\
&+ (\bu^\epsilon P + \bu P^\epsilon) \cdot \nabla \psi  - \shalf \psi \nabla \cdot (|\bu|^2 \bu)^\epsilon + \shalf \psi \bu \cdot \nabla (|\bu|^2)^\epsilon + (\bu \cdot \bu^\epsilon) \bu \cdot \nabla \psi \bigg]\,dxdt\,.
\end{align}\label{mollex2}
We conclude that in the limit $\epsilon \rightarrow 0$, we obtain the equation of local energy balance
\bel{moll4}
\int_{0}^T \I \bigg[|\bu|^2 \partial_t \psi - 2 \nu (A^{s/2}\bu)\cdot A^{s/2} (\bu \psi)  + 2 P\bu \cdot \nabla \psi - \shalf D (\bu)\psi  + 
|\bu|^{2}\bu \cdot \nabla \psi \bigg]\,dxdt = 0\,,
\ee
as in (\ref{balanceequation}).
\par\smallskip\noindent
\begin{proof}[Proof of Corollary \ref{corenbal}]
By Lemma \ref{pressuregularity} we find that $\bu \in L^3\left[\mathbb{T}^3 \times (0,T)\right]$ if $s \geq \threequarters$. Then the result follows by Theorem \ref{thmenbal}.
\end{proof}
\begin{remark} \label{remslessthreequarters}
If $0 < s < \threequarters$, one needs to make the separate regularity assumption $\bu \in L^3 \left[\mathbb{T}^3 \times (0,T)\right]$, in order to prove that the Leray-Hopf solution satisfies an equation of local energy balance. 
\end{remark}\noindent
We now prove a sufficient condition for the defect term $D (\bu)$ to be zero (i.e. for the energy equality to hold), which is similar to the condition  from Duchon and Robert \cite{DR2000}. In the next theorem we use Besov spaces $B^s_{p,q} (\mathbb{T}^3)$, which are defined in Appendix \ref{fraclapappendix}.
\par\medskip\noindent
\begin{proposition} \label{zerodefect}
Let $\bu \in L^3\left[(0,T)\,; B^{\alpha}_{3,\infty} (\mathbb{T}^3)\right]$ with $\alpha > \third$ be a Leray-Hopf weak solution of the fractional Navier-Stokes equations, then the defect term $D(\bu) = 0$ in $L^1 \left[\mathbb{T}^3 \times (0,T) \right]$. This implies that equation \eqref{balanceequation} is an energy balance\,; i.e. the following holds
\begin{align}
&\int_{0}^{T}\int_{\mathbb{T}^3} \left[ |\bu|^{2} \partial_t \psi - 2 \nu (A^{s/2}\bu)\cdot A^{s/2} (\bu \psi)  + 2 P \bu \cdot \nabla \psi   + |\bu|^{2}\bu \cdot \nabla \psi\right]\,dxdt = 0\,.
\end{align}
\end{proposition}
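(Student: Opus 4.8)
My plan is to follow the Constantin--E--Titi and Duchon--Robert strategy and show directly that the mollified defect $D_\epsilon(\bu)$ of (\ref{moll3}) tends to zero in $L^1[\mathbb{T}^3\times(0,T)]$ under the hypothesis $\bu\in L^3[(0,T);B^{\alpha}_{3,\infty}(\mathbb{T}^3)]$ with $\alpha>\third$. The proof of Theorem \ref{thmenbal} already establishes that $D_\epsilon(\bu)\to D(\bu)$ as $\epsilon\to 0$ (in a negative-regularity space, hence in the sense of distributions). If I can show separately that $D_\epsilon(\bu)\to 0$ in $L^1[\mathbb{T}^3\times(0,T)]$, then uniqueness of distributional limits forces $D(\bu)=0$, and the $L^1$ convergence identifies it as the zero element of $L^1$. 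Substituting $D(\bu)=0$ into the local energy balance (\ref{balanceequation}) then produces the stated energy balance, so the whole proposition reduces to a single quantitative estimate on $D_\epsilon$.

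First I would estimate $D_\epsilon$ pointwise in time. Writing $\delta\bu=\delta\bu(\bxi;\bx,t)$ as in (\ref{increment}) and using the elementary bound $|\nabla\varphi^\epsilon\cdot\delta\bu\,|\delta\bu|^2|\le|\nabla\varphi^\epsilon|\,|\delta\bu|^3$ together with Fubini's theorem, I obtain
\[
\|D_\epsilon(\bu)(\cdot,t)\|_{L^1(\mathbb{T}^3)}\le\int_{\mathbb{R}^3}|\nabla\varphi^\epsilon(\bxi)|\,\|\delta\bu(\bxi;\cdot,t)\|_{L^3(\mathbb{T}^3)}^3\,d\xi.
\]
The crucial ingredient is then the finite-difference characterisation of the Besov norm: for $0<\alpha<1$ one has $\|\delta\bu(\bxi;\cdot,t)\|_{L^3}\le C\,|\bxi|^{\alpha}\,\|\bu(t)\|_{B^{\alpha}_{3,\infty}}$. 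Inserting this, using $\nabla\varphi^\epsilon(\bxi)=\epsilon^{-4}(\nabla\varphi)(\bxi/\epsilon)$ (supported in $|\bxi|\le\epsilon$), and rescaling $\bxi=\epsilon\eta$ converts the $\bxi$-integral into $\epsilon^{3\alpha-1}\int_{|\eta|\le 1}|\nabla\varphi(\eta)|\,|\eta|^{3\alpha}\,d\eta$, so that
\[
\|D_\epsilon(\bu)(\cdot,t)\|_{L^1(\mathbb{T}^3)}\le C\,\epsilon^{3\alpha-1}\,\|\bu(t)\|_{B^{\alpha}_{3,\infty}}^3.
\]

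Finally I would integrate in time against the hypothesis $\bu\in L^3[(0,T);B^{\alpha}_{3,\infty}]$, which yields
\[
\int_0^T\|D_\epsilon(\bu)(\cdot,t)\|_{L^1(\mathbb{T}^3)}\,dt\le C\,\epsilon^{3\alpha-1}\int_0^T\|\bu(t)\|_{B^{\alpha}_{3,\infty}}^3\,dt.
\]
Because $\alpha>\third$ makes the exponent $3\alpha-1$ strictly positive, the right-hand side vanishes as $\epsilon\to 0$, establishing $D_\epsilon\to 0$ in $L^1[\mathbb{T}^3\times(0,T)]$ and hence $D(\bu)=0$. The step I expect to require the most care is the exponent bookkeeping: it is precisely the threshold $3\alpha-1>0$ that reproduces the Onsager exponent $\alpha=\third$, and this is also where the increment characterisation of $B^{\alpha}_{3,\infty}$ (cubing the $L^3$ increment bound to match the cubic structure of the defect) must be invoked correctly. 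The identification of the limit is then the routine uniqueness-of-limits argument noted above.
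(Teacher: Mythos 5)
Your proposal is correct and follows essentially the same route as the paper: bound $|D_\epsilon(\bu)|$ by $\int|\nabla\varphi^\epsilon|\,|\delta\bu|^3\,d\xi$, invoke the finite-difference characterisation of $B^{\alpha}_{3,\infty}$ to extract the factor $|\bxi|^{3\alpha}$, rescale $\bxi=\epsilon z$ to produce $\epsilon^{3\alpha-1}$, and conclude via $3\alpha-1>0$ that $D_\epsilon\to 0$ in $L^1$. Your explicit remark that the limit is identified with $D(\bu)$ by uniqueness of distributional limits is a point the paper leaves implicit, but it is the same argument.
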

\begin{proof}
We make the following estimate
\begin{align}
\int_0^T \I \lvert D_\epsilon (\bu) \rvert \, dx dt &\leq \int_0^T \I \int_{\mathbb{R}^3} \lvert \nabla \varphi^\epsilon (\xi) \rvert \lvert \delta \bu \rvert^3 \, d \xi d x dt\nonumber\\
&\leq \int_0^T \lVert \bu \rVert_{B^\alpha_{3,\infty}}^3 \, dt \int_{\mathbb{R}^3} \lvert \nabla \varphi^\epsilon (\xi) \rvert \lvert \xi \rvert^{3\alpha } \, d\xi\nonumber\\
&= \int_0^T \lVert \bu \rVert_{B^\alpha_{3,\infty}}^3 \, dt \int_{\mathbb{R}^3} \lvert \nabla \varphi (z) \rvert \lvert z \rvert \lvert \epsilon z \rvert^{3\alpha - 1} \, dz\,,\label{defect1}
\end{align}
where in the last line we have made the change of variable $\xi = \epsilon z$. By the dominated convergence theorem it follows that $D_\epsilon (\bu) \xrightarrow[]{\epsilon \rightarrow 0} 0$ in $L^1 \left[ \mathbb{T}^3 \times (0,T) \right]$. 
\end{proof}\noindent
The results are self-consistent as we can recover the energy equality originally found in \cite{JLL1969}. 
\begin{proposition}
Let $\bu$ be a Leray-Hopf solution of the fractional Navier-Stokes equations with $s > \fivefourths$, then $D (\bu) = 0$ and the energy equality holds. 
\end{proposition}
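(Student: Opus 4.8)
The plan is to deduce this from Proposition \ref{zerodefect}: it suffices to show that any Leray-Hopf solution with $s > \fivefourths$ automatically belongs to $L^{3}\left[(0,T)\,; B^{\alpha}_{3,\infty}(\mathbb{T}^3)\right]$ for some $\alpha > \third$, since then $D(\bu) = 0$ and the energy equality follow immediately from that proposition. Thus the whole task reduces to a regularity-interpolation bookkeeping exercise, using only the Leray-Hopf class $\bu \in L^\infty\left[(0,T)\,; L^2(\mathbb{T}^3)\right] \cap L^2\left[(0,T)\,; H^s(\mathbb{T}^3)\right]$ together with the energy inequality \eqref{eninequal}.

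First I would interpolate in space. By the Gagliardo-Nirenberg-Sobolev inequality, for $0 \leq \sigma \leq s$ one has $\|\bu\|_{H^\sigma} \leq C \|\bu\|_{2}^{1 - \sigma/s}\|\bu\|_{\dot{H}^s}^{\sigma/s}$. Raising to the third power, using the uniform bound $\sup_{t}H_{0,1} \leq M$ that is part of the Leray-Hopf regularity, and integrating in time gives
\[
\int_0^T \|\bu\|_{H^\sigma}^3\,dt \leq C\, M^{\frac{3}{2}(1 - \sigma/s)}\int_0^T H_{s,1}^{3\sigma/2s}\,dt\,.
\]
Choosing $\sigma = \frac{2s}{3}$ makes the temporal exponent equal to $1$, so the right-hand side is controlled by $\int_0^T H_{s,1}\,dt$, which is finite by \eqref{eninequal}. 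Hence $\bu \in L^3\left[(0,T)\,; H^{2s/3}(\mathbb{T}^3)\right]$. (As the mean is preserved and may be taken zero, homogeneous and inhomogeneous norms are equivalent here.)

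It then remains only to pass from this $L^2$-based Sobolev space to the Besov space $B^\alpha_{3,\infty}$. Writing $H^{2s/3} = B^{2s/3}_{2,2}$ and invoking the Besov embedding $B^{\sigma}_{2,2}(\mathbb{T}^3) \hookrightarrow B^{\alpha}_{3,\infty}(\mathbb{T}^3)$, valid whenever $\alpha \leq \sigma - \shalf$ (the Sobolev shift $\sigma - \tfrac{3}{2} \geq \alpha - \tfrac{3}{3}$, with the summability index increasing from $2$ to $\infty$), I may take $\alpha = \tfrac{2s}{3} - \shalf$. The condition $\alpha > \third$ is then equivalent to $\tfrac{2s}{3} > \tfrac{5}{6}$, i.e. to $s > \fivefourths$, which is precisely the hypothesis. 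Therefore $\bu \in L^3\left[(0,T)\,; B^\alpha_{3,\infty}\right]$ with $\alpha > \third$, Proposition \ref{zerodefect} applies, and $D(\bu)=0$ together with the energy equality follows.

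The only delicate point I expect is the exact bookkeeping of exponents, since the whole result hinges on the threshold $s = \fivefourths$ being exactly where $\alpha = \tfrac{2s}{3} - \shalf$ crosses $\third$; it is the strictness of $s > \fivefourths$ that furnishes the strict inequality $\alpha > \third$ demanded by Proposition \ref{zerodefect}. Beyond that, one should merely confirm that the chosen Besov embedding holds on the torus with the stated indices (in particular that the summability index may be relaxed from $2$ to $\infty$) and that $\bu \in L^3\left[\mathbb{T}^3 \times (0,T)\right]$ so the defect term is well defined; both are routine consequences of the construction above.
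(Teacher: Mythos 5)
Your proposal is correct and follows essentially the same route as the paper: both interpolate the Leray--Hopf regularity $L^\infty_t L^2 \cap L^2_t H^s$ into $L^3\left[(0,T)\,;B^{\alpha}_{3,\infty}\right]$ with $\alpha>\third$ and then invoke Proposition \ref{zerodefect}, with the same exponent arithmetic yielding the threshold $s>\fivefourths$. The only (cosmetic) difference is that the paper interpolates directly into $W^{\alpha,3}\subset B^{\alpha}_{3,\infty}$ with $\alpha$ taken close to $\third$, whereas you pass through $H^{2s/3}$ and then apply the Sobolev--Besov embedding $B^{2s/3}_{2,2}\hookrightarrow B^{2s/3-1/2}_{3,\infty}$; both are standard and give identical numerology.
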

\begin{proof}
We first observe that $W^{\alpha,3} (\mathbb{T}^3) \subset B^\alpha_{3,\infty} (\mathbb{T}^3)$. By again relying on the Gagliardo-Nirenberg-Sobolev inequality (as stated in \cite{Bre2019}) we find that (for $\alpha + \shalf < s$)
\begin{equation}\label{fivefourths1}
\lVert \bu \rVert_{W^{\alpha,3}} \leq \lVert \bu \rVert_{L^2}^a \lVert \bu \rVert_{H^s}^{1-a}\,, 
\end{equation}
where we have the following relation between the exponents
\begin{equation}\label{fivefourths2}
\frac{1}{3} = \frac{a}{2} + \frac{1-a}{2} - \frac{(1-a)s - \alpha}{3}\qquad\implies\qquad a=\frac{2 s - 1 - 2 \alpha}{2 s}\,.
\end{equation}
Therefore we find the following inequality
\begin{equation*}
\lVert \bu \rVert_{W^{\alpha,3}} \leq \lVert \bu \rVert_{L^2}^{\frac{2 s - 1 - 2 \alpha}{2 s}} 
\lVert \bu \rVert_{H^s}^{\frac{ 1 + 2 \alpha}{2 s}}\,.
\end{equation*}
For $\bu$ to be in $L^3 \left[(0,T)\,; W^{\alpha,3} (\mathbb{T}^3)\right]$, we need
\begin{equation*}
\frac{1 + 2 \alpha}{2 s} \leq \frac{2}{3}\qquad \implies\qquad \frac{3}{4} (1 + 2 \alpha) \leq s\,.
\end{equation*}
Because we can take $\alpha > \third$ arbitrarily close to $\third$, this gives the condition
\begin{equation}\label{fivefourths3}
s > \fivefourths\,.
\end{equation}
Then by Theorem \ref{thmenbal} and Proposition \ref{zerodefect} the result follows. 
\end{proof}

%%%%%%%%%%%%

\section{\large\color{blue}Proof of Theorem \ref{fivesixthsthm}}\label{sect:thm3}

The proof of Theorem \ref{fivesixthsthm} will be split into several parts and follows the method of Foias, Guillopé and Temam \cite{FGT1981} who originally proved the $s = 1$ case. We introduce the following notation.
\bel{notationfivesixths}
\zeta_s = \frac{2s}{3s - 1}\,,\qquad \beta = \frac{3}{2 (n - s)}\,,\qquad 
\rho_1 = 1 + \shalf \beta \zeta_{s}\,,
\qquad
\rho_2 = \shalf \zeta_s (1 - \beta)\,.
\ee
First we will establish a set of a priori estimates.
\begin{proposition} \label{estimatesprop}
Let $\bu$ be a smooth solution of the fractional Navier-Stokes equations with $s > \fivesixths$. Then the following differential inequalities hold\,:
\begingroup
\allowdisplaybreaks
\begin{align}
\shalf \frac{d}{d t} H_{n,1} &\leq - \nu_{s}\zeta_{s}^{-1}H_{n+s,1} + c_{n,s} \zeta_{s}^{-1} \nu_{s}^{1-\zeta_{s}} H_{n,1}^{\rho_{1}}H_{s,1}^{\rho_{2}}\,, \label{apriori1} \\
\shalf \frac{d}{d t} H_{n,1} &\leq - \left(\frac{6s - 5}{4 n}\right) \nu_{s} H_{n+s,1} + \left(\frac{6s - 5}{4 n}\right) c_{n,s} \nu_s^{ \frac{6s - 5 - 4 n }{6 s - 5}} H_{s,1}^{1 + \frac{2}{6 s - 5} n}\,, \label{apriori2}
\end{align}
where for estimate \eqref{apriori1} $n > s + \threehalves$ and $n > 2 + \shalf s$, and for estimate \eqref{apriori2} $n \geq 1$.
\endgroup
\end{proposition}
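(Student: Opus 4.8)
The plan is to derive both inequalities from the energy-estimate ladder \eqref{BKM1a}, namely $\shalf\frac{d}{dt}H_{n,1}\le -\nu_{s}H_{n+s,1}+c_{n,s}\|\nabla\bu\|_{\infty}H_{n,1}$, and to dispose of the nonlinear term in two different ways. The exponents on the right-hand sides are not free: in each case the product of Sobolev norms bounding the nonlinearity must be cubic in $\bu$ (the homogeneity of the advective term), so that the powers of the quadratic quantities $H_{\cdot,1}$ sum to $\threehalves$, and it must carry the same scaling weight under \eqref{rescal1a} as $\frac{d}{dt}H_{n,1}$, since $\nu_{s}$ is scale invariant. Under the pure dilation $\bx\mapsto\lambda\bx$ one has $H_{k,1}\sim\lambda^{3-2k}$ and $\|\nabla\bu\|_{\infty}\sim\lambda^{-1}$; these two linear constraints pin the interpolation exponents down uniquely, and a weighted Young inequality then turns part of the top-order norm into dissipation. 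The only structural difference between \eqref{apriori1} and \eqref{apriori2} is whether one retains a factor $H_{n,1}$ (the Prodi--Serrin/BKM route) or reduces everything to $H_{s,1}$ and $H_{n+s,1}$.

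For \eqref{apriori1} I would run the chain of estimates \eqref{BKM3a}--\eqref{BKM3c} from the proof of Theorem \ref{BKMthm} verbatim to reach
\[
\shalf\frac{d}{dt}H_{n,1}\le -\nu_{s}\zeta_{s}^{-1}H_{n+s,1}+\zeta_{s}^{-1}c_{n,s}\nu_{s}^{1-\zeta_{s}}\|A^{s/2}\bu\|_{\infty}^{\zeta_{s}}H_{n,1},
\]
using the identities $\zeta_{s}^{-1}=(3s-1)/2s$ and $1-\zeta_{s}=-(1-s)/(3s-1)$ to recognise the prefactors in the last line of \eqref{BKM3c}. It then remains only to replace $\|A^{s/2}\bu\|_{\infty}$ by a Gagliardo--Nirenberg--Sobolev interpolation between $\dot H^{s}$ and $\dot H^{n}$,
\[
\|A^{s/2}\bu\|_{\infty}\le c_{n,s}\,\|\bu\|_{\dot H^{n}}^{\beta}\|\bu\|_{\dot H^{s}}^{1-\beta},\qquad \beta=\frac{3}{2(n-s)},
\]
which is where the hypothesis $n>s+\threehalves$ enters: it is exactly the condition $n-s>\threehalves$ ensuring $\dot H^{\,n-s}\hookrightarrow L^{\infty}$ and $\beta\in(0,1)$. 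Raising to the power $\zeta_{s}$, multiplying by $H_{n,1}$ and rewriting $\|\cdot\|_{\dot H^{m}}^{2}=H_{m,1}$ produces $H_{n,1}^{1+\shalf\beta\zeta_{s}}H_{s,1}^{\shalf(1-\beta)\zeta_{s}}=H_{n,1}^{\rho_{1}}H_{s,1}^{\rho_{2}}$ with $\rho_{1},\rho_{2}$ as in \eqref{notationfivesixths}; the second hypothesis $n>2+\shalf s$ is inherited from the Lemma in \S\ref{BKMproof}, which underlies the BKM chain.

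For \eqref{apriori2} I would instead bound the whole nonlinear term directly by the two endpoint norms, $c_{n,s}\|\nabla\bu\|_{\infty}H_{n,1}\le c_{n,s}H_{n+s,1}^{A}H_{s,1}^{B}$. The two constraints above force $A+B=\threehalves$ and $A(3-2n-2s)+B(3-2s)=2-2n$, whose unique solution is $A=(4n+5-6s)/4n$ and $B=(2n+6s-5)/4n$; note $1-A=(6s-5)/4n>0$ precisely because $s>\fivesixths$. Since $A<1$, I apply Young's inequality with conjugate exponents $1/A$ and $1/(1-A)=4n/(6s-5)$ and weight equal to $\nu_{s}$, so that the top-order contribution is $A\nu_{s}H_{n+s,1}$; subtracting it from $\nu_{s}H_{n+s,1}$ leaves $-\frac{6s-5}{4n}\nu_{s}H_{n+s,1}$, while the conjugate term is $(1-A)\,c_{n,s}\nu_{s}^{-A/(1-A)}H_{s,1}^{B/(1-A)}$. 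A short computation gives $-A/(1-A)=(6s-5-4n)/(6s-5)$ and $B/(1-A)=1+2n/(6s-5)$, which is exactly \eqref{apriori2}.

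The main obstacle is establishing the nonlinear interpolation used for \eqref{apriori2} over the full stated range $n\ge1$. The naive route --- interpolating $\|\nabla\bu\|_{\infty}$ between $\dot H^{s}$ and $\dot H^{n+s}$, and $H_{n,1}$ likewise --- needs $\dot H^{\,n+s}\hookrightarrow \dot W^{1,\infty}$, i.e. $n+s>\fivehalves$, which fails for small $n$ with $s$ near $\fivesixths$ (already $n=1$ gives $n+s<\fivehalves$). One must therefore estimate the trilinear quantity $\int_{\mathbb{T}^3}\nabla^{n}(\bu\cdot\nabla\bu)\cdot\nabla^{n}\bu\,dx$ directly, via fractional Leibniz (Kato--Ponce) and commutator estimates, and show it obeys the bound $c_{n,s}H_{n+s,1}^{A}H_{s,1}^{B}$ with the scaling-dictated $A,B$ even below the $L^\infty$ embedding threshold; checking that these estimates, and the constants $c_{n,s}$, remain valid down to $n=1$ is the delicate point. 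For \eqref{apriori1} the analogous care is that the BKM chain rests on the Lemma in \S\ref{BKMproof}, proved for $0<s<1$ (its fixed exponent requires $s+\shalf<\threehalves$); in the part $1\le s<\fivefourths$ relevant to Theorem \ref{fivesixthsthm} one should re-derive the intermediate inequality \eqref{BKM1}, or obtain the differential inequality directly from the Gagliardo--Nirenberg--Sobolev interpolation above.
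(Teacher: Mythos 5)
Your treatment of \eqref{apriori1} coincides with the paper's: the authors run the chain \eqref{BKM3a}--\eqref{BKM3c} and then substitute the Agmon-type interpolation $\|A^{s/2}\bu\|_\infty \le c\, H_{n,1}^{\shalf\beta}H_{s,1}^{\shalf(1-\beta)}$ with $\beta = 3/(2(n-s))$, exactly as you propose, and the hypotheses $n>s+\threehalves$ and $n>2+\shalf s$ enter for the reasons you give. Your exponent bookkeeping for \eqref{apriori2} is also correct, and for $n$ above the embedding threshold your direct Young-inequality route is equivalent to the paper's (which instead inserts $H_{n,1}\le H_{s,1}^{s/n}H_{n+s,1}^{(n-s)/n}$ into \eqref{apriori1} and then applies Young; the final exponents agree because, as you observe, they are pinned by scaling and cubic homogeneity).

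The genuine gap is the one you flag and then defer: the proposition asserts \eqref{apriori2} for all $n\ge1$, and your interpolation $\|\nabla\bu\|_\infty H_{n,1}\le c\,H_{n+s,1}^{A}H_{s,1}^{B}$ is unavailable when $n+s\le\fivehalves$, which happens for $n=1$ throughout the range $\fivesixths<s<\fivefourths$. Saying that one ``must estimate the trilinear quantity directly via Kato--Ponce and commutator estimates'' is not a proof; producing that estimate is the entire content of the low-$n$ case, and the paper closes it with two short, concrete arguments that sidestep your obstruction rather than confront it. For $n=1$ it integrates the trilinear term by parts so that no $L^\infty$ norm appears at all:
\begin{equation*}
\shalf\frac{d}{dt}H_{1,1}\le -\nu_s H_{1+s,1}+c\,\|\bu\|_{W^{1,3}}^3\le -\nu_s H_{1+s,1}+c\,H_{s,1}^{\threehalves(s-\shalf)}H_{1+s,1}^{\threehalves(\threehalves-s)}\,,
\end{equation*}
followed by Young's inequality; note that $\threehalves(\threehalves-s)<1$ precisely when $s>\fivesixths$, and the resulting coefficient $\frac{3}{2}s-\frac{5}{4}$ is $\frac{6s-5}{4n}$ with $n=1$. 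For $n=2$ it keeps $\|\nabla\bu\|_\infty$ but interpolates it between $\dot H^{1+s}$ and $\dot H^{2+s}$ (legitimate since $1+s>\threehalves$), giving $\|\nabla\bu\|_\infty\le c\,H_{1+s,1}^{\shalf(s-\shalf)}H_{2+s,1}^{\shalf(\threehalves-s)}$, and then interpolates $H_{1+s,1}$ and $H_{2,1}$ between $H_{s,1}$ and $H_{2+s,1}$ before applying Young. You need to supply arguments of this kind (or actually carry out the commutator estimate you allude to) before the claim for $n\ge1$ can be considered proved. Your closing remark, that \eqref{BKM1} is proved only for $0<s<1$ while Theorem \ref{fivesixthsthm} needs $s$ up to $\fivefourths$, is a fair observation about the paper itself, but it is independent of the low-$n$ issue, which is the substantive missing step in your write-up.
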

\begin{proof}
We define $w= A^{s/2} \bu$ and let $\beta = \frac{3}{2(n-s)}$. We have
\bel{z1}
\|w\|_{\infty} \leq c\, \| A^{(n-s)/2} w\|_{2}^{\beta}\|w\|_{2}^{1-\beta}\,,
\ee
which can be rewritten as 
\bel{z2}
\|A^{s/2} \bu\|_{\infty} \leq c\, H_{n,1}^{\shalf\beta}H_{s,1}^{\shalf(1-\beta)}\,.
\ee
By using \eqref{z2} in (\ref{BKM3c}) we have
\begin{align}
\shalf \frac{d}{d t} H_{n,1} &\leq - \nu_{s}\left(\frac{3s-1}{2s}\right)H_{n+s,1} + \frac{(3s - 1) c_{n,s}}{2s}\nu_{s}^{-\frac{1-s}{3s-1}} \| A^{s/2} \bu \|_\infty ^{2s/(3s-1)}H_{n,1} \nonumber \\
&\leq - \nu_{s}\zeta_{s}^{-1}H_{n+s,1} + c_{n,s} \zeta_{s}^{-1} \nu_{s}^{1-\zeta_{s}} H_{n,1}^{\rho_{1}}H_{s,1}^{\rho_{2}}\,,\label{ex1}
\end{align}
having used the definition $\zeta_{s} = \frac{2s}{3s-1}$. This proves estimate \eqref{apriori1}.
\par\smallskip\noindent
In order to prove the second inequality, we recall the following interpolation inequality
\begin{equation*}
H_{n,1} \leq H_{s,1}^{\frac{s}{n}} H_{n+s,1}^{\frac{n-s}{n}}\,.
\end{equation*}
Inserting this inequality into \eqref{ex1}, we find that
\begin{align*}
\shalf \frac{d}{d t} H_{n,1} &\leq - \nu_{s}\zeta_{s}^{-1}H_{n+s,1} + c_{n,s} \zeta_{s}^{-1} \nu_{s}^{1-\zeta_{s}} H_{n+s,1}^{\frac{\rho_{1} (n-s)}{n}}H_{s,1}^{\rho_{2} + \rho_1 \frac{s}{n}}\,.
\end{align*}
then by applying Young's inequality we find (where $\chi_{n,s} \coloneqq [(1- \rho_1) n + \rho_1 s]/n = [s - \threequarters \zeta_s]/n$)
\begingroup
\allowdisplaybreaks
\begin{align*}
&\shalf \frac{d}{d t} H_{n,1} \leq - \nu_{s}\zeta_{s}^{-1}H_{n+s,1} + \bigg( \nu_s \zeta_s^{-1} H_{n+s,1}\bigg)^{\frac{\rho_{1} (n-s)}{n}} c_{n,s} \zeta_{s}^{-1 + \frac{\rho_{1} (n-s)}{n}} \nu_{s}^{1-\zeta_{s}-\frac{\rho_{1} (n-s)}{n}} H_{s,1}^{\rho_{2} + \rho_1 \frac{s}{n}} \\
&\leq - \chi_{n,s}\nu_{s}\zeta_{s}^{-1}H_{n+s,1} + \chi_{n,s}\bigg(c_{n,s}\zeta_{s}^{-1+\frac{\rho_{1} (n-s)}{n}}
\nu_{s}^{1-\zeta_{s}-\frac{\rho_{1} (n-s)}{n}} H_{s,1}^{\rho_{2} + \rho_1 \frac{s}{n}} 
\bigg)^{\frac{n}{(1 - \rho_1) n + \rho_1 s}}\\
&\leq - \frac{s - \frac{3}{4} \zeta_s}{n} \nu_{s}\zeta_{s}^{-1}H_{n+s,1} + \frac{s - \frac{3}{4} \zeta_s}{n} \zeta_{s}^{-1} \nu_s \bigg( c_{n,s}  \nu_{s}^{-\zeta_{s}} H_{s,1}^{\frac{1}{n} (s + \frac{1}{2} \zeta_s n - \frac{3}{4} \zeta_s )} \bigg)^{\frac{n}{s - \frac{3}{4} \zeta_s}} \\
&\leq - \left(\frac{6s - 5}{4 n}\right)\nu_{s} H_{n+s,1} + \left(\frac{6s - 5}{4 n}\right)c_{n,s} \nu_s^{1 - \frac{n \zeta_s}{s - \frac{3}{4} \zeta_s}} H_{s,1}^{1 + \frac{\zeta_s}{2 ( s - \frac{3}{4} \zeta_s)} n} \\
&\leq - \left(\frac{6s - 5}{4 n}\right)\nu_{s} H_{n+s,1} + \left(\frac{6s - 5}{4 n}\right)c_{n,s} \nu_s^{ \frac{6s - 5 - 4 n }{6 s - 5}} H_{s,1}^{1 + \frac{2}{6 s - 5} n}\,.
\end{align*}
\endgroup
This completes the proof of estimate \eqref{apriori2} for the case $n > s + \threehalves$. Now we consider the cases $n = 1,2$ separately. If $n = 1$ we have
\begin{align*}
\shalf \frac{d}{d t} H_{1,1} &\leq - \nu_s H_{1+s,1} + c_{n,s} \lVert \bu \rVert_{W^{1,3}}^3 \\
&\leq - \nu_s H_{1+s,1} + c_{n,s} H_{s,1}^{\threehalves (s - \shalf)} H_{1+s,1}^{\threehalves (\threehalves - s)} \\
&\leq - \nu_s \left( \frac{3}{2} s - \frac{5}{4} \right) H_{1+s,1} + \left( \frac{3}{2} s - \frac{5}{4} \right)  c_{n,s} \nu_s^{- \frac{\frac{3}{2}  - s}{s - \fivesixths}} H_{s,1}^{\frac{s - \shalf}{s - \fivesixths}}, 
\end{align*}
where we have used a Gagliardo-Nirenberg interpolation inequality in the second line, and Young's inequality in the third line. This proves estimate \eqref{apriori2} in the case $n = 1$. For $n = 2$ we have
\begin{align*}
\shalf \frac{d}{d t} H_{2,1} &\leq - \nu_s H_{2+s,1} + c_{n,s} \lVert \nabla \bu \rVert_{\infty} H_{2,1} \\
&\leq - \nu_s H_{2+s,1} + c_{n,s} H_{1+s,1}^{\shalf (s - \shalf)} H_{2+s,1}^{\shalf (\threehalves - s)} H_{s,1}^{\shalf s} H_{2+s,1}^{\shalf (2 - s)} \\
&\leq - \nu_s H_{2+s,1} + c_{n,s} H_{s,1}^{\frac{3}{4} s - \frac{1}{8}} H_{2+s,1}^{\frac{13}{8} - \frac{3}{4} s} \\
&\leq - \nu_s \left(\frac{6s - 5}{8}\right) H_{2+s,1} + c_{n,s} \left(\frac{6s - 5}{8}\right) \nu_s^{\frac{6s - 13}{6s - 5}} H_{s,1}^{\frac{6s - 1}{6s - 5}},
\end{align*}
which concludes the proof of estimate \eqref{apriori2}. 
\end{proof}
\begin{proposition} \label{localexistence}
Let $\bu_0 \in H^n (\mathbb{T}^3)$ for $n \geq 1$. Then there exists a unique local-in-time solution $\bu \in L^\infty\left[(0,T)\,; H^n (\mathbb{T}^3)\right] \cap L^2 \left[(0,T)\,; H^{n+s} (\mathbb{T}^3)\right]$ for all $T < t_1 (\bu_0)$ where the existence time $t_1 (\bu_0)$ depends on $\bu_0$ and $\nu$, but is independent of $n$.
\end{proposition}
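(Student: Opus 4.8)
The plan is to build the solution by a Galerkin truncation and to close the scheme with the a priori differential inequalities already established in Proposition~\ref{estimatesprop}, working throughout in the range $\fivesixths < s < \fivefourths$ relevant to Theorem~\ref{fivesixthsthm}. Let $\mathbb{P}$ denote the Leray projection and $\mathbb{P}_N$ the projection onto Fourier modes $|\bk|\le N$; the finite-dimensional system $\partial_t\bu_N + \nu_s A^s\bu_N = -\mathbb{P}_N\mathbb{P}(\bu_N\cdot\nabla\bu_N)$ has a unique smooth local solution by Picard--Lindel\"of. Since $\mathbb{P}_N$ commutes with $A^s$ and is bounded on every $H^m(\mathbb{T}^3)$, the inequalities \eqref{apriori1}--\eqref{apriori2} hold verbatim for $\bu_N$ with constants independent of $N$. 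The whole difficulty is to read off from these a lifespan that does not depend on $n$.

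First I would treat the base level $n=1$. Setting $n=1$ in \eqref{apriori2}, discarding the dissipative term, and using the energy bound $H_{0,1}(t)\le H_{0,1}(0)$ together with the interpolation $H_{s,1}\le H_{1,1}^{s}H_{0,1}^{1-s}$, one obtains a closed Riccati-type inequality $\tfrac{d}{dt}H_{1,1}\le C\,H_{1,1}^{q}$ with exponent $q = s(6s-3)/(6s-5)$; a short computation shows $q>1$ precisely because $6s^2-9s+5>0$ and $6s-5>0$ for $s>\fivesixths$, and $C$ depends only on $\nu$ and $\|\bu_0\|_{L^2}$. Integrating, $H_{1,1}$ stays finite on a maximal interval $[0,t_1)$ with $t_1 = t_1(\|\bu_0\|_{H^1},\nu)$, and re-integrating \eqref{apriori2} with the dissipative term retained yields a uniform $L^2[(0,T);H^{1+s}]$ bound for every $T<t_1$. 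This fixes the lifespan. The propagation to higher $n$ now rests on the observation that the right-hand side of \eqref{apriori2} is \emph{decoupled}, depending only on $H_{s,1}$, which by the previous step is bounded in $L^\infty[(0,T)]$ on $[0,t_1)$ via $H_{s,1}\le H_{1,1}^{s}H_{0,1}^{1-s}$. Hence for any $n\ge1$ integration of \eqref{apriori2} gives
\[
H_{n,1}(t) \le H_{n,1}(0) + c_{n,s}\,\nu_s^{\frac{6s-5-4n}{6s-5}}\int_0^{t} H_{s,1}^{1+\frac{2n}{6s-5}}\,d\tau < \infty \qquad (t<t_1),
\]
with the retained dissipative term furnishing $\bu_N\in L^2[(0,T);H^{n+s}]$; crucially the interval $[0,t_1)$ is inherited from the $n=1$ step and is therefore independent of $n$.

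With these $N$-uniform bounds, the passage to the limit is routine: $\{\bu_N\}$ is bounded in $L^\infty[(0,T);H^n]\cap L^2[(0,T);H^{n+s}]$ and, via the equation, $\{\partial_t\bu_N\}$ is bounded in $L^2[(0,T);H^{-\sigma}]$ for some $\sigma$, so the Aubin--Lions--Simon lemma gives strong convergence in $L^2[(0,T);H^{n}]$ along a subsequence, which is enough to pass to the limit in the quadratic nonlinearity and recover a solution in the asserted class. For uniqueness I would take two such solutions $\bu_1,\bu_2$ with the same data, put $\bw=\bu_1-\bu_2$, and test the difference equation with $\bw$ in $L^2$: the term $\bu_2\cdot\nabla\bw$ vanishes by incompressibility, while $|\langle\bw\cdot\nabla\bu_1,\bw\rangle|$ is dominated by $\tfrac{\nu_s}{2}\|A^{s/2}\bw\|_2^2 + C\,H_{1,1}(\bu_1)\,\|\bw\|_2^2$ through a Gagliardo--Nirenberg interpolation whose absorption into the dissipation is exactly the subcritical condition $s>\threequarters$ (hence valid for $s>\fivesixths$). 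Gronwall then forces $\bw\equiv0$.

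The main obstacle is the honest bookkeeping of the $n$-independence: one must verify that the base lifespan $t_1$ is genuinely controlled by $\|\bu_0\|_{H^1}$ alone and that iterating the decoupled inequality \eqref{apriori2} does not let the $n$-dependent constants $c_{n,s}$ and the powers in \eqref{deltadef} conspire to shrink $[0,t_1)$ as $n$ grows. A secondary technical point, already visible in Proposition~\ref{estimatesprop}, is that the interpolation inequalities behind \eqref{apriori2} degenerate near $n=1,2$ and had to be treated separately there; the same care must be carried into the existence argument so that the constructed solution really lies in $L^\infty[(0,T);H^n]\cap L^2[(0,T);H^{n+s}]$ for every integer $n\ge1$.
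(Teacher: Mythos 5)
Your proposal follows essentially the same strategy as the paper: both rest on the a priori inequalities of Proposition~\ref{estimatesprop}, both obtain the base lifespan $t_1(\bu_0)$ from the $n=1$ estimate, and both exploit the fact that the right-hand side of \eqref{apriori2} depends only on $H_{s,1}$ to conclude that the higher-order norms survive on the same interval (the paper phrases this as a continuation argument showing $t_n(\bu_0)=t_1(\bu_0)$ via estimate \eqref{Hnestimate}; your direct integration of the decoupled inequality is the same mechanism). The only structural difference is the approximation scheme for $n\ge 2$: you use a pure Galerkin truncation throughout, while the paper regularises with an added hyperviscous term $\epsilon A^{5/4}\bu_\epsilon$ and invokes the Lions theory to get smooth approximants; the paper itself remarks that the two routes are interchangeable, so nothing is gained or lost here.

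One concrete point needs repair. Your closure of the $n=1$ Riccati inequality uses the interpolation $H_{s,1}\le H_{1,1}^{s}H_{0,1}^{1-s}$, which is valid only for $0<s\le 1$; for $1<s<\fivefourths$ (a range explicitly covered by Theorem~\ref{fivesixthsthm}) the exponent $1-s$ is negative and $H^{s}$ is no longer intermediate between $L^{2}$ and $H^{1}$, so the stated exponent $q=s(6s-3)/(6s-5)$ and the subsequent positivity check do not apply. The fix is routine but changes the computation: interpolate instead $H_{s,1}\le H_{1,1}^{1/s}H_{1+s,1}^{1-1/s}$ and absorb the resulting power of $H_{1+s,1}$ into the retained dissipative term by Young's inequality, which works precisely because $(s-1)(6s-3)<s(6s-5)$ for $s>\threequarters$. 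A similar case split for $s>1$ appears elsewhere in the paper (e.g.\ in the derivation of \eqref{apriori3}), so this bookkeeping should be carried into your base case as well. Your uniqueness sketch is fine in substance, though the Gronwall weight comes out as a power of $H_{1,1}(\bu_1)$ rather than $H_{1,1}(\bu_1)$ itself; since $H_{1,1}(\bu_1)\in L^\infty(0,T)$ this is immaterial.
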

\begin{proof}
The case $n = 1$ is shown in Theorem \ref{localwellposedness} in \S\ref{localwellposedappendix}. To prove the case $n \geq 2$ we introduce the following perturbed problem (for some $\epsilon > 0$)
\begin{equation*}
\partial_t \bu_\epsilon + \nu A^s \bu_\epsilon + \epsilon A^{5/4} \bu_\epsilon + \bu_\epsilon \cdot \nabla \bu_\epsilon + \nabla P_\epsilon = 0\,,
\end{equation*}
where the subscripts of $\bu$ and $P$ denote a solution of the problem for a given choice of $\epsilon > 0$. By the results in \cite{JLL1969} we know that there exists a unique smooth solution $\bu_\epsilon$ to the problem for any choice $\epsilon > 0$. Moreover, $\bu_\epsilon$ (which is is smooth) satisfies the following rigorous estimates adapted from Proposition \ref{estimatesprop}
\begin{align}
\shalf \frac{d}{d t} H_{n,1} &\leq - \nu_{s}\zeta_{s}^{-1}H_{n+s,1} - \epsilon H_{n+5/4,1} + c_{n,s} \zeta_{s}^{-1} \nu_{s}^{1-\zeta_{s}} H_{n,1}^{\rho_{1}}H_{s,1}^{\rho_{2}}, \\
\shalf \frac{d}{d t} H_{n,1} &\leq - \left(\frac{6s - 5}{4 n}\right)\nu_{s} H_{n+s,1} - \epsilon H_{n+5/4,1} + \left(\frac{6s - 5}{4 n}\right)c_{n,s} \nu_s^{ \frac{6s - 5 - 4 n }{6 s - 5}} H_{s,1}^{1 + \frac{2}{6 s - 5} n}\,.
\end{align}
It follows from these inequalities that there exists a time $t_n (\bu_0)$ such that $\esssup_{t \in [0,T]} H_{n,1} + \int_0^T H_{n+s,1} dt$ is controlled uniformly in $\epsilon$ for any $T < t_n (\bu_0)$. Therefore we can extract a weak-$*$ converging subsequence (which we also call $\{ \bu_\epsilon \}$) converging to a solution 
\begin{equation}
\bu \in L^\infty\left[(0,\,t_n (\bu_0)\,; H^n (\mathbb{T}^3)\right] \cap L^2\left[(0,\,t_n(\bu_0))\,; H^{n+s} (\mathbb{T}^{3} ) \right].
\end{equation}
It follows that $\bu$ must be the unique local strong solution, whose existence and uniqueness was established in Theorem \ref{localwellposedness}. Moreover, $\bu$ satisfies the following estimate
\begin{equation} \label{Hnestimate}
\esssup_{t \in [0,T]} H_{n,1} + \nu \int_0^T H_{n+s,1} dt \lesssim \int_0^T H_{s,1}^{1 + \frac{2}{6 s - 5} n} dt. 
\end{equation}
\noindent
In fact, this implies that $t_n (\bu_0) = t_1 (\bu_0)$ for any $n \geq 1$. This is because for $T < t_1 (\bu_0)$ we have $\bu \in L^\infty \left[(0,T)\,; H^s (\mathbb{T}^3)\right]$. This means that for any $t < t_n (\bu_0)$ we have $H_{n,1}$ is uniformly bounded in time up to $t_n (\bu_0)$. Then by the local existence result that has just been proved, we can extend the solution beyond $t_n (\bu_0)$. This process can be reiterated up to $t_1 (\bu_0)$. Therefore $t_n (\bu_0) = t_1 (\bu_0)$.
\end{proof}\noindent
By following the method of Foias, Guillop\'e and Temam \cite{FGT1981}, we will next show that if $s > \fivesixths$, the set of singular times of a Leray-Hopf weak solution has zero Lebesgue measure. We first recall the idea of a regular time, the set of regular times $\mathcal{R}_n$ for some $n \geq s$ for a given Leray-Hopf solution $\bu$ is defined as follows
\begin{equation}\label{Rn1}
\mathcal{R}_n \coloneqq \left\{ t \in \mathbb{R}_+ \lvert \, \exists \, \epsilon > 0 \text{ such that } \bu \in C\left[(t - \epsilon, t + \epsilon)\,; H^n (\mathbb{T}^3)\right]\right\}.
\end{equation}
We define the set of singular times as follows
\begin{equation}\label{Rn2}
\mathcal{S}_n \coloneqq \left\{ t \in \mathbb{R}_+ \lvert \bu (\cdot, t) \notin H^n (\mathbb{T}^3)\right\}
\end{equation}
and then prove the following result about the Lebesgue measure of the regular times.
\begin{proposition}
Let $\bu$ be a Leray-Hopf solution and $n \in \mathbb{N}$, then $\bu$ is $H^n (\mathbb{T}^3)$ regular for an open subset of $(0,\infty)$, such that $\mathbb{R}_+ \backslash \mathcal{R}_n$ has zero Lebesgue measure.
\end{proposition}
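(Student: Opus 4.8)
The plan is to prove openness directly from the definition and then to show that the complement $\mathbb{R}_+ \setminus \mathcal{R}_n$ is Lebesgue-null by launching strong solutions from almost every time and invoking a density argument. Openness is immediate: if $t \in \mathcal{R}_n$ with $\bu \in C\left[(t-\epsilon,t+\epsilon)\,; H^n(\mathbb{T}^3)\right]$, then for any $t' \in (t-\epsilon,t+\epsilon)$ one may choose $\epsilon'$ with $(t'-\epsilon',t'+\epsilon') \subset (t-\epsilon,t+\epsilon)$, and the restriction of $\bu$ is still continuous into $H^n$, so $t' \in \mathcal{R}_n$; hence $(t-\epsilon,t+\epsilon)\subset \mathcal{R}_n$ and $\mathcal{R}_n$ is open. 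Consequently $\Sigma \coloneqq \mathbb{R}_+ \setminus \mathcal{R}_n$ is closed.

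First I would extract good initial data. Since $\bu$ is a Leray-Hopf solution, the energy inequality \eqref{eninequal} gives $\int_0^\infty H_{s,1}\,dt < \infty$ together with $\bu \in L^\infty\left[(0,\infty)\,; L^2\right]$, so $\bu(\cdot,t_0)\in H^s(\mathbb{T}^3)$ for almost every $t_0$. Write $G \coloneqq \{t_0 \in \mathbb{R}_+ : \bu(\cdot,t_0)\in H^s(\mathbb{T}^3)\}$; then $\lvert \mathbb{R}_+ \setminus G\rvert = 0$. The crucial point is that for $s > \fivesixths$ the space $H^s$ is subcritical for the fractional scaling \eqref{rescal1a}, since the critical index $5/2 - 2s$ satisfies $5/2 - 2s < s$ precisely when $s > \fivesixths$. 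Fixing $t_0 \in G$ and taking $\bu(\cdot,t_0)$ as initial data, the local well-posedness and weak-strong uniqueness established in \S\ref{localwellposedappendix}, together with the bootstrapping of Proposition \ref{localexistence}, produce a unique local strong solution emanating from $t_0$ which regularises instantaneously and lies in $C\left[(t_0,t_0+\tau)\,; H^n(\mathbb{T}^3)\right]$ for some $\tau = \tau(t_0) > 0$; by weak-strong uniqueness it coincides with $\bu$ on $(t_0,t_0+\tau)$. Since every interior point of $(t_0,t_0+\tau)$ then carries a two-sided neighbourhood of $H^n$-continuity, one concludes $(t_0,t_0+\tau) \subset \mathcal{R}_n$, i.e. $(t_0,t_0+\tau)\cap \Sigma = \emptyset$.

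To finish I would run a Lebesgue density argument. Suppose, for contradiction, that $\lvert \Sigma \rvert > 0$. By the Lebesgue density theorem almost every point of $\Sigma$ is a point of density one, and since $\lvert \mathbb{R}_+ \setminus G\rvert = 0$ almost every point of $\Sigma$ also lies in $G$; choose $t_0 \in \Sigma \cap G$ which is a density point of $\Sigma$. By the previous step $(t_0,t_0+\tau)\cap \Sigma = \emptyset$ for some $\tau > 0$, so for every $r < \tau$ one has $\lvert \Sigma \cap (t_0-r,t_0+r)\rvert \leq r$, whence the density of $\Sigma$ at $t_0$ is at most $\tfrac{1}{2}$, contradicting density one. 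Therefore $\lvert \Sigma\rvert = 0$, which together with the openness of $\mathcal{R}_n$ proves the claim.

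The main obstacle is the middle step: converting the almost-everywhere membership $\bu(\cdot,t_0)\in H^s$ into a genuine interval of regularity. This is exactly where the hypothesis $s > \fivesixths$ enters, as it renders $H^s$ subcritical and thereby guarantees a local strong solution with instantaneous smoothing into every $H^n$; the identification of this strong solution with the given Leray-Hopf solution then rests essentially on the weak-strong uniqueness result of \S\ref{localwellposedappendix}. If $s \geq 1$ one may alternatively note $H^s \hookrightarrow H^1$ and launch Proposition \ref{localexistence} directly with $n=1$, but for $\fivesixths < s < 1$ the subcritical well-posedness in $H^s$ is indispensable.
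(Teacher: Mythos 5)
Your overall architecture --- extract $H^s$ data at almost every time, launch a local strong solution, identify it with $\bu$ by weak--strong uniqueness, and conclude by a measure-theoretic argument --- is the same skeleton as the paper's proof, and your openness and Lebesgue-density steps are correct (the density argument is even slightly more than needed: since each $t_0\in\Sigma\cap G$ has a right-neighbourhood $(t_0,t_0+\tau(t_0))$ disjoint from $\Sigma$, these neighbourhoods are pairwise disjoint, so $\Sigma\cap G$ is countable). However, there is a genuine gap in the middle step, and it is precisely the step you yourself flag as ``the main obstacle''. You invoke ``the local well-posedness \dots established in \S\ref{localwellposedappendix}'' to launch a strong solution from data $\bu(\cdot,t_0)\in H^s(\mathbb{T}^3)$, but Theorem \ref{localwellposedness} for $s>\fivesixths$ requires $\bu_0\in H^1(\mathbb{T}^3)$, and for $\fivesixths<s<1$ one has $H^s\supsetneq H^1$, so membership in $H^s$ does \emph{not} put you in the hypotheses of that theorem; likewise Proposition \ref{localexistence} requires integer Sobolev data $H^n$, $n\geq 1$. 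Saying that $H^s$ is subcritical correctly identifies \emph{why} a well-posedness theory in $H^s$ should exist, but it is not a proof of one, and the paper explicitly declines to prove well-posedness in non-integer spaces (see the remark following Theorem \ref{localwellposedness}). The missing ingredient is exactly what the paper supplies: the self-closing a priori estimate \eqref{apriori3}, $\frac12\frac{d}{dt}H_{s,1}+\nu_s\frac{6s-5}{4s}H_{2s,1}\lesssim H_{s,1}^{(8s-5)/(6s-5)}$, which permits an adaptation of the Galerkin or $\epsilon A^{5/4}$-regularisation argument with $H^s$ data alone.

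The second part of the same gap is the claim of \emph{instantaneous} smoothing into $C\left[(t_0,t_0+\tau);H^n\right]$ for every $n$. This is not a one-line consequence of subcriticality: from $H^s$ data the dissipation term only gives $\bu\in L^2\left[(t_0,t_0+\tau);H^{2s}\right]$, hence $\bu(\cdot,t)\in H^{2s}$ for almost every $t$ in that interval, from which one must relaunch to gain the next $s$ derivatives, and so on. Carrying this out carefully is exactly the paper's induction over $m$ using estimates \eqref{apriori3} and \eqref{apriori4} and the countability of the regularity intervals; if you fill in your middle step honestly you will reproduce that induction. (A further shared subtlety, which the paper also glosses over, is that applying weak--strong uniqueness from an interior time $t_0>0$ requires the energy inequality to hold from $t_0$ onwards, i.e.\ the strong energy inequality, which is not part of the definition in \S\ref{LH}.) So: right strategy, correct outer measure-theoretic shell, but the core analytic step is asserted rather than proved, and the results you cite do not cover it as stated.
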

\begin{proof}
First we derive an a priori estimate. Suppose $\bu$ is a smooth solution of the fractional Navier-Stokes equations, then by taking the $L^2 (\mathbb{T}^3)$ inner product with $A^s \bu$ and using several interpolation inequalities, we find that
\begin{align}\label{Rn3}
\frac{d}{dt} H_{s,1} + \nu_s H_{2s,1} &\leq \big\lVert \big[(\bu \cdot \nabla) \bu \big] \cdot A^s \bu \big\rVert_{L^1} \leq \lVert \bu \rVert_{L^\infty} \lVert \nabla \bu \rVert_{L^2} H_{2s,1}^{1/2}\nonumber\\
&\leq H_{s,1}^{\frac{4s - 3}{4s}} H_{2s,1}^{\frac{3-2s}{4s}} H_{s,1}^{\frac{2s - 1}{2s}} H_{2s,1}^{\frac{1-s}{2s}} H_{2s,1}^{1/2} = H_{s,1}^{\frac{8s-5}{4s}} H_{2s,1}^{\frac{5 - 2s}{4s}}\,,
\end{align}
we require 
\begin{equation}\label{Rn4}
\frac{5 - 2s}{4s} < 1\quad\implies\quad s > \fivesixths\,. 
\end{equation}
Therefore we are justified in using Young's inequality to derive the following inequality
\begin{equation}\label{apriori3}
\frac{1}{2} \frac{d}{dt} H_{s,1} + \nu_s \left(\frac{6s - 5}{4s}\right) H_{2s,1} 
\leq \left(\frac{6s - 5}{4s}\right)\nu_s^{\frac{2s - 5}{6s - 5}} H_{s,1}^{\frac{8s-5}{6s-5}}\,. 
\end{equation}
In order to derive estimate \eqref{apriori3} for the case $s > 1$, we observe that
\begin{align}
\frac{d}{dt} H_{s,1} + \nu_s H_{2s,1} &\leq \big\lVert \big[(\bu \cdot \nabla) \bu \big] \cdot A^s \bu \big\rVert_{L^1} \leq \lVert \bu \rVert_{L^{\frac{3}{s-1}}} \lVert \nabla \bu \rVert_{L^{\frac{6}{5 - 2s}}} H_{2s,1}^{1/2} \nonumber \\
&\leq H_{s,1}^{\frac{6s-5}{4s}} H_{2s,1}^{\frac{5-4s}{4s}} H_{s,1}^{1/2} H_{2s,1}^{1/2} = H_{s,1}^{\frac{8s-5}{4s}} H_{2s,1}^{\frac{5 - 2s}{4s}}\,.
\end{align}
Then by applying Young's inequality we find estimate \eqref{apriori3}.
\par\smallskip\noindent
For $m \geq 2$ we derive the following a priori estimate
\begin{align}\label{Rn5}
\shalf\frac{d}{dt} H_{ms,1} + \nu_s H_{(m+1)s,1} &\leq \big\lVert \big[(\bu \cdot \nabla) \bu \big] \cdot A^{ms} \bu \big\rVert_{L^1} \leq H_{(m+1)s,1}^{1/2} \lVert (\bu \cdot \nabla) \bu \rVert_{\dot{H}^{(m-1)s} }.
\end{align}
Then we derive the inequality (by using the para-differential calculus, see Appendix \ref{fraclapappendix} and \cite{Bah2011} for details)
\begin{align}
\lVert (\bu \cdot \nabla) \bu \rVert_{\dot{H}^{(m-1)s}} &\leq \lVert \bu \rVert_{B^{(m-1)s}_{\infty,2}} 
H_{(m-1)s+1,1}^{1/2}\nonumber\\ 
&\leq \lVert \bu \rVert_{B^{(m-1)s + 3/2}_{2,2}} H_{(m-1)s+1,1}^{1/2} \nonumber \\
&\leq H_{ms,1}^{\frac{4s - 3}{4s}} H_{(m+1)s,1}^{\frac{3-2s}{4s}} H_{ms,1}^{\frac{2s - 1}{2s}} 
H_{(m+1)s,1}^{\frac{1-s}{2s}}\nonumber\\
&= H_{ms,1}^{\frac{8s-5}{4s}} H_{(m+1)s,1}^{\frac{5 - 4s}{4s}}\,.\label{Rn6}
\end{align}
Therefore we can conclude that
\begin{align}\label{Rn7}
\shalf\frac{d}{dt} H_{ms,1} + \nu_s H_{(m+1)s,1} &\leq H_{ms,1}^{\frac{8s-5}{4s}} H_{(m+1)s,1}^{\frac{5 - 2s}{4s}},
\end{align}
which implies 
\begin{equation}\label{apriori4}
\frac{1}{2} \frac{d}{dt} H_{ms,1} + \nu_s \left(\frac{6s - 5}{4s}\right)H_{(m+1)s,1} 
\leq \left(\frac{6s - 5}{4s}\right)\nu_s^{\frac{2s - 5}{6s - 5}} H_{ms,1}^{\frac{8s-5}{6s-5}}\,.
\end{equation}
Now we will show that $\mathcal{R}_n$ has full measure by induction. We first observe that by the energy inequality we have
\begin{equation}\label{Rn8}
\esssup_{t \in [0,\infty)} H_{0,1} + 2 \nu_s \int_0^{\infty} H_{s,1} dt \leq \lVert \bu_0 \rVert_2^2 \,. 
\end{equation}
This means that $H_{s,1}$ must be finite for almost all times and hence $\mathcal{R}_s$ has full measure (as the number of endpoints of disjoint intervals is countable). Now we proceed by induction and suppose we know that the sets $\mathcal{R}_{ms}$ have full measure for $1 \leq m \leq n$. 
\par\smallskip\noindent
We consider an $H^{ns} (\mathbb{T}^3)$ regularity interval $(t_l, t_r)$. By using the a priori estimate \eqref{apriori4} for $m = n$ and an adaption of the proof of Proposition \ref{localexistence} there exists a strong solution coinciding with the weak solution on this time interval (by weak-strong uniqueness as stated in \S\ref{localwellposedappendix}). For any $[t_0,t_1] \subset (t_l, t_r)$ this strong solution satisfies
\begin{equation*}
\esssup_{t \in [t_0,t_1]}H_{ns,1} + 2 \nu_s \left(\frac{6s - 5}{4 s}\right) \int_{t_0}^{t_1} H_{(n+1)s,1} dt \leq \shalf H_{ns,1} (t_0) 
+ \left(\frac{6s - 5}{4s}\right)\nu_s^{\frac{2s - 5}{6s - 5}} \int_{t_0}^{t_1} H_{ns,1}^{\frac{8s-5}{6s-5}} dt.
\end{equation*}
It follows that $H_{(n+1)s,1}$ is finite for almost all times in $(t_l, t_r)$. As this is true for any regularity interval, we conclude that $\mathcal{R}_{(n+1)s}$ has full measure. Therefore the result follows by induction. 
\end{proof}
\par\smallskip\noindent
Now we are ready to prove Theorem \ref{fivesixthsthm}.
\begin{proof}[Proof of Theorem \ref{fivesixthsthm}]
For any $n \geq 1$ there is a countable number of regularity intervals for the $H^n (\mathbb{T}^3)$ norm. In this proof we will work with integrals on the time interval $[0,T]$, which should be split into an (infinite) sum over the regularity intervals, which we will not write down explicitly. We first derive a new energy estimate. We observe that for $m \geq 2$
\begin{align*}
\shalf\frac{d}{dt} H_{ms,1} + \nu_s H_{(m+1)s,1} &= - \I \big[(\bu \cdot \nabla) \bu \big] \cdot A^{ms} \bu \, dx \\
&= - \I \left[ A^{ms/2} \big[(\bu \cdot \nabla) \bu \big] - \big[(\bu \cdot \nabla) A^{ms/2} \bu \big]  \right] \cdot A^{ms/2} \bu \, dx.
\end{align*}
Then by applying Lemma \ref{commutatorlemma} in Appendix \ref{commutatorappendix} we find that
\begin{align*}
&\shalf\frac{d}{dt} H_{ms,1} + \nu_s H_{(m+1)s,1} \leq \left\lVert A^{ms/2} \big[(\bu \cdot \nabla) \bu \big] - \big[(\bu \cdot \nabla) A^{ms/2} \bu \big] \right\rVert_{L^2} H_{ms,1}^{1/2} \\
&\leq H_{1,1}^{\frac{2 (m+1) s - 5}{4 (m + 1)s - 4}} H_{(m+1)s,1}^{\frac{3}{4 (m + 1)s - 4}} H_{ms,1} \leq H_{s,1}^{\frac{2 (m+1) s - 5}{4 m s}} H_{(m+1)s,1}^{\frac{(2 (m+1) s - 5)(1-s)}{(4 (m + 1)s - 4) ms}} H_{(m+1)s,1}^{\frac{3}{4 (m + 1)s - 4}} H_{ms,1} \\
&\leq H_{s,1}^{\frac{2 (m+1) s - 5}{4 m s}} H_{(m+1)s,1}^{\frac{5 - 2s}{4 m s}} H_{ms,1} \leq H_{s,1}^{\frac{2 (m+2) s - 5}{4 m s}} H_{(m+1)s,1}^{\frac{2 (m - 2) s + 5 }{4 m s}} H_{ms,1}^{1/2} \\
&\leq \frac{2 (m - 2) s + 5}{4 m s} \nu_s H_{(m+1)s,1} + c_{m,s} H_{s,1} H_{ms,1}^{\frac{2 m s}{2 (m + 2) s - 5}}.
\end{align*}
We note that if $s > 1$ some of the intermediate steps given above are not needed, as Lemma \ref{commutatorlemma} can be applied directly and it results in the same final inequality. Rearranging the obtained inequality then yields
\begin{equation}
\shalf\frac{d}{dt} H_{ms,1} + \frac{2 (m+2) s - 5}{4 m s} \nu_s H_{(m+1)s,1} \leq c_{m,s} H_{s,1} H_{ms,1}^{\frac{2 m s}{2 (m + 2) s - 5}}. \label{Hmsestimate}
\end{equation}
We have now proved this estimate for $m \geq 2$, but we recall that the case $m = 1$ was already given in equation \eqref{apriori3}. Therefore we can use this estimate for all $m \geq 1$. We observe that this a priori estimate is rigorous inside a regularity interval, and we will sum over regularity intervals at a later stage.
\par\smallskip\noindent
Then by dividing in equation \eqref{Hmsestimate} by $(1 + H_{ms,1})^{\frac{2 m s}{2 (m + 2) s - 5}}$ and integrating in time (where $(t_l,t_r)$ is a single regularity interval) we find that (for any $\epsilon > 0$)
\begin{align*}
\frac{1}{2} \int_{t_l+\epsilon}^{t_r-\epsilon} \frac{\frac{d}{dt} H_{ms,1}}{(1 + H_{ms,1})^{\frac{2 m s}{2 (m + 2) s - 5}}} \, dt + \frac{2 (m+1) s - 5}{4 m s} \nu_s \int_{t_l+\epsilon}^{t_r-\epsilon} \frac{H_{(m+1)s,1}}{(1 + H_{ms,1})^{\frac{2 m s}{2 (m + 2) s - 5}}} \, dt \leq c_{m,s} \int_{t_l+\epsilon}^{t_r-\epsilon} H_{s,1} \, dt.
\end{align*}
We evaluate the first integral as follows
\begin{align*}
\frac{1}{2} \int_{t_l+\epsilon}^{t_r-\epsilon} \frac{\frac{d}{dt} H_{ms,1}}{(1 + H_{ms,1})^{\frac{2 m s}{2 (m + 2) s - 5}}} \, dt &= \frac{2 (m + 2) s - 5}{10 - 8 s} \bigg[ \frac{1}{(1 + H_{ms,1} (t_l + \epsilon))^{\frac{5 - 4 s}{2 (m + 2) s - 5}}}  \\
&- \frac{1}{(1 + H_{ms,1} (t_r - \epsilon))^{\frac{5 - 4 s}{2 (m + 2) s - 5}}} \bigg].
\end{align*}
Combining these two expressions then gives
\begin{align*}
 \frac{2 (m+1) s - 5}{4 m s} \nu_s \int_{t_l+\epsilon}^{t_r-\epsilon} \frac{H_{(m+1)s,1}}{(1 + H_{ms,1})^{\frac{2 m s}{2 (m + 2) s - 5}}} \, dt &\leq c_{m,s} \int_{t_l+\epsilon}^{t_r-\epsilon} H_{s,1} \, dt + \left(\frac{2 (m + 2) s - 5}{10 - 8 s}\right) \\
&\times \frac{1}{(1 + H_{ms,1} (t_r - \epsilon))^{\frac{5 - 4 s}{2 (m + 2) s - 5}}}.
\end{align*}
Then taking the limit $\epsilon \rightarrow 0$ then yields (recalling that $\lim_{t \rightarrow t_r} H_{ms} (t) = \infty$)
\begin{equation}
\frac{2 (m+1) s - 5}{4 m s} \nu_s \int_{t_l}^{t_r} \frac{H_{(m+1)s,1}}{(1 + H_{ms,1})^{\frac{2 m s}{2 (m + 2) s - 5}}} \, dt \leq c_{m,s} \int_{t_l}^{t_r} H_{s,1} \, dt.
\end{equation}
Summing over the (at most) countable number of regularity intervals then results in the following bound
\begin{equation} \label{Hmsratiobound}
\frac{2 (m+1) s - 5}{4 m s} \nu_s \int_{0}^{T} \frac{H_{(m+1)s,1}}{(1 + H_{ms,1})^{\frac{2 m s}{2 (m + 2) s - 5}}} \, dt \leq c_{m,s} \int_{0}^{T} H_{s,1} \, dt < \infty,
\end{equation}
due to the energy inequality. We observe here that the estimation of integral in time of the term involving $\frac{d}{dt} H_{ms,1}$ which was given above, does not carry over verbatim to the case $s \geq \fivefourths$. It is possible to employ a different estimate here. However as mentioned before, we will not do so here. This is because if $s \geq \fivefourths$ the equations are known to be globally well-posed and hence a bound on a hierarchy of time averages does not yield anything new.
\par\smallskip\noindent
Now we will proceed to prove the regularity bounds by induction. From the energy inequality we know that $H_{s,1} \in L^1 (0,T)$. Now suppose that $H_{ms,1} \in L^{\gamma_m} (0,T)$, where $0 < \gamma_m \leq 1$ is a for now undetermined constant. Then one finds that 
\begin{align*}
\int_0^T H_{(m+1)s,1}^{\gamma_{m+1}} \, dt &= \int_0^T \frac{H_{(m+1)s,1}^{\gamma_{m+1}}}{(1 + H_{ms,1})^{\frac{2 m s \gamma_{m+1}}{2 (m + 2) s - 5}}} (1 + H_{ms,1})^{\frac{2 m s \gamma_{m+1}}{2 (m + 2) s - 5}} \, dt \\
&\leq \bigg( \int_0^T \frac{H_{(m+1)s,1}}{(1 + H_{ms,1})^{\frac{2 m s}{2 (m + 2) s - 5}}} \, dt \bigg)^{\gamma_{m+1}} \bigg( \int_0^T (1 + H_{ms,1})^{\frac{\gamma_{m+1}}{1 - \gamma_{m+1}} \frac{2 m s}{2 (m + 2) s - 5} } \, dt \bigg)^{1 - \gamma_{m+1}}\,,
\end{align*}
where in the second line we have used Hölder's inequality with $p = \frac{1}{\gamma_{m+1}}$ and $p' = \frac{1}{1 - \gamma_{m+1}}$. The first integral on the second line is bounded due to inequality \eqref{Hmsratiobound}. In order to be able to use the inductive hypothesis (i.e. the fact that $H_{ms,1} \in L^{\gamma_m} (0,T)$), the following inequality needs to hold
\begin{equation}
\frac{\gamma_{m+1}}{1 - \gamma_{m+1}} \frac{2 m s}{2 (m + 2) s - 5} \leq \gamma_m.
\end{equation}
Combined with $\gamma_1 = 1$, we find that
\begin{equation}
\gamma_m = \frac{6s - 5}{2 (m+2) s - 5}. \label{gammaexponent}
\end{equation}
Thus by induction we have shown that $H_{ms,1} \in L^{\gamma_m} (0,T)$ with the $\gamma_m$ given in equation \eqref{gammaexponent}. Finally, by using an interpolation inequality we obtain that (for some for now undetermined $0 < \delta_{n,s} < 1$)
\begin{align*}
\int_0^T H_{n,1}^{\delta_{n,s}} \, dt &\leq \int_0^T H_{s,1}^{\frac{\delta_{n,s} ( (2n+1) s - n)}{2 n s}} H_{(2n+1)s,1}^{\frac{\delta_{n,s}(n-s)}{2ns}} \, dt \leq \left( \int_0^T H_{s,1} \, dt \right)^{\frac{\delta_{n,s} ( (2n+1)s - n)}{2 n s}} \\
&\times \left( \int_0^T  H_{(2n+1)s,1}^{\frac{\delta_{n,s}(n-s)}{2ns - \delta_{n,s} ( (2n+1)s - n)}} \, dt \right)^{\frac{2ns - \delta_{n,s} ( (2n+1)s - n)}{2 n s}} \,.
\end{align*}
The third integral on the first line is finite due to the energy inequality. The integral on the second line is bounded if (because of the bounds we just established)
\begin{equation*}
\frac{\delta_{n,s}(n-s)}{2ns - \delta_{n,s} ( (2n+1)s - n)} \leq \gamma_{2n+1}.
\end{equation*}
Then we can compute the constants $\delta_{n,s}$ to be
\begin{equation}
\delta_{n,s} = \frac{6s - 5}{2n+ 4s - 5}\,,
\end{equation}
which agrees with the definition in (\ref{deltadef}). We note that this expression reduces to $\delta_{n,1} = \frac{1}{2 n - 1}$ when $s = 1$, which agrees with the result of Foias, Guillopé and Temam \cite{FGT1981}. Thus we have proved the regularity stated in Theorem \ref{fivesixthsthm}. 
\end{proof}

%%%%%%%%%%%%%%%

\section{\large\color{blue} Local well-posedness of the fractional Navier-Stokes equations} \label{localwellposedappendix}

In this section we provide a self-contained proof of the local well-posedness of the fractional Navier-Stokes equations, as well as a weak-strong uniqueness result. These results were used in the proofs in the previous sections and appear to be absent in the literature\,: see \cite{Wu2003,Rosa2019} for proofs of related local well-posedness results.
\begin{theorem}\label{localwellposedness}
Consider the fractional Navier-Stokes equations \eqref{nseh} with $s$ as the power of the fractional Laplacian. We consider three cases:
\begin{itemize}
    \item If $s > \fivesixths$ and $\bu_0 \in H^1 (\mathbb{T}^3)$, then there exists a unique local strong solution $\bu \in L^\infty\left[(0,T)\,; H^{1} (\mathbb{T}^3)\right] \cap L^2 \left[(0,T)\,; H^{1+s}\right]$.
    \item If $\third < s \leq \fivesixths$ and $\bu_0 \in H^2 (\mathbb{T}^3)$, then there is a unique local strong solution of the fractional Navier-Stokes equations with regularity $L^\infty\left[(0,T)\,; H^{2} (\mathbb{T}^3)\right] \cap L^2 \left[(0,T)\,; H^{2+s}\right]$.
    \item For $0 < s \leq \third$ and initial data $\bu_0 \in H^3 (\mathbb{T}^3)$, there exists a unique local strong solution in $L^\infty\left[(0,T)\,; H^{3} (\mathbb{T}^3)\right] \cap L^2 \left[(0,T)\,; H^{3+s}\right]$.
\end{itemize}
\end{theorem}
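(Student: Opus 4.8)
The plan is to handle all three regularity classes by a single three-step scheme—construct approximate solutions, establish a closed a priori estimate yielding a short existence time that depends only on $\|\bu_0\|_{H^n}$ and $\nu$, and then pass to the limit—with the choice $n\in\{1,2,3\}$ dictated by the strength of the dissipation. First I would set up a Galerkin (Fourier truncation) scheme $\bu_N = P_N\bu$, for which the projected system is a locally Lipschitz ODE on a finite-dimensional space and hence has a unique smooth solution on a maximal interval. The heart of the argument is a differential inequality for $H_{n,1}$ obtained by taking the $L^2$ inner product of the truncated equation with $A^{n}\bu_N$; incompressibility removes the pressure, leaving the dissipation $-2\nu_{s}H_{n+s,1}$ against the single nonlinear term $\int_{\mathbb{T}^3}(\bu_N\cdot\nabla)\bu_N\cdot A^{n}\bu_N\,dx$.

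The key step is to bound this nonlinear term, for the appropriate $n$, by $\epsilon\,\nu_{s}H_{n+s,1} + C(\nu_{s})\,H_{n,1}^{q}$ with $q>1$. I would do this by splitting the derivatives symmetrically, $\lesssim \|(\bu\cdot\nabla)\bu\|_{\dot{H}^{n-s}}\|\bu\|_{\dot{H}^{n+s}}$, estimating the product norm by a fractional Leibniz / para-differential inequality and then interpolating the intermediate Sobolev norm between $\dot{H}^{n}$ and $\dot{H}^{n+s}$ (the same mechanism underlying estimates \eqref{apriori1}–\eqref{apriori3} in Proposition \ref{estimatesprop}). Young's inequality then absorbs the top-order factor into the dissipation, leaving a Bernoulli-type inequality $\tfrac{d}{dt}H_{n,1}\le C\,H_{n,1}^{q}$ whose comparison solution stays finite on an interval $[0,t_1(\bu_0))$ independent of $N$. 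The thresholds $s=\fivesixths$ and $s=\third$ are precisely where the interpolation exponent on $H_{n+s,1}$ reaches $1$, i.e. where the coefficients $6s-5$ and $3s-1$ of \eqref{apriori2} and \eqref{apriori1} vanish: the estimate closes at $n=1$ only for $s>\fivesixths$, at $n=2$ down to $s>\third$, and at $n=3$ for every $s>0$, which is why progressively smoother data is required as the dissipation weakens.

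With a uniform bound on $\|\bu_N\|_{L^\infty_t H^n}+\|\bu_N\|_{L^2_t H^{n+s}}$ in hand, existence follows by extracting a weak-$*$ limit and invoking Aubin–Lions compactness (the equation furnishes a uniform bound on $\partial_t\bu_N$ in a negative-order space) to pass to the limit in the nonlinearity. For uniqueness I would take two solutions $\bu_1,\bu_2$ with the same data, set $\bw=\bu_1-\bu_2$, and run the $L^2$ energy estimate $\tfrac12\tfrac{d}{dt}\|\bw\|_2^2+\nu_{s}\|\bw\|_{\dot{H}^{s}}^2 = -\int_{\mathbb{T}^3}(\bw\cdot\nabla)\bu_2\cdot\bw\,dx$, and close by Grönwall's inequality.

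The main obstacle I anticipate is the low-dissipation regime (case $n=3$, $0<s\le\third$), and it appears in two places. First, closing the a priori estimate: since $A^{s}$ provides almost no smoothing, the interpolation budget between $\dot{H}^{n}$ and $\dot{H}^{n+s}$ is thin, so the para-differential product estimate must be balanced carefully to keep the high-order exponent strictly below $1$—this is exactly what forces the choice $n=3$. Second, uniqueness when $\nabla\bu_2\notin L^\infty_t$ (which can already fail in the $H^2$-case when $s\le\shalf$, and is most delicate for small $s$): there $\|\nabla\bu_2\|_\infty$ cannot be used directly, and the term $\int_{\mathbb{T}^3}(\bw\cdot\nabla)\bu_2\cdot\bw$ must instead be controlled by interpolating $\bw$ between $L^2$ and $\dot{H}^{s}$ and absorbing the $\dot{H}^{s}$-factor into the dissipation, which again bounds how small $s$ may be.
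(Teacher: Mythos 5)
Your proposal follows essentially the same route as the paper: Galerkin truncation, a closed differential inequality for $H_{n,1}$ obtained by testing with $A^{n}\bu_N$ and absorbing the top-order factor into the dissipation via interpolation and Young's inequality, then extraction of a weak-$*$ limit, with $n$ increasing as $s$ decreases. For $s>\fivesixths$ the paper simply reuses estimate \eqref{apriori2} with $n=1$, and for $\third<s\leq\fivesixths$ it runs exactly the Hölder--Gagliardo--Nirenberg--Young computation you describe (via $\lVert\Delta\bu^N\rVert_{L^{5/2}}^2\lVert\nabla\bu^N\rVert_{L^5}$ rather than a para-product at the level of $\dot{H}^{n-s}$, but the mechanism is identical). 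The one genuine divergence is the third case: the paper does not prove $0<s\leq\third$ at all, instead citing De Rosa \cite[Theorem 3.4]{Rosa2019}, whereas you propose to run the same scheme at $n=3$; this works, and in fact is the easiest case, since $H^{2}(\mathbb{T}^3)\hookrightarrow L^\infty$ gives $\lVert\nabla\bu\rVert_\infty\lesssim H_{3,1}^{1/2}$ and a Riccati inequality $\frac{d}{dt}H_{3,1}\lesssim H_{3,1}^{3/2}$ without even using the dissipation, and likewise makes uniqueness immediate there (contrary to your remark that uniqueness is most delicate for small $s$ --- the delicate case is the $H^2$ range $\third<s<\shalf$, where one interpolates exactly as in the paper's weak-strong uniqueness proof). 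One further small inaccuracy: the threshold $s=\third$ for the $n=2$ case is not where the interpolation exponent reaches $1$; the paper's $n=2$ estimate closes for all $s>\frac{3}{10}$, and the value $\third$ in the statement is inherited from how the paper partitions the ranges of $s$, not forced by this estimate. Neither point affects the validity of your argument.
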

\begin{proof}
We will not deal with the case $0 < s \leq \third$, which is given in \cite[~Theorem 3.4]{Rosa2019}. In order to prove the other two cases, we first apply the Galerkin projection $P_N$ to the equations
\begin{equation}\label{A1}
\partial_t \bu^N + \nu A^s \bu^N + P_N ( (\bu^N \cdot \nabla) \bu^N) = 0.
\end{equation}
For every finite $N$, we know that there exists a unique smooth solution $\bu^N$ to these equations. If $s > \fivesixths$, the Galerkin approximations will satisfy estimate \eqref{apriori2} where we take $n = 1$. This means that there is a time $t_1 (\bu_0)$ such that there exists a sub-sequence of $\{ \bu^N \}$ converging weak-* in $L^\infty \left[ (0,T) \, ; H^1 (\mathbb{T}^3) \right]$ and weakly in $L^2 \left[ (0,T) \, ; H^{1+s} (\mathbb{T}^3) \right]$ to a strong solution $\bu$.
\par\smallskip\noindent
For the case $\third < s \leq \fivesixths$, by performing a standard energy estimate one finds
\begin{equation}\label{A2}
\shalf\frac{d}{dt} \lVert \bu^N \rVert_{H^{2}}^2 \leq -\nu \lVert \bu^N \rVert_{H^{2+s}}^2 + c_{n,s} \lVert \Delta \bu^N \rVert_{L^{5/2}}^2 \lVert \nabla \bu^N \rVert_{L^5} \,.
\end{equation}
We then recall the following interpolation inequality
\begin{equation}\label{A3}
\lVert \Delta \bu^N \rVert_{L^{5/2}} \leq c \lVert \Delta \bu^N \rVert_{L^2}^{1 - 3/(10s)} \lVert \bu^N \rVert_{H^{2+s}}^{3/(10s)} \,.
\end{equation}
By using Young's inequality we find
\begin{align}\label{A4}
\frac{1}{2} \frac{d}{dt} \lVert \bu^N \rVert_{H^{2}}^2 &\leq -\nu \lVert \bu^N \rVert_{H^{2+s}}^2 + c_{n,s} \lVert \Delta \bu^N \rVert_{L^2}^{3 - 3/(5s)} \lVert \bu^N \rVert_{H^{2+s}}^{3/(5s)} \\
&\leq - \left(\frac{10 s - 3}{10s}\right)\nu\lVert \bu^N \rVert_{H^{2+s}}^2 + c_{n,s} \left(\frac{10 s - 3}{10s}\right) \nu^{-3/(10s - 3)} \lVert \bu^N \rVert_{H^2}^{2 (15s - 3)/(10s - 3)} \,. 
\end{align}
As previously observed, one can extract a subsequence of the Galerkin sequence which converges to the strong solution. The uniqueness in all the considered ranges of $s$ can be proved by standard methods.
\par\smallskip\noindent
Finally, we would like to remark that this result could also have been proved by adding a hyperviscous term $\epsilon A^{5/4} \bu$ to the equations and then passing to a subsequence of strong solutions in the limit $\epsilon \rightarrow 0$, as demonstrated in Theorem \ref{localexistence}.
\end{proof}
\begin{remark}
As already noted before, the critical space is $H^{5/2 - 2 s} (\mathbb{T}^3)$. We observe that it is possible to adapt the proof of local existence of strong solutions to these spaces, as opposed to the integer Sobolev spaces that were used in Theorem \ref{localwellposedness}. However, this is not needed for our purposes.
\end{remark}
\par\smallskip\noindent
Now we state and prove a weak-strong uniqueness result for the fractional Navier-Stokes equations, which again seems to be absent from the literature.
\begin{theorem}
Let $\bu_S$ be a strong solution of the fractional Navier-Stokes equations on $[0,T]$ and let $\bu_W$ be a Leray-Hopf weak solution on the same time interval with the same initial data $\bu_0$. Then $\bu_W \equiv \bu_S$ on $[0,T]$. 
\end{theorem}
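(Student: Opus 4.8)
The plan is to run the classical relative-energy (weak--strong) argument adapted to the fractional dissipation, controlling the $L^2$-norm of the difference $\bw \coloneqq \bu_W - \bu_S$ by a Gronwall inequality. Writing $\langle\cdot,\cdot\rangle$ for the $L^2(\mathbb{T}^3)$ inner product, the target is the relative energy inequality
\[
\shalf \|\bw(t)\|_2^2 + \nu\int_0^t \|A^{s/2}\bw\|_2^2\,d\tau \;\le\; -\int_0^t \I (\bw\cdot\nabla)\bu_S\cdot\bw\,dx\,d\tau ,
\]
valid for a.e.\ $t\in[0,T]$, after which the right-hand side is absorbed into the dissipation and the remaining coercive term, leaving $\bw(0)=0$ to force $\bw\equiv 0$.

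To obtain this inequality I would combine three facts. First, the Leray--Hopf energy inequality \eqref{eninequal} for $\bu_W$ gives $\shalf\|\bu_W(t)\|_2^2 + \nu\int_0^t\|A^{s/2}\bu_W\|_2^2\,d\tau \le \shalf\|\bu_0\|_2^2$. Second, the strong solution is regular enough to be tested against itself, so it satisfies the energy \emph{equality} $\shalf\|\bu_S(t)\|_2^2 + \nu\int_0^t\|A^{s/2}\bu_S\|_2^2\,d\tau = \shalf\|\bu_0\|_2^2$. Third, and most delicately, I would compute the cross term $\tfrac{d}{d\tau}\langle\bu_W,\bu_S\rangle$ by pairing the strong equation for $\bu_S$ with $\bu_W$ (using the identity \eqref{fracparts} on the dissipative term) and by inserting $\bu_S$ as a test function in the weak formulation \eqref{weakformulation} for $\bu_W$; adding the two and integrating in time yields an exact identity for $\langle\bu_W(t),\bu_S(t)\rangle$ in terms of the initial energy, the mixed dissipation $\langle A^{s/2}\bu_W,A^{s/2}\bu_S\rangle$, and two nonlinear pairings. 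Expanding $\shalf\|\bw\|_2^2 = \shalf\|\bu_W\|_2^2 + \shalf\|\bu_S\|_2^2 - \langle\bu_W,\bu_S\rangle$, the $\|\bu_0\|_2^2$ terms cancel, the three dissipation contributions assemble into $-\nu\int_0^t\|A^{s/2}\bw\|_2^2$, and the nonlinear pairings collapse to $-\int_0^t\langle(\bw\cdot\nabla)\bu_S,\bw\rangle\,d\tau$ after using $\nabla\cdot\bw=0$ to discard the pressure and to kill the self-interaction $\langle(\bw\cdot\nabla)\bu_S,\bu_S\rangle = \shalf\int_{\mathbb{T}^3}\bw\cdot\nabla|\bu_S|^2\,dx = 0$.

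The admissibility of $\bu_S$ as a test function is the first technical hurdle: Lemma \ref{generaltestfunctions} only licenses test functions in $W^{1,1}_0[(0,T);L^2]\cap L^1[(0,T);H^3]$, whereas the strong solution of Theorem \ref{localwellposedness} lies merely in $L^2[(0,T);H^{1+s}]$ for $s>\fivesixths$. I would bridge this gap by a spatial mollification, taking $\bu_S^\epsilon = \bu_S * \varphi^\epsilon$ as the test function, deriving the identity at the level $\epsilon>0$, and then letting $\epsilon\to 0$. The Friedrichs commutator errors generated in the nonlinear term vanish in this limit by a DiPerna--Lions argument, using $\bu_S\in L^\infty[(0,T);H^1]$ together with $\bu_W\in L^3[\mathbb{T}^3\times(0,T)]$ (available for $s\ge\threequarters$ by Lemma \ref{pressuregularity}, hence a fortiori in the relevant range), exactly in the spirit of the commutator analysis used in the proof of Theorem \ref{thmenbal}.

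The remaining, and main, obstacle is closing the nonlinear estimate. I would bound $|\langle(\bw\cdot\nabla)\bu_S,\bw\rangle| \le \|\nabla\bu_S\|_{p}\,\|\bw\|_{2p'}^2$ with $\tfrac1p+\tfrac{1}{p'}=1$, and interpolate $\|\bw\|_{2p'}\le \|\bw\|_2^{1-\theta}\|\bw\|_{\dot H^s}^{\theta}$ via the Sobolev embedding $\dot H^s\hookrightarrow L^{6/(3-2s)}$, which forces $\theta = \tfrac{3}{2sp}$. Young's inequality then peels off $\eps\|\bw\|_{\dot H^s}^2$ into the dissipation, leaving the Gronwall weight $\|\nabla\bu_S\|_{p}^{1/(1-\theta)}$, which is integrable in time precisely when $\tfrac{2s}{a}+\tfrac{3}{p}\le 2s$ for the space $L^a[(0,T);L^p]$ containing $\nabla\bu_S$. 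Since the strong solution satisfies $\nabla\bu_S\in L^\infty[(0,T);L^2]\cap L^2[(0,T);L^{6/(3-2s)}]$, the relevant scaling exponent along this interpolation family equals $\threehalves$, so the condition holds exactly when $s\ge\threequarters$, comfortably met in the ranges where the strong solution is constructed in Theorem \ref{localwellposedness} (and trivially met in the smoother cases $s\le\fivesixths$, where $\nabla\bu_S\in L^\infty[\mathbb{T}^3\times(0,T)]$ and no fractional interpolation is needed). Gronwall's inequality together with $\bw(0)=0$ then yields $\bw\equiv 0$, i.e.\ $\bu_W\equiv\bu_S$ on $[0,T]$.
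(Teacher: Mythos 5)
Your proposal is correct and follows essentially the same route as the paper: test the weak formulation with the strong solution, pair the strong equation with the weak solution, combine with the energy inequality for $\bu_W$ and the energy equality for $\bu_S$ to obtain the relative energy inequality for $\bv=\bu_W-\bu_S$, and close via a H\"older--Sobolev--Young estimate on $\int \bv\otimes\bv:\nabla\bu_S$ followed by Gronwall. The only differences are cosmetic: you treat the admissibility of $\bu_S$ as a test function more carefully (via mollification), a point the paper passes over silently, and the paper fixes the specific exponents $\lVert\nabla\bu_S\rVert_{L^{3/s}}\lVert\bv\rVert_{L^{6/(3-2s)}}\lVert\bv\rVert_{L^2}$ with the embedding $H^{5/2-s}\hookrightarrow W^{1,3/s}$ rather than running your general interpolation family, with the same outcome.
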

\begin{proof}
By using $\bu_S$ as a test function in the weak formulation that is obeyed by $\bu_W$, we find that
\begin{align}
&\int_0^T \I \bigg[\bu_W \partial_t \bu_S - \nu (A^{s/2}\bu_W)(A^{s/2} \bu_S)  + \bu_W \otimes\bu_W : \nabla \bu_S  \bigg]\,dxdt\nonumber\\ 
&= - \int_{\mathbb{T}^3} \bu_0^2 \, dx
+ \int_{\mathbb{T}^3} \bu_W (\bx,T) \bu_S (\bx,T) \, dx \,.\label{Aex}
\end{align}
Since the strong solution satisfies the equation in an $L^2$-sense, taking the $L^2 (\mathbb{T}^3)$ inner product with $\bu_W$ yields
\begin{equation}\label{A5}
\int_0^T \int_{\mathbb{T}^3} \bigg[ - \bu_W \partial_t \bu_S - \nu (A^{s/2} \bu_W) (A^{s/2} \bu_S) - \bu_S \otimes \bu_W : \nabla \bu_S \bigg] \, dx dt = 0 \, . 
\end{equation}
Adding these two equations gives that
\begin{align}
&\int_0^T \int_{\mathbb{T}^3} \bigg[ - 2 \nu (A^{s/2} \bu_W) (A^{s/2} \bu_S) - \bu_S \otimes \bu_W : \nabla \bu_S + \bu_W \otimes \bu_W : \nabla \bu_S \bigg] \, dx dt\nonumber \\
&= - \int_{\mathbb{T}^3} |\bu_0|^2 \, dx + \int_{\mathbb{T}^3} \bu_W (\bx,T) \bu_S (\bx,T) \, dx\,.\label{A6}
\end{align}
We now introduce the notation $\bv \coloneqq \bu_W - \bu_S$, which allows to rewrite the equation above as follows
\begin{align}
&\int_0^T \int_{\mathbb{T}^3} \bigg[ - \nu \lvert A^{s/2} \bu_W \rvert^2 - \nu \lvert A^{s/2} \bu_S \rvert^2 + \nu \lvert A^{s/2} \bv \rvert^2 + \bv \otimes \bv : \nabla \bu_S \bigg] \, dx dt\nonumber \\
&= - \int_{\mathbb{T}^3} |\bu_0|^2 \, dx + \frac{1}{2} \int_{\mathbb{T}^3} \bigg[ \lvert \bu_W (\bx,T) \rvert^2 + \lvert \bu_S (\bx,T) \rvert^2 - \lvert \bv (\bx,T) \rvert^2 \bigg] \, dx \,.\label{A7}
\end{align}
We can rearrange this expression as follows
\begin{align}
&\shalf\int_{\mathbb{T}^3} \lvert \bv (\bx,T) \rvert^2 \, dx + \int_0^T \int_{\mathbb{T}^3} \bigg[ \nu \lvert A^{s/2} \bv \rvert^2 + \bv \otimes \bv : \nabla \bu_S \bigg] \, dx dt = \shalf\int_{\mathbb{T}^3} \bigg[ \lvert \bu_W (\bx,T) \rvert^2 - \lvert \bu_0 \rvert^2 \bigg] \, dx\nonumber \\
&+ \nu \int_0^T \int_{\mathbb{T}^3} \lvert A^{s/2} \bu_W \rvert^2 \, dx dt + \shalf\int_{\mathbb{T}^3} \bigg[ \lvert \bu_S (\bx,T) \rvert^2 - \lvert \bu_0 \rvert^2 \bigg] \, dx + \nu \int_0^T \int_{\mathbb{T}^3} \lvert A^{s/2} \bu_S \rvert^2 \, dx dt \leq 0 \,,\label{A8}
\end{align}
where the inequality follows from the energy equality for strong solutions and the energy inequality for Leray-Hopf weak solutions. We then obtain the following estimate
\begin{align}
&\shalf\int_{\mathbb{T}^3} \lvert \bv (\bx,T) \rvert^2 \, dx + \int_0^T \int_{\mathbb{T}^3} \nu \lvert A^{s/2} \bv \rvert^2 \, dx dt \leq -\int_0^T \int_{\mathbb{T}^3} \bv \otimes \bv : \nabla \bu_S  \, dx dt \nonumber\\
&= - \int_0^T \int_{\mathbb{T}^3} \bv \otimes \bv : \nabla \bu_S  \, dx dt \leq \int_0^T \lVert \nabla \bu_S \rVert_{L^{3/s}} \lVert \bv (\cdot, t) \rVert_{L^{6/(3-2s)}} \lVert \bv (\cdot, t) \rVert_{L^2} \, dt\nonumber \\
&\leq \int_0^T \lVert \bu_S \rVert_{H^{5/2 - s}} \lVert \bv (\cdot, t) \rVert_{\dot{H}^s} \lVert \bv (\cdot, t) \rVert_{L^2} \, dt\,.\label{A9}
\end{align}
Then by applying Young's inequality we find that
\begin{equation}\label{A10}
\shalf\int_{\mathbb{T}^3} \lvert \bv (\bx,T) \rvert^2 \, dx + \shalf \nu \int_0^T \int_{\mathbb{T}^3} \lvert A^{s/2} \bv \rvert^2 \, dx dt \leq \shalf \nu^{-1} \int_0^T \lVert \bu_S \rVert_{H^{5/2 - s}}^2 \lVert \bv (\cdot, t) \rVert_{L^2}^2 \, dt \,.
\end{equation}
Since $\bv (\cdot, 0) = 0$, it follows from Gronwall's inequality that $\bv \equiv 0$ on $\mathbb{T}^3 \times [0,T]$.
\end{proof}

%%%%%%%%%%%%%%%%%

\section{\large\color{blue}Summary and concluding remarks}\label{Con}

The different functional properties of solutions of the three-dimensional fractional Navier-Stokes equations have been considered across five ranges of the exponent $s$, which are divided by four significant critical points\,: $s=\third$\,; $s=\threequarters$\,; $s=\fivesixths$ and $s=\fivefourths$. Their existence suggests that solutions undergo a set of phase transitions at these points. Several explanatory remarks are in order.
%\par\smallskip\noindent
\ben

\item In the range $0 < s < \third$, the non-uniqueness of Leray-Hopf solutions has already been demonstrated in \cite{Colombo2018,Rosa2019} using convex integration methods. In addition, Bulut, Huynh and Palasek \cite{Bulut2022} have used these techniques to show the nonuniqueness of weak solutions with epochs of regularity\,; i.e. solutions of which the non-smoothness is limited to a set of bounded Hausdorff dimension. In particular, the result in \cite{Bulut2022} states that there are infinitely many weak solutions of the fractional Navier-Stokes equations for $s < \third$ with regularity $C^0_t C^{s}_x$. These can be chosen to coincide with the local strong solution for a short initial time interval. Our analogue of the Prodi-Serrin regularity criterion (Theorem \ref{BKMthm}) shows that an initially strong solution with control of the $L^\infty_t C^s_x$ norm for $s > \third$ will stay smooth. Therefore a non-uniqueness result of the type in \cite{Bulut2022} cannot be expected to hold for $s > \third$. This suggests that the results from convex integration schemes which construct H\"older continuous solutions are sharp with regard to the value of $s$ ($s<\third$), at least from the epochs of regularity perspective. 
%\par\medskip\noindent
%\par\medskip\noindent
\item What of the point $s=\threequarters$?  We have observed that if $s \geq \threequarters$ then Leray-Hopf solutions satisfy an equation of local energy balance (Theorem \ref{thmenbal}). Moreover, when $s > \threequarters$ there exists a suitable weak solution satisfying a partial regularity result, as proved in \cite{TY2015}. An improvement of the latter result was made in \cite{KO2022}. As noted in \cite[~p. 10]{KO2022}, the origin of the exponent $s = \threequarters$ comes from the requirement that a weak solution be an $L^3 \left[\mathbb{T}^3 \times (0,T) \right]$ function. This regularity is needed as part of the definition of a suitable weak solution, and in particular for the interpretation of the local energy inequality. As mentioned in Remark \ref{remslessthreequarters}, the equation of local energy balance can be established for a Leray-Hopf solution that lies in $L^3 \left[\mathbb{T}^3 \times (0,T) \right]$. Similar to the proof of the partial regularity result in \cite{KO2022}, this regularity is needed to bound the cubic term $\lvert \bu \rvert^2 \bu$ in the local energy balance. This degree of regularity only follows from the Leray-Hopf regularity for $s \geq \threequarters$, as computed in Lemma \ref{pressuregularity}. Both Theorem \ref{thmenbal} together with the partial regularity results from \cite{TY2015,KO2022} have similar regularity requirements, so it is natural that this imposes the same lower bound on $s$. Some further discussion on the connection between the equation of local energy balance and the suitability of a weak solution is provided in \cite[~\S 6.2]{BV2022}.
%\par\medskip\noindent
\item We could argue loosely that in the range $0 \leq s <\third$ the properties of the fractional Navier-Stokes equations correspond more to those of the Euler equations, while in the range $\threequarters \leq s < \fivesixths$ they correspond more to the CKN-type suitable weak solutions of the Navier-Stokes equations \cite{CKN1982,TY2015,CCM2020} which satisfy partial regularity results. In the range $s > \fivesixths$ their behaviour is of the standard Leray-Hopf type associated with $s=1$ Navier-Stokes equations. Full regularity is only reached at $s=\fivefourths$.
%\par\medskip\noindent
\item Finally, we wish to make a clarification with respect to the standard Leray-Hopf results expressed in Theorem \ref{fivesixthsthm} for the case $s > \fivesixths$. For the standard ($s=1$) three-dimensional Navier-Stokes equations, it has been shown in \cite{JDG2018,JDG2020} that there exists an infinite hierarchy of bounded time averages
\bel{con1}
\left< \|\nabla^{n}\bu\|_{2m}^{\alpha_{n,m}}\right>_{T} < \infty\,,
\ee
where the $\alpha_{n,m}$ are defined by
\bel{alphanmdef}
\alpha_{n,m} = \frac{2m}{2m(n+1)-3}
\ee
and where $\left<\cdot\right>_{T}$ is a time average up to time $T$. The $\alpha_{n,m}$ appear as a direct result of the scaling property of the norms under the invariance properties expressed in (\ref{rescal1a})
\bel{con2}
\|\nabla^{n}\bu\|_{2m} = \lambda^{-1/\alpha_{n,m}}\|\nabla^{'n}\bu'\|_{2m}\,.
\ee
The question arises whether the result in (\ref{con1}) is consistent with (\ref{fivesixthsB}), which says that 
\bel{conex}
\bu \in L^{2 \delta_{n,s}} \left[(0,T)\,; H^{n} (\mathbb{T}^3)\right]\,.
\ee
Recall that $\delta_{n,s}$ has been defined in (\ref{deltadef}). To address this question we note that the equivalent of $\alpha_{n,m}$ for the fractional Navier-Stokes equations is 
\bel{alphfracdef}
\alpha_{n,m,s} = \frac{2m}{2m(n+2s-1)-3}\,.
\ee
A straightforward application of interpolation inequalities to the result of Theorem \ref{fivesixthsthm} shows that the equivalent of (\ref{con1}) is 
\bel{con3}
\left< \|\nabla^{n}\bu\|_{2m}^{(6s-5)\alpha_{n,m,s}}\right>_{T} < \infty\,.
\ee
The $6s-5$ is a necessary factor to make (\ref{con3}) at $n=s$ and $m=1$ into $\left<H_{s,1}\right>_{T}$ which, from the energy inequality, is bounded from above. Then we write
\bel{con4} 
\left[(6s-5)\alpha_{n,m,s}\right]_{m=1} = \frac{6s-5}{2n+4s-5} = \delta_{n,s}\,,
\ee
as in (\ref{fivesixthsB}). Thus, we see that Theorem \ref{fivesixthsthm} is closely related to the invariance properties of the fractional Navier-Stokes equations.
\een
\par\medskip\noindent
\textbf{\color{blue}Acknowledgments\,:} The authors would like to thank Edriss Titi (Cambridge), Dario Vincenzi (Universit\'e C\^ote d'Azur) and Samriddhi Sankar Ray (ICTS Bangalore) for discussions. The authors would also like to thank the anonymous referees for their careful reading of the paper and their helpful suggestions and comments. D.W.B. acknowledges support from the Cambridge Trust, the Cantab Capital Institute for Mathematics of Information and the Prince Bernhard Culture fund. The authors would also like to thank the Isaac Newton Institute for support and hospitality during the programme \textit{Mathematical Aspects of Fluid Turbulence\,: where do we stand?} in 2022, when work on this paper was undertaken. It was supported by grant number EP/R014604/1.

%%%%%%%%%%%%%%%%%%
\appendix
%%%%%%%%

\section{\large\color{blue}Appendix\,: Properties of the fractional Laplacian} \label{fraclapappendix}

In this appendix we recall some basic properties of the fractional Laplacian. By using the Fourier representation \eqref{Adef} as well as the Plancherel identity, one can prove the following identity (for $f, g \in H^{2s} (\mathbb{T}^3)$
\begin{equation} \label{integbyparts}
\int_{\mathbb{T}^3} (A^s f) g \, dx = \int_{\mathbb{T}^3} f (A^s g) \, dx. 
\end{equation}
We also observe that for any $s \in \mathbb{R}$ and $f \in H^s (\mathbb{T}^3)$ it holds that
\begin{equation}\label{B1}
\lVert f \rVert_{\dot{H}^s} = \lVert A^s f \rVert_{2},
\end{equation}
which can be easily seen from the Fourier representation. In the case $p \neq 2$, we have to rely on Littlewood-Paley theory (see \cite{Bah2011} for more details). 
\par\smallskip\noindent
First we introduce a dyadic partition of unity $\{ \rho_j \}_{j=1}^\infty$ which is given by
\begin{equation}\label{B2}
\rho_0 (x) = \rho (x), \quad \rho_j (x) = \rho (2^{-j} x) \quad\text{for}\quad j=1,2,\ldots\,,
\end{equation}
with $\rho_{-1} (x) = 1 - \sum_{j=0}^\infty \rho_j (x)$.  Then for $f \in \mathcal{S}'(\mathbb{T}^3)$ we can define the Littlewood-Paley blocks as follows (for $\xi \in \mathbb{Z}^3$)
\begin{equation}\label{B3}
\widehat{\Delta_j f} (\xi) = \rho_j (\xi) \widehat{f} (\xi), \quad j=-1,0, \ldots.
\end{equation}
Then for $q < \infty$ we introduce the Besov norm as follows
\begin{equation}\label{B4}
\lVert f \rVert_{B^{s}_{p,q}} \coloneqq \lVert \Delta_{-1} f \rVert_{L^p} + \bigg( \sum_{j=0}^\infty 2^{s j q} \lVert \Delta_j f \rVert_{L^p}^q \bigg)^{1/q},
\end{equation}
and if $q = \infty$ the norm is given by
\begin{equation}\label{B5}
    \lVert f \rVert_{B^s_{p,\infty}} \coloneqq \lVert \Delta_{-1} f \rVert_{L^p} + \sup_{j \geq 0} \big( 2^{s j } \lVert \Delta_j f \rVert_{L^p} \big).
\end{equation}
In \cite[~Equation A.3]{Tao2006} the following inequality is stated (where $1 \leq p \leq \infty$, $j \geq 0$ and $s \in \mathbb{R}$)
\begin{equation} \label{littlewoodpaley}
\lVert \Delta_j A^s f \rVert_{L^p} \sim 2^{js} \lVert \Delta_j f \rVert_{L^p}.
\end{equation}
Therefore if $\int_{\mathbb{T}^3} f \, dx = 0$, we know that $\Delta_{-1} f = 0$ (by a suitable choice of a dyadic partition of unity). This means that for mean-free functions $f \in B^t_{p,q} (\mathbb{T}^3)$ by estimate \eqref{littlewoodpaley} it follows that (for $1 \leq p , q \leq \infty$ and $s,t \in \mathbb{R}$)
\begin{equation} \label{fraclapbesov}
\lVert A^s f \rVert_{B^{t-s}_{p,q}} \sim \lVert f \rVert_{B^t_{p,q}}.
\end{equation}
Now we recall that $W^{s,p} (\mathbb{T}^3) = B^s_{p,p} (\mathbb{T}^3)$ (see \cite[~Equation 3.5]{Amann2000}) for $s \in \mathbb{R} \backslash \mathbb{Z}$ and $p \in [1,\infty]$, therefore the estimate \eqref{fraclapbesov} also holds for (fractional) Sobolev spaces if $t-s, s \notin \mathbb{Z}$. 
\par\smallskip\noindent
Finally, we state a few inequalities from para-differential calculus (the full details of which can be found in \cite{Bah2011}). Let $1 \leq p, p_1, p_2 , q, q_1, q_2 \leq \infty$ and $\alpha > 0 > \beta$ such that
\begin{equation*}
\frac{1}{p} = \frac{1}{p_1} + \frac{1}{p_2}. 
\end{equation*}
Then the following inequalities hold:
\begin{itemize}
    \item If $\alpha + \beta = 0$, $1 = \frac{1}{q_1} + \frac{1}{q_2}$, $f \in B^{\alpha}_{p_1,q_1} (\mathbb{T}^3)$ and $g \in B^{\beta}_{p_2,q_2} (\mathbb{T}^3)$, then
    \begin{equation}
    \| f g \|_{B^{\beta}_{p,q_2}} \lesssim \| f \|_{B^{\alpha}_{p_1,q_1}} \| g \|_{B^{\beta}_{p_2,q_2}}.
    \end{equation}
    \item If $f \in B^{\alpha}_{p_1,q} (\mathbb{T}^3)$ and $g \in B^{\alpha}_{p_2,q} (\mathbb{T}^3)$, then
    \begin{equation}
    \| f g \|_{B^{\alpha}_{p,q}} \lesssim \| f \|_{B^{\alpha}_{p_1,q}} \| g \|_{B^{\alpha}_{p_2,q}}.
    \end{equation}
\end{itemize}

\section{\large\color{blue}Appendix\,: A commutator estimate for the fractional Laplacian} \label{commutatorappendix}
In this appendix, we will prove the following commutator estimate for a general vector field $\bu$ with Sobolev regularity. This commutator estimate is used in the proof of the energy estimates that are needed to establish the hierarchy of weak solution time averages.
\begin{lemma} \label{commutatorlemma}
Let $1 < s_1$ and $1 \leq s_2 < \fivehalves < s_3$, and let $0 < \theta < 1$ be such that $\theta s_2 + (1 - \theta) s_3 = \fivehalves$. Moreover, let $\bu \in H^{s_1} (\mathbb{T}^3) \cap H^{s_3} (\mathbb{T}^3)$ be mean-free, then the following commutator estimate holds
\begin{equation}
\left\lVert A^{s_1/2} \left[ (\bu \cdot \nabla) \bu \right] - (\bu \cdot \nabla ) \left[ A^{s_1/2} \bu \right] \right\lVert_{L^2} \lesssim \lVert \bu \rVert_{H^{s_2}}^{\theta} \lVert \bu \rVert_{H^{s_3}}^{1-\theta} \lVert \bu \rVert_{H^{s_1}}.
\end{equation}
\end{lemma}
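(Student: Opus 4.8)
The plan is to prove the estimate using the Littlewood-Paley decomposition and the paraproduct calculus recalled in Appendix \ref{fraclapappendix}. The key observation is that the commutator $A^{s_1/2}[(\bu\cdot\nabla)\bu] - (\bu\cdot\nabla)[A^{s_1/2}\bu]$ measures the failure of the fractional Laplacian to commute with the transport operator $\bu\cdot\nabla$. Writing the nonlinear term componentwise, for each fixed $j$ the quantity we must control is $A^{s_1/2}(u_j\,\partial_j u) - u_j\,\partial_j(A^{s_1/2}u)$, which is a Kato-Ponce type commutator $[A^{s_1/2},u_j]\partial_j u$. The standard route is to split the product $u_j\,\partial_j u$ using Bony's paraproduct decomposition into $T_{u_j}\partial_j u + T_{\partial_j u}u_j + R(u_j,\partial_j u)$, and to do the same for the term where $A^{s_1/2}$ has been moved inside. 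When one subtracts, the leading paraproduct $T_{u_j}$ (low frequencies of $u_j$ multiplying high frequencies of $\partial_j u$) has its most singular part cancelled, leaving a commutator on which one gains one derivative; this is exactly the mechanism that makes the estimate close.

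First I would reduce to a Littlewood-Paley block estimate. For each dyadic block $\Delta_k$, one writes the commutator acting on $\Delta_k u$ and estimates $\|A^{s_1/2}(u_j\partial_j u) - u_j\partial_j A^{s_1/2}u\|_{L^2}$ by summing over frequency interactions. Using \eqref{littlewoodpaley} to convert $A^{s_1/2}$ into the factor $2^{ks_1}$ on the $k$-th block, the key gain is a commutator bound of the form
\begin{equation}
\big\| [A^{s_1/2}, u_j]\,\partial_j u \big\|_{L^2} \lesssim \|\nabla u\|_{L^\infty}\,\|u\|_{\dot H^{s_1}} + \|u\|_{\dot H^{s_1}}\,\|\nabla u\|_{L^\infty}\,,
\end{equation}
i.e. the highest derivative never lands simultaneously on both factors. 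Concretely one distributes the derivatives so that either $\bu$ carries $s_1$ fractional derivatives and the companion factor is controlled in $L^\infty$ through $\|\nabla\bu\|_{L^\infty}$, or vice versa. The term $\|\nabla\bu\|_{L^\infty}$ must then be absorbed: since $s_2<\fivehalves<s_3$ bracket the Sobolev-embedding threshold $\tfrac52$ for $\dot H^s\hookrightarrow W^{1,\infty}$ in three dimensions, Sobolev embedding combined with the interpolation $\|\bu\|_{\dot H^{5/2}}\le \|\bu\|_{H^{s_2}}^{\theta}\|\bu\|_{H^{s_3}}^{1-\theta}$ (valid precisely because $\theta s_2+(1-\theta)s_3=\fivehalves$) gives
\begin{equation}
\|\nabla\bu\|_{L^\infty}\lesssim \|\bu\|_{\dot H^{5/2}}\lesssim \|\bu\|_{H^{s_2}}^{\theta}\|\bu\|_{H^{s_3}}^{1-\theta}\,.
\end{equation}
Multiplying by the remaining factor $\|\bu\|_{H^{s_1}}$ then yields the claimed bound.

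I expect the main obstacle to be the careful bookkeeping of the paraproduct cancellation — verifying that after subtracting the two paraproduct decompositions the surviving terms genuinely gain a derivative, so that no term requires two derivatives on $\bu$ at top frequency simultaneously. In particular the resonant (diagonal) term $R(u_j,\partial_j u)$ and the symmetric paraproduct $T_{\partial_j u}u_j$ must be handled without loss, and one must check that the summation over dyadic shells converges; this is where the hypotheses $1<s_1$ (so that the commutator is genuinely of positive order and the low-frequency block $\Delta_{-1}$ is harmless on mean-free $\bu$) and the strict ordering of $s_2,s_3$ around $\fivehalves$ enter. Once the frequency-localised commutator bound is in hand, the embedding and interpolation step is routine.
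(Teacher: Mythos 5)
Your proposal is correct in outline but takes a genuinely different route from the paper. The paper does not use Bony's paraproduct decomposition at all: it works directly on the Fourier side, writing the Fourier coefficients of the commutator as a convolution weighted by $|k|^{s_1}-|k-m|^{s_1}$, splitting into the regimes $|m|<|k|/2$ and $|m|\ge|k|/2$, and invoking the pointwise bounds $\bigl||k|^{s_1}-|k-m|^{s_1}\bigr|\lesssim |k-m|^{s_1-1}|m|$ and $\lesssim |m|^{s_1}$ respectively, taken from Fefferman--McCormick--Robinson--Rodrigo. Both regimes are then dominated by the single convolution $(|\cdot|\,|\widehat{\bu}|)*(|\cdot|^{s_1}|\widehat{\bu}|)$, and Young's convolution inequality ($\ell^1*\ell^2\to\ell^2$) gives the bound $\bigl(\sum_k|k|\,|\widehat{\bu}(k)|\bigr)\lVert\bu\rVert_{\dot H^{s_1}}$, with the first factor controlled by $\lVert\bu\rVert_{H^{s_2}}^{\theta}\lVert\bu\rVert_{H^{s_3}}^{1-\theta}$ via the Agmon-splitting trick. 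Your Kato--Ponce/paraproduct route arrives at the same intermediate bound $\lVert[A^{s_1/2},u_j]\partial_j u\rVert_{L^2}\lesssim\lVert\nabla\bu\rVert_{L^\infty}\lVert\bu\rVert_{\dot H^{s_1}}$ and would also close; the paper's convolution argument is more elementary and self-contained on the torus, while yours imports heavier machinery but adapts more readily to $L^p$-based estimates.

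One step as written is false and needs repair: the endpoint embedding $\dot H^{5/2}(\mathbb{T}^3)\hookrightarrow W^{1,\infty}$ does not hold (just as $H^{3/2}\not\hookrightarrow L^\infty$ in three dimensions), so the chain $\lVert\nabla\bu\rVert_{L^\infty}\lesssim\lVert\bu\rVert_{\dot H^{5/2}}\lesssim\lVert\bu\rVert_{H^{s_2}}^{\theta}\lVert\bu\rVert_{H^{s_3}}^{1-\theta}$ breaks at the first inequality. The fix is precisely what the strict inequalities $s_2<\fivehalves<s_3$ are there for: interpolate directly into $B^{5/2}_{2,1}$, or equivalently bound $\sum_k|k|\,|\widehat{\bu}(k)|$ by splitting the dyadic sum at a frequency determined by the ratio of the two norms and applying Cauchy--Schwarz on each piece (the proof of Agmon's inequality, which is what the paper cites). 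This yields $\lVert\nabla\bu\rVert_{L^\infty}\le\sum_k|k|\,|\widehat{\bu}(k)|\lesssim\lVert\bu\rVert_{H^{s_2}}^{\theta}\lVert\bu\rVert_{H^{s_3}}^{1-\theta}$ without ever passing through $\dot H^{5/2}$. With that substitution your argument is sound.
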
\noindent
Commutator estimates of this type were first established in the inhomogeneous case in \cite{Kato1988} and in the homogeneous case in \cite{Kenig1993}. Lemma \ref{commutatorlemma} is based on the adaptation of a result in (and the method from) \cite[~Theorem 1.2]{Fefferman2014}. 
\begin{proof}[Proof of Lemma \ref{commutatorlemma}]
We first take the Fourier transform of $A^{s_1/2} \left[ (\bu \cdot \nabla) \bu \right] - (\bu \cdot \nabla ) \left[ A^{s_1/2} \bu \right]$, which is given by (for $k \in \mathbb{Z}^3$)
\begin{align*}
&\mathcal{F} \left[ A^{s_1/2} \left[ (\bu \cdot \nabla) \bu \right] - (\bu \cdot \nabla ) \left[ A^{s_1/2} \bu \right] \right] (k) = \sum_{m \in \mathbb{Z}^3} \sum_{i=1}^3 \bigg[ (\lvert k \rvert^{s_1} - \lvert k - m \rvert^{s_1} ) \widehat{\bu}_i (m) (k - m)_i \widehat{\bu} (k-m)  \bigg].
\end{align*}
By using the Parseval identity, to prove the estimate it is sufficient to estimate this quantity in $l^2 (\mathbb{Z}^3)$. In order to prove the result, we separately consider the cases $\lvert m \rvert < \frac{\lvert k \rvert}{2}$ and $\lvert m \rvert \geq \frac{\lvert k \rvert}{2}$. If $\lvert m \rvert < \frac{\lvert k \rvert}{2}$, the following inequality holds (which was established in \cite{Fefferman2014})
\begin{equation} \label{fouriercoeffineq1}
\left\lvert \lvert k \rvert^{s_1} - \lvert k - m \rvert^{s_1} \right\rvert \leq c \lvert k - m \rvert^{s_1 - 1} \lvert m \rvert.
\end{equation}
Note that in \cite{Fefferman2014} they prove this inequality for $k, m \in \mathbb{R}^3$ (as they consider the commutator estimate when the domain is the whole space), and therefore it also holds in our case. By using inequality \eqref{fouriercoeffineq1} we find that
\begin{align}
&\sum_{m \in \mathbb{Z}^3, \lvert m \rvert < \frac{\lvert k \rvert}{2}} \sum_{i=1}^3 \bigg[ (\lvert k \rvert^{s_1} - \lvert k - m \rvert^{s_1} ) \widehat{\bu}_i (m) (k - m)_i \widehat{\bu} (k-m)  \bigg] \nonumber \\
&\leq c \sum_{m \in \mathbb{Z}^3} \bigg[ \lvert k - m \rvert^{s_1 } \lvert m \rvert \lvert \widehat{\bu} (m) \rvert \lvert \widehat{\bu} (k-m) \rvert  \bigg] = c ( \lvert \cdot \rvert \lvert \widehat{\bu} \rvert ) * (\lvert \cdot \rvert^{s_1} \lvert \widehat{\bu} \rvert ). \label{smallmestimate}
\end{align}
To handle the case $\lvert m \rvert \geq \frac{\lvert k \rvert}{2}$, we recall the following inequality from \cite{Fefferman2014} (which holds if $\lvert m \rvert \geq \frac{\lvert k \rvert}{2}$)
\begin{equation} \label{fouriercoeffineq2}
\left\lvert \lvert k \rvert^{s_1} - \lvert k - m \rvert^{s_1} \right\rvert \leq c \lvert m \rvert^{s_1}.
\end{equation}
Using estimate \eqref{fouriercoeffineq2} then gives
\begin{align}
&\sum_{m \in \mathbb{Z}^3, \lvert m \rvert \geq \frac{\lvert k \rvert}{2}} \sum_{i=1}^3 \bigg[ (\lvert k \rvert^{s_1} - \lvert k - m \rvert^{s_1} ) \widehat{\bu}_i (m) (k - m)_i \widehat{\bu} (k-m)  \bigg] \nonumber \\
&\leq c \sum_{m \in \mathbb{Z}^3} \bigg[ \lvert k - m \rvert \lvert m \rvert^{s_1 } \lvert \widehat{\bu} (m) \rvert \lvert \widehat{\bu} (k-m) \rvert  \bigg] = c ( \lvert \cdot \rvert \lvert \widehat{\bu} \rvert ) * (\lvert \cdot \rvert^{s_1} \lvert \widehat{\bu} \rvert ). \label{largemestimate}
\end{align}
Then by using inequalities \eqref{smallmestimate} and \eqref{largemestimate} as well as the Young convolution inequality, we find
\begin{align*}
&\left\lVert A^{s_1/2} \left[ (\bu \cdot \nabla) \bu \right] - (\bu \cdot \nabla ) \left[ A^{s_1/2} \bu \right] \right\lVert_{L^2}^2 \leq c \sum_{k \in \mathbb{Z}^3} \left\lvert ( \lvert \cdot \rvert \lvert \widehat{\bu} \rvert ) * (\lvert \cdot \rvert^{s_1} \lvert \widehat{\bu} \rvert ) (k) \right\rvert^2  \\
&\leq c \left(\sum_{k \in \mathbb{Z}^3} \lvert \widehat{\nabla \bu} \rvert  (k) \right)^2 \sum_{k \in \mathbb{Z}^3}  \lvert k \rvert^{2 s_1} \lvert \widehat{\bu} (k) \rvert^2 .
\end{align*}
Then by following the steps of the proof of Agmon's inequality in \cite[~p. 415-416]{RRS} (see also for example \cite{Liflyand2012}), one finds that
\begin{equation*}
\sum_{k \in \mathbb{Z}^3} \lvert \widehat{\nabla \bu} \rvert  (k) \leq \lVert \bu \rVert_{H^{s_2}}^{\theta} \lVert \bu \rVert_{H^{s_3}}^{1-\theta}.
\end{equation*}
Combining these inequalities then allows one to conclude that
\begin{align*}
&\left\lVert A^{s_1/2} \left[ (\bu \cdot \nabla) \bu \right] - (\bu \cdot \nabla ) \left[ A^{s_1/2} \bu \right] \right\lVert_{L^2}^2 \leq c \lVert \bu \rVert_{H^{s_2}}^{2 \theta} \lVert \bu \rVert_{H^{s_3}}^{2 - 2 \theta} \lVert \bu \rVert_{H^{s_1}}^2.
\end{align*}
\end{proof}
\begin{remark}
The restriction $s_1 > 1$ is not significant. The case $0 < s_1 < 1$ was considered in \cite{Kenig1993}. The case $s_1 = 1$ follows from a trivial adaption of the proof given above as we have
\begin{equation*}
\lvert \lvert k \rvert - \lvert k - m \rvert \rvert \leq \lvert m \rvert,
\end{equation*}
by the reverse triangle inequality.
\end{remark}
\begin{remark}
We note that in \cite[~Remark 1.14]{Bourgain2014} the following estimate was proven (for $s_1 > 0$ and $1 < p < \infty$)
\begin{equation}
\left\lVert A^{s_1/2} (f g) - f  A^{s_1/2} g \right\lVert_{L^p} \lesssim \lVert \nabla f \rVert_{L^\infty} \lVert A^{(s_1-1)/2} g \rVert_{L^p} + \lVert g \rVert_{L^\infty} \lVert A^{s_1/2} f \rVert_{L^p} + \lVert f \rVert_{\dot{B}^1_{\infty,1}} \lVert g \rVert_{\dot{B}^{s_1-1}_{p,\infty}}.
\end{equation}
However, in this paper we will only use the estimate from Lemma \ref{commutatorlemma}. 
\end{remark}

%%%%%%%%%%%% Refs %%%%%%%%%%%%

\bibliographystyle{unsrt}
{\small

}
\end{document}